\newtheorem{proposition}{Proposition}
\newtheorem*{proposition*}{Proposition}
\newtheorem{theorem}{Theorem}
\newtheorem*{theorem*}{Theorem}
\newtheorem{lemma}{Lemma}
\newtheorem*{lemma*}{Lemma}
\newtheorem{corollary}{Corollary}
\newtheorem*{corollary*}{Corollary}
\newtheorem{remark}{Remark}
\newtheorem*{remark*}{Remark}
\newtheorem{definition}{Definition}
\newtheorem*{definition*}{Definition}
\newtheorem{assumption}{Assumption}
\newtheorem*{assumption*}{Assumption}
\newcommand{\supp}{\operatorname{supp}}
\renewcommand{\setminus}{\smallsetminus}
\def\clap#1{\hbox to 0pt{\hss#1\hss}}
\newcommand{\tostar}{\overset{*}{\lower0.5em\hbox{$\smash{\scriptscriptstyle\rightharpoonup}$}}}
\DeclareMathOperator*{\argmin}{arg\,min}
\DeclareMathOperator*{\argmax}{arg\,max}
\newcolumntype{M}[1]{>{\centering\arraybackslash}m{#1}}
\begin{document}

\title{Optimal finite element error estimates for an optimal control problem governed by the wave equation with controls of bounded variation.}
\author{S. Engel
\footnote{
Technical University of Munich, Chair for Optimal Control, Department of Mathematics,
Boltzmannstr.~3, 85748 Garching b. M\"unchen, Germany,
(\url{ 	sebastian.engel@ma.tum.de}).
Funding: The first author's work was supported by the International Research Training Group IGDK, funded by the German Science Foundation (DFG) and the Austrian Science Fund (FWF)},
P. Trautmann
\footnote{
University of Graz, Institute of Mathematics and Scientific Computing,
Heinrichstraße 36, 8010 Graz, Austria
(\url{philip.trautmann@uni-graz.at})},
B. Vexler
\footnote{
Technical University of Munich, Chair for Optimal Control, Department of Mathematics,
Boltzmannstr.~3, 85748 Garching b. M\"unchen, Germany,
(\url{vexler@ma.tum.de})},
}
\date{\today}
\maketitle

\textbf{AMS subject classifications: 26A45, 49J20, 49M25, 65N15, 65N30.}\\

\begin{abstract}
{This work discusses the finite element discretization of an optimal control problem for the linear wave equation with time-dependent controls of bounded variation. The main focus lies on the convergence analysis of the discretization method. The state equation is discretized by a space-time finite element method. The controls are not discretized. Under suitable assumptions optimal convergence rates for the error in the state  and control variable are proven. Based on a conditional gradient method the solution of the semi-discretized optimal control problem is computed. The theoretical convergence rates are confirmed in a numerical example.}
{BV-Functions; optimal control of a wave equation; error bounds; finite elements.}
\end{abstract}



\section{Introduction}
In this paper we derive a priori error estimates for a finite element discretization of the following optimal control problem governed by the linear wave equation:
\begin{align*}
(P)\left\lbrace \begin{matrix}
& \min_{u\in BV(0,T)^m} \frac{1}{2}\| y_u-y_d \|_{L^2(\Omega_T)}^2 + \sum\nolimits_{j=1}^m \alpha_j \|D_tu_j\|_{M(I)}=:J(y,u)\\ \\
\text{subject to } & (\mathcal{W}) \left\lbrace\begin{matrix}
\partial_{tt} y - \Delta y = f = \sum\nolimits_{j=1}^m u_jg_j  &\text{ in }&I \times \Omega \\
y = 0  &\text{ on }&I\times \partial \Omega \\
(y,\partial_t y) =   (y_0 , y_1)  &\text{ in }& \lbrace 0 \rbrace \times \Omega,
\end{matrix}\right.
\end{matrix}\right.
\end{align*}
where $\Omega \subset \mathbb{R}^n$, with $n\in \lbrace 1,2,3\rbrace$, is a convex, polygonal/polyhedral bounded domain. For $T\in (0,\infty)$ we denote $I=(0,T)$. The desired state $y_d$ is assumed to satisfy $y_d\in C^1(\overline I;L^2(\Omega))$. The time depending controls $u$ are given by $u=(u_1,\cdots,u_m)\in BV(0,T)^m$, and $BV(0,T)^m$ is endowed with the norm $\|u\|_{BV(I)^m}=\sum\nolimits_{j=1}^m (\|u_j\|_{L^1(I)^m}+\|D_tu_j\|_{M(I)})$. Here $M(I)$ is the space of Borel measures, endowed with the total variation norm $\|\cdot\|_{M(I)}$. Further, let $(g_j)_{j=1}^m \subset L^\infty(\Omega)\setminus \lbrace 0 \rbrace$ with pairwise disjoint supports and $\alpha_j>0$. The initial data is chosen as $(y_0,y_1)\in H^1_0(\Omega)\times L^2(\Omega)$. Finally, we set $\Omega_T:=I\times \Omega$.

In this work we focus on controls of bounded variation in time.
By using the total variation norm in $(P)$, sparsity in the derivative of the controls is promoted, resulting in locally constant controls. This is in particular the case if the derivative of the optimal control is a linear combination of Dirac functions. Optimal control problems with $BV$-controls are already analyzed for elliptic and parabolic state equations in \cite{[CK17E],[KCK],[HMNV19],kunisch1,kunisch2,kunisch3}.

Since our article deals with a priori error estimates of a finite element discretization for the control problem $(P)$, we briefly discuss previous works on error estimates for PDE control problems with $BV$-controls. In \cite{[KCK]} the authors discretize the time-dependent $BV-$controls by cellwise constant functions. 
The state equation is discretized by piecewise constant finite elements in time and linear continuous finite elements in space. Based on this discretization approach, the authors show that the optimal value of the cost functional and the states converge with an order of $\sqrt{\tau}$ in time and linear in space. However, numerical experiments in \cite{[KCK]} indicate better results. In \cite{[HMNV19]} the authors analyze a finite element discretization of an elliptic control problem with $BV$-controls in a one dimensional setting. As in our case the controls are not discretized.
The main contribution of this work is the derivation of optimal error estimates for the control variable in the $L^1$-norm. Their analysis relies on the one dimensional setting and on structural assumption on the optimal adjoint state which guarantee that the optimal control is piecewise constant and has finitely many jumps. In our work we derive similar optimal error estimates also for the problem with a multi-dimensional wave equation and our analysis relies partially on techniques developed in the former work.

Next we briefly address the difficulties in the derivation of finite element error estimates for optimal control problems with PDEs and $BV$-controls. Standard techniques for the derivation of finite element error estimates, see e.g. \cite{Casas2012}, cannot be applied due to the non-smoothness of the cost functional and the non-reflexivity of $BV(I)$. In the last years several papers concerning the derivation of finite element error estimates for optimal control problems with measure-valued controls appeared, see e.g. \cite{[VexPie],[TVZ18]}. Using the fact that for one dimensional controls, $BV(I)$ is isomorphic to $M(I)\times \mathbb R$, some techniques from these works are used to derive error estimates for $BV$-controls.
Finally, we mention that the literature on finite element error estimates for optimal control problems governed by the wave equation is very limited. To our knowledge the only existing work in this context is \cite{[TVZ18]} which uses the space-time finite element discretization developed and analyzed in \cite{[Zlot]}. Our work also relies on this discretization method for the state equation and its error analysis.\\
The main contribution of this work is the derivation of an optimal error estimate of the control variable in the $L^1(I)$-norm and of the state variable in the $L^2(\Omega_T)$-norm. The state equation is discretized by a space-time finite element method with piecewise linear and continuous Ansatz- and test-functions from \cite{[Zlot]}. The weak formulation of the discrete state equation is augmented with a stabilization term involving the stabilization parameter $\sigma$. Stability of the method depends on the value of this parameter. Moreover, for certain values of this parameter the method is equivalent to wellknown time stepping schemes, like the Crank-Nicolson scheme or the Leap-Frog scheme. The $BV$-controls are not discretized. Due to fact that the controls are only time-dependent the problem under consideration can be reformulated as a measure-valued control problem. Based on the optimality conditions of the continuous and discrete optimal control problem the error in the state variable in the $L^2(\Omega_T)$-norm can be represented in terms of the finite element error of the state and adjoint state equation in the $L^2(\Omega_T)$-norm resp. the $L^\infty(I;L^2(\Omega))$ as well as the error in the control variable in the $L^1(I)$-norm. The convergence rates for the finite element error of the state and adjoint state are obtained from \cite{[Zlot]}. Under the assumption that the continuous and time depending functions
\[
\bar p_{1,i}:t\mapsto-\int\nolimits^T_t\int\nolimits_\Omega \overline pg_i ~dx~ ds, \text{ for }i=1,\cdots,m,
\]
where $\overline p$ is the optimal adjoint state, $\bar p_{1,i}$ is bounded by $\pm \alpha_i$, is equal to $\alpha_i$ at finitely many points in $I$, and the second derivatives of $\bar p_{1,i}$ do not vanish in these points (see (A1) and (A2)), it follows that the continuous optimal control is piecewise constant and has finitely many jumps. To obtain this information about the form of the optimal BV control, using $\bar p_{1,i}$, is particularly elementary because we consider controls in one dimension. Furthermore, it is proven that the solution of the discrete problem has the same number of jumps which are located close to the jumps of the continuous optimal control. Using these properties the error of the optimal control in the $L^1(I)$-norm is estimated in terms of the error of the state variable in the $L^2(\Omega_T)$-norm. Using a bootstrapping argument optimal rates for the error in the state and control variable as well as for the optimal value of the cost are proven. These rates are confirmed by a numerical example with known solution.\\
This work has the following structure. Section \ref{sec:wave} summarizes several needed results on the regularity of weak solutions of the wave equation. In section \ref{sec:approxwave} the space-time finite element method from \cite{[Zlot]} is presented. Moreover, important stability results as well as a priori error estimates are stated. Section \ref{sec:controlprob} deals with the reformulation of the $BV$-control problem as a measure-valued control problem and with the analysis of this problem. In particular, first order optimality conditions are derived. The next section \ref{sec:varprob} is concerned with discretization of the control problem. It is based on the mentioned space-time finite element method and the variational discretization concept. In section \ref{sec:errorest} the error estimates for the optimal state and control variable as well as the optimal functional value are derived. Finally, in section \ref{sec:numerics} a generalized conditional gradient method is introduced which applicable in the context of controls which are not discretized. Based on this method a problem with known solution is solved and the theoretical error estimates are confirmed.

\section{Preliminaries on the Wave Equation}\label{sec:wave}
We consider $\Omega\subset \mathbb R^{d}$, $d=1,2,3$ as convex, polygonal/polyhedral domain. Let $\lbrace \lambda_k \rbrace_{k\in\mathbb{N}}$ be the non-decreasing eigenvalues of the Laplace operator $-\Delta$ with homogeneous boundary conditions and let $\lbrace \mu_k \rbrace_{k\in \mathbb{N}}$ be the corresponding system of eigenfunctions, which are orthonormal complete in $L^2(\Omega)$, and orthogonal complete in $H^1_0(\Omega)$. Hence, let us introduce for $\alpha\geq 0$
the Hilbert spaces
\begin{equation*}
\mathbb{H}^\alpha=\left\lbrace w\in L^2(\Omega) \left\vert \|w\|_{\mathbb H^{\alpha}}^2:=\sum\nolimits_{k\geq 1} \left(\lambda^{(k)} \right)^\alpha \left\langle w,\mu_k\right\rangle_{L^2(\Omega)}^2<\infty
\right. \right\rbrace.
\end{equation*}
For $\alpha=0,1$ we get $L^2(\Omega)$ respectively $H^1_0(\Omega)$. The convexity of $\Omega$ implies that $\mathbb{H}^2= H^2(\Omega)\cap H^1_0(\Omega)$. In general holds $\mathbb{H}^\alpha \hookrightarrow \mathbb{H}^\beta$ for $\alpha \geq \beta$. We denote the dual space of $\mathbb H^\alpha$ by $\mathbb H^{-\alpha}$.
Next we introduce the weak solution of the wave equation with the forcing function $f$, initial displacement $y_0$, and initial velocity $y_1$.
\begin{definition}(\cite{[La68]}, Chap.IV, Sec.4)\\
Let $(f,y_0,y_1)\in L^1(I;L^2(\Omega))\times H^1_0(\Omega)\times L^2(\Omega)$. We call a function $y\in C(\overline I;H^1_0(\Omega))$ with $\partial_t y \in C(\overline{I};L^2(\Omega))$ a weak solution of $(\mathcal{W})$ , if
\begin{equation}\label{weakSoluVariForm}
	\int_0^T-(\partial_t  y ,\partial_t \eta)_{L^2(\Omega)}+(\nabla y,\nabla\eta )_{L^2(\Omega)}~dt=(y_1,\eta(0) )_{L^2(\Omega)}+\int_0^T(f,\eta)_{L^2(\Omega)}~dt
\end{equation}
for any $\eta\in L^1(I;H^1_0(\Omega))$ such that $\partial_t \eta \in L^1(I;L^2(\Omega))$, $\eta\vert_{t=T}=0$, and $y$ satisfies the initial condition $y\vert_{t=0}=y_0$. \label{weakSoluti}
\end{definition}
For the following existence and regularity results of weak solutions of the wave equation we refer to \cite[Proposition 1.1., 1.3.]{[Zlot]}:
\begin{theorem}\label{energy}
For each $(f,y_0,y_1)\in L^1(0,T;L^2(\Omega))\times H^1_0(\Omega) \times L^2(\Omega)$ there exists a unique weak solution $y$ of $(\mathcal{W})$. Moreover, there exists a constant $c>0$ such that the weak solution $y$ satisfies
\begin{equation}\label{energy2}
\|y\|_{C(\overline{I};\mathbb H^{\alpha+1})} + \|\partial_t y\|_{C(\overline{I};\mathbb H^\alpha)}+\|\partial_{tt}y\|_{L^\kappa(I;\mathbb H^{\alpha-1})} \leq c\left(\|y_0\|_{\mathbb H^{\alpha+1}}  +\|y_1\|_{\mathbb H^{\alpha}}+ \|f\|_{L^\kappa(I;\mathbb H^{\alpha})}\right)
\end{equation}
provided $(f,y_0,y_1)\in L^\kappa(I;\mathbb H^{\alpha}) \times \mathbb H^{\alpha+1} \times \mathbb H^{\alpha}$ and
\begin{equation}\label{energy3}
\|y\|_{C(\overline{I};\mathbb H^{\alpha +2})} +\|\partial_t y\|_{C(\overline{I};\mathbb H^{\alpha+1})} + \|\partial_{tt} y\|_{C(\overline{I};\mathbb H^\alpha)}\\ \\
\leq c\left(\|y_0\|_{\mathbb H^{\alpha +2}}+\|y_1\|_{\mathbb H^{\alpha+1}}+\|f\|_{W^{1,1}(I;\mathbb H^{\alpha})}\right).
\end{equation}
provided $(f,y_0,y_1)\in W^{1,1}(I;\mathbb H^{\alpha}) \times \mathbb H^{\alpha+2} \times \mathbb H^{\alpha+1}$ with $0\leq \alpha$, $1\leq \kappa\leq\infty$.
\end{theorem}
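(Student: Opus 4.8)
The plan is to exploit the spectral framework already set up via the eigenpairs $(\lambda_k,\mu_k)$ and reduce the PDE to a family of decoupled scalar ODEs. Writing $y(t)=\sum_k y_k(t)\mu_k$ and expanding $f$, $y_0$, $y_1$ likewise, the weak formulation \eqref{weakSoluVariForm} is equivalent to requiring that each coefficient solve $y_k''+\lambda_k y_k=f_k$ with $y_k(0)=y_{0,k}$, $y_k'(0)=y_{1,k}$. Since $\lambda_1>0$, the variation-of-constants (Duhamel) formula gives a unique $y_k$ with $|y_k|$ and $\lambda_k^{-1/2}|y_k'|$ controlled by $|y_{0,k}|+\lambda_k^{-1/2}|y_{1,k}|+\lambda_k^{-1/2}\int_0^t|f_k|\,ds$. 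For existence I would define $y$ by this series and show, using the a priori bound derived below applied to partial sums, that it converges in $C(\overline I;\mathbb H^{\alpha+1})$ with $\partial_t y\in C(\overline I;\mathbb H^\alpha)$; one then checks directly that the limit satisfies \eqref{weakSoluVariForm}. Uniqueness follows because the difference of two solutions has vanishing data, forcing every coefficient, hence $y$, to vanish.

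For the first estimate I would argue by a weighted energy identity. Multiplying the $k$-th ODE by $y_k'$ yields $\tfrac{d}{dt}\bigl(\tfrac12(y_k')^2+\tfrac12\lambda_k y_k^2\bigr)=f_k y_k'$; weighting by $\lambda_k^\alpha$ and summing gives, for $E_\alpha(t):=\tfrac12\bigl(\|\partial_t y\|_{\mathbb H^\alpha}^2+\|y\|_{\mathbb H^{\alpha+1}}^2\bigr)$, the identity $E_\alpha'(t)=\langle f,\partial_t y\rangle_{\mathbb H^\alpha}\le\|f\|_{\mathbb H^\alpha}\sqrt{2E_\alpha(t)}$ by Cauchy–Schwarz. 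Dividing by $\sqrt{E_\alpha}$ and integrating produces $\sqrt{E_\alpha(t)}\le\sqrt{E_\alpha(0)}+c\int_0^t\|f\|_{\mathbb H^\alpha}\,ds$, which is exactly the $C(\overline I)$ bound on $\|y\|_{\mathbb H^{\alpha+1}}$ and $\|\partial_t y\|_{\mathbb H^\alpha}$ for $\kappa=1$. Because $I$ is bounded, Hölder's inequality upgrades $\|f\|_{L^1(I;\mathbb H^\alpha)}\le|I|^{1-1/\kappa}\|f\|_{L^\kappa(I;\mathbb H^\alpha)}$, covering all $1\le\kappa\le\infty$. The term $\|\partial_{tt}y\|_{L^\kappa(I;\mathbb H^{\alpha-1})}$ is then read off from the equation $\partial_{tt}y=f+\Delta y$, since $\|\Delta y\|_{\mathbb H^{\alpha-1}}=\|y\|_{\mathbb H^{\alpha+1}}$ is already controlled and $\|f\|_{\mathbb H^{\alpha-1}}\le\lambda_1^{-1/2}\|f\|_{\mathbb H^\alpha}$.

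For the higher regularity estimate \eqref{energy3} the idea is to differentiate the equation in time and reapply \eqref{energy2}. Setting $w=\partial_t y$, formally $w$ solves the wave equation with forcing $\partial_t f$, data $w(0)=y_1$, and $w'(0)=\partial_{tt}y(0)=f(0)+\Delta y_0$. Here the embedding $W^{1,1}(I;\mathbb H^\alpha)\hookrightarrow C(\overline I;\mathbb H^\alpha)$ makes the trace $f(0)$ meaningful, and the hypotheses $y_0\in\mathbb H^{\alpha+2}$, $y_1\in\mathbb H^{\alpha+1}$ guarantee $w(0)\in\mathbb H^{\alpha+1}$ and $w'(0)\in\mathbb H^\alpha$. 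Applying \eqref{energy2} (with $\kappa=1$) to $w$ yields the bounds on $\|\partial_t y\|_{C(\overline I;\mathbb H^{\alpha+1})}$ and $\|\partial_{tt}y\|_{C(\overline I;\mathbb H^\alpha)}$, and the missing top term $\|y\|_{C(\overline I;\mathbb H^{\alpha+2})}$ follows once more from the equation, via $\|y\|_{\mathbb H^{\alpha+2}}=\|\Delta y\|_{\mathbb H^\alpha}\le\|f\|_{\mathbb H^\alpha}+\|\partial_{tt}y\|_{\mathbb H^\alpha}$.

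The main obstacle I anticipate is the rigorous justification of the energy identity at the level of weak solutions: the natural test function $\partial_t y$ is not admissible in \eqref{weakSoluVariForm} (it need not vanish at $t=T$, and its regularity is borderline), so the identity $E_\alpha'=\langle f,\partial_t y\rangle_{\mathbb H^\alpha}$ cannot be read off directly. The spectral construction is precisely what circumvents this: on each finite-dimensional truncation the identity is an elementary ODE computation, it passes to the limit by the very a priori bound it produces, and the same truncation argument legitimizes differentiating in time for \eqref{energy3}. The remaining work is bookkeeping—tracking the dependence of $c$ on $\lambda_1$, $T$, and $\kappa$, and verifying the compatibility of the differentiated initial data.
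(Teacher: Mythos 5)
Your proof is correct, but it takes a genuinely different route from the paper, because the paper does not prove this theorem at all: its entire proof is a citation to \cite[Proposition 1.3, Remark 1.2]{[Zlot]}. What you give instead is a self-contained derivation by eigenfunction expansion, which is the natural argument given that the spaces $\mathbb H^\alpha$ are defined spectrally in Section 2: the mode-wise energy identity summed with weights $\lambda_k^\alpha$ yields \eqref{energy2} for $\kappa=1$, H\"older's inequality on the bounded interval covers general $\kappa$, the equation $\partial_{tt}y=f+\Delta y$ gives the second-derivative bound, and differentiating in time---justified, as you correctly anticipate, at the level of the finite-dimensional truncations, where the compatibility $w'(0)=f(0)+\Delta y_0$ and the energy identity are elementary---gives \eqref{energy3}. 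Two routine points should be made explicit in a full write-up: (i) the division by $\sqrt{E_\alpha}$ requires the standard device of working with $\sqrt{E_\alpha+\varepsilon}$ and letting $\varepsilon\to 0$ (or a Gr\"onwall-type argument), since the energy may vanish; and (ii) uniqueness in the sense of Definition \ref{weakSoluti} follows by testing \eqref{weakSoluVariForm} with $\eta=\phi(t)\mu_k$, $\phi(T)=0$, which forces each coefficient of the difference of two solutions to solve the homogeneous ODE with zero data. What the paper's citation buys is brevity and delegation of these verifications; what your argument buys is a transparent, reference-free proof that fits the paper's spectral framework and makes the dependence of the constant on $T$, $\kappa$ and $\lambda_1$ explicit.
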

\begin{proof}
The proof can be found in \cite[Proposition 1.3, Remark 1.2]{[Zlot]}.
\end{proof}
\begin{definition}
Let us define the following continuous linear operators:
\begin{equation*}
L\colon L^2(\Omega_T) \rightarrow  L^2(\Omega_T),~f \mapsto y(f)\quad \text{and}\quad Q\colon H^1_0(\Omega)\times L^2(\Omega) \rightarrow L^2(\Omega_T),~(y_0,y_1) \mapsto y(y_0,y_1)
\end{equation*}
The function $y(f)$ denotes the weak solution of the wave equation with $y_0=y_1=0$ and forcing function $f$. The function
$y(y_0,y_1) $ denotes the weak solution of the wave equation with initial datum $y_0$ and $y_1$ and $f=0$.
\end{definition}
\begin{lemma}\label{adjointLemma}
The adjoint operator $L^*:L^2(\Omega_T)\rightarrow L^2(\Omega_T)$ of $L$ is given by $w \mapsto p(w)$ where $p(w)\in C(\overline I;H^1_0(\Omega))\cap C(\overline I;L^2(\Omega))$ is the weak solution of the backwards in time equation
\begin{align}
(\mathcal{W}^*) \left\lbrace\begin{matrix}
	\partial_{tt} p - \Delta p = w &\text{ in }&I \times \Omega \\
	p = 0  &\text{ on }&I\times \partial \Omega \\
	(p,\partial_t p) =   (0 , 0)  &\text{ in }& \lbrace T \rbrace \times \Omega.
	\end{matrix}\right.
\end{align}
\end{lemma}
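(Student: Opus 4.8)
The plan is to verify the defining duality relation $\langle Lf, w\rangle_{L^2(\Omega_T)} = \langle f, L^*w\rangle_{L^2(\Omega_T)}$ for all $f,w \in L^2(\Omega_T)$ and to identify the right-hand side with $\langle f, p(w)\rangle_{L^2(\Omega_T)}$. First I would check that $p(w)$ is well defined with the stated regularity: since $I$ is bounded in time, $w\in L^2(\Omega_T)\hookrightarrow L^1(I;L^2(\Omega))$, so under the time substitution $s\mapsto T-s$ the backward problem $(\mathcal W^*)$ becomes a forward problem with vanishing initial data and forcing $w(T-\cdot)$, to which Theorem \ref{energy} applies and yields a unique weak solution $p(w)\in C(\overline I;H^1_0(\Omega))$ with $\partial_t p(w)\in C(\overline I;L^2(\Omega))$. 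Hence $L^*$ as defined is a well-defined continuous linear operator on $L^2(\Omega_T)$.

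Next I would translate Definition \ref{weakSoluti} to the backward equation via the same time reversal. Writing $\tilde p(t)=p(T-t)$, which is a forward weak solution with zero initial data, and substituting $s=T-t$ in the identity \eqref{weakSoluVariForm}, the endpoint condition $\eta|_{t=T}=0$ imposed on the forward test functions turns into the condition $\zeta|_{t=0}=0$ on the backward ones. In this way I obtain that $p=p(w)$ satisfies $p|_{t=T}=0$ together with
\begin{equation*}
\int_0^T -(\partial_t p,\partial_t \zeta)_{L^2(\Omega)}+(\nabla p,\nabla \zeta)_{L^2(\Omega)}\dd t = \int_0^T (w,\zeta)_{L^2(\Omega)}\dd t
\end{equation*}
for every admissible test function $\zeta$ with $\zeta|_{t=0}=0$.

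The heart of the argument is then a single symmetry observation. On the one hand, since $y=Lf$ satisfies $y|_{t=0}=0$ and has exactly the regularity required of a backward test function, I may insert $\zeta=y$ into the identity above to get $\langle w, Lf\rangle_{L^2(\Omega_T)} = \int_0^T -(\partial_t p,\partial_t y)_{L^2(\Omega)}+(\nabla p,\nabla y)_{L^2(\Omega)}\dd t$. On the other hand, since $p=p(w)$ satisfies $p|_{t=T}=0$, it is an admissible test function in the forward weak formulation \eqref{weakSoluVariForm} for $y=Lf$; there the vanishing initial data of $y$ kills the boundary term $(y_1,\eta(0))_{L^2(\Omega)}$ and the forcing is $f$, so I obtain $\langle f, p(w)\rangle_{L^2(\Omega_T)} = \int_0^T -(\partial_t y,\partial_t p)_{L^2(\Omega)}+(\nabla y,\nabla p)_{L^2(\Omega)}\dd t$. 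The two right-hand sides coincide because the bilinear form is symmetric in $y$ and $p$, whence $\langle Lf, w\rangle_{L^2(\Omega_T)} = \langle f, p(w)\rangle_{L^2(\Omega_T)}$, which is precisely the statement $L^*w = p(w)$.

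The only genuine obstacle here is bookkeeping rather than analysis: I must make sure that each of the two solutions qualifies as an admissible test function in the other's weak formulation, i.e. that $y$ and $p$ both lie in $L^1(I;H^1_0(\Omega))$ with time derivative in $L^1(I;L^2(\Omega))$ and satisfy the correct endpoint condition ($y|_{t=0}=0$, respectively $p|_{t=T}=0$). All of this is guaranteed by the regularity from Theorem \ref{energy} together with the homogeneous initial/terminal data built into the definitions of $L$ and of $(\mathcal W^*)$; the single point requiring care is carrying the vanishing endpoint conditions correctly through the time reversal.
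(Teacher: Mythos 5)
The paper states this lemma without any proof (it is followed immediately by Section 3), so there is no argument of the authors to compare yours against; your proposal supplies what the paper leaves implicit, and it is correct. The well-posedness step via the substitution $s\mapsto T-s$ and Theorem \ref{energy} is sound, and the core duality argument works exactly as you describe: since $y=Lf$ and $p=p(w)$ each lie in $C(\overline I;H^1_0(\Omega))$ with time derivative in $C(\overline I;L^2(\Omega))$, each is an admissible test function in the other's weak formulation (the endpoint conditions $y|_{t=0}=0$ and $p|_{t=T}=0$ being precisely the ones required after the time reversal), the zero initial velocity built into $L$ kills the term $(y_1,\eta(0))_{L^2(\Omega)}$, and the symmetry of the bilinear form $\int_0^T -(\partial_t\cdot,\partial_t\cdot)+(\nabla\cdot,\nabla\cdot)\,\mathrm{d}t$ then yields $\langle Lf,w\rangle_{L^2(\Omega_T)}=\langle f,p(w)\rangle_{L^2(\Omega_T)}$ with no integration by parts beyond what Definition \ref{weakSoluti} already encodes. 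One cosmetic remark: the regularity claimed in the lemma, $p(w)\in C(\overline I;H^1_0(\Omega))\cap C(\overline I;L^2(\Omega))$, is presumably a typo for $p(w)\in C(\overline I;H^1_0(\Omega))$ with $\partial_t p(w)\in C(\overline I;L^2(\Omega))$, and this is exactly what your construction delivers.
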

\section{Approximation of the Wave Equation}\label{sec:approxwave}
In the following we introduce the space-time finite element method for the discretization of the wave equation. This method can be found in \cite{[Zlot]}.
We consider a mesh $\mathcal{T}_h$ consisting of a finite set of triangles (for $d=2$) or tetrahedra (for $d=3$) $K$ with $h=\max_{K\in\mathcal{T}_h}\rho(K)$, where $\rho(K)$ denotes the diameter of $K$.
We assume that the family of meshes $(\mathcal{T}_h)_h$ is admissible, shape regular and  quasi-uniform.
Since $\Omega$ is polygonal and convex, we require that $\Omega=\bigcup_{K\in \mathcal T_h}K$ holds. We denote the space of piecewise linear and continuous finite elements based on the triangulation $\mathcal{T}_h$ by $S_h\subset H^1_0(\Omega)\cap C(\overline \Omega)$ and its nodal basis by$(\varphi_i)_{i=1}^N$.
\subsection{Space-Time Finite Element Method}\label{3levelMt}
We discretize the time interval $I$ uniformly with the time nodes $0=t_0 < ... < t_M=T$ and the stepsize $\tau=T/M$. We denote the set of time nodes by $\overline{w}^\tau=\lbrace t_0,...,t_M\rbrace$.
Then we introduce the space of piecewise linear and continuous functions with respect to $\overline{w}^\tau$ by
\begin{equation*}
S_\tau:=\left\lbrace w\in C(\overline I)\vert\quad w|_{[t_{k-1},t_k]} \text{ linear },\textbf{ } 1\leq k \leq M  \right\rbrace.
\end{equation*}
The standard hat functions form a basis $e_m(t_k)=\delta_{mk}$, $m,k=0,\ldots,M$ of this discrete space. Finally, we use the notation $\vartheta:=(\tau,h)$ with $\tau,h>0$.
\begin{definition}\label{discreteSolutdef}
Let $\sigma\geq 0$. We call $y_{\vartheta}\in \hat{S}_{\vartheta}:=\text{span}\lbrace v_h\cdot v_\tau\vert v_h\in S_h\text{, }v_\tau\in S_\tau \rbrace$ a discrete solution of (\ref{weakSoluVariForm}) if $y_{\vartheta}$ satisfies:
\begin{multline}\label{discreteSolutdefeq}
\int_0^T-(\partial_t y_{\vartheta},\partial_t\eta)_{L^2(\Omega)}-(\sigma -\frac{1}{6})\tau^2( \nabla \partial_t y_{\vartheta},\nabla \partial_t\eta)_{L^2(\Omega)}+ (\nabla y_{\vartheta},\nabla \eta)_{L^2(\Omega)}~dt\\
= (y_1,\eta(0))_{L^2(\Omega)} + \int_0^T(f,\eta)_{L^2(\Omega)}~dt
\end{multline}
for all $\eta \in \hat{S}_{\vartheta}$ with $\eta(T)=0$ and initial condition $y_{\vartheta}(0):=R_hy_{0}$, where $R_h$ is the Ritz projection on $S_h$, i.e.
\begin{equation*}
(\nabla R_hy_{0},\nabla\varphi)_{L^2(\Omega)}=(\nabla y_0,\nabla \varphi)_{L^2(\Omega)}\quad \forall\varphi \in S_h.
\end{equation*}
\begin{remark}
Here $\sigma$ plays the role of a stabilization parameter. With an increasing value of $\sigma$ the method becomes more stable. For $\sigma\geq1/4$ the method is unconditionally stable, see \cite{[Zlot]}.
\end{remark}
\end{definition}
\subsection{A Priori Error Estimates for the Space-Time Finite Element Method}
Next we make an assumption on the relationship between $\tau$ and $h$ which ensures stability of the method for $0\leq \sigma<1/4$.
\begin{assumption}\label{AssumpOntauh}
Let $\varepsilon_0\in (0,1]$ be arbitrary and fixed. Moreover, let $c_1$ be the smallest constant in the inverse inequality $\|\nabla \varphi\|_{L^2(\Omega)}\leq c_1 h^{-1}\|\varphi\|_{L^2(\Omega)}$ for all $\varphi_h\in S_h$. Moreover, let a $c_2$ be the constant in this a priori estimate for the Ritz projection $\|w-R_hw\|_{L^2(\Omega)}\leq c_2h\|\nabla w\|_{L^2(\Omega)}$. From now on it is assumed that
\begin{enumerate}
  \item $\sigma\geq \frac 14-\frac{c_1h^2(1-\varepsilon_0^2)}{\tau^2}$,
  \item $\sigma\geq \frac{1+\varepsilon_0^2}{4}-\frac{c_1h^2}{\tau^2}$,
  \item $\vert \sigma\vert \tau^2 \leq 2(c_2h^2 +\tau^2)$.
\end{enumerate}
\end{assumption}
\begin{remark}
This space-time finite element method is related to well-known time-stepping schemes. For $\sigma=0$ it is related to the explicit Leap-Frog-method and for $\sigma=\frac{1}{4}$ to the Crank-Nicolson scheme, see also \cite[Remark 5.1, 5.4]{vextra}.
A more detailed discussion can be found in \cite{[Zlot]}.
\end{remark}
\begin{lemma}\label{LtauhAbsch}
The solution $y_\vartheta$ of \eqref{discreteSolutdefeq} for $(f,y_0,y_1)\in L^1(I,L^2(\Omega))\times H^1_0(\Omega)\times L^2(\Omega)$ satisfies the following inequality
\begin{equation}\label{bddlinOpLtauh}
\|y_\vartheta\|_{C(\overline I;L^2(\Omega))}\leq c\left(\|y_0\|_{H^1_0(\Omega)}+\|y_1\|_{L^2(\Omega)}+\|f\|_{L^1(I;L^2(\Omega))}\right)
\end{equation}
with a constant $c$ independent of $h$, $f$, $y_0$ and $y_1$.
\end{lemma}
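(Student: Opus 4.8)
The plan is to establish the bound by a discrete energy argument that mimics the continuous energy estimate \eqref{energy2} (taken with $\alpha=0$, $\kappa=1$), exploiting that the space-time scheme \eqref{discreteSolutdefeq} is equivalent to a time-stepping recursion (as anticipated by the remark identifying it with the Leap-Frog and Crank-Nicolson schemes). First I would insert the tensor-product nodal basis functions $\eta=\varphi_i\,e_m$ as test functions and rewrite \eqref{discreteSolutdefeq} as a recursion for the nodal states $y_\vartheta(t_k)\in S_h$ and the associated discrete velocities. The stabilization term $-(\sigma-\tfrac16)\tau^2(\nabla\partial_t y_\vartheta,\nabla\partial_t\eta)_{L^2(\Omega)}$ modifies the effective mass operator, and this is exactly the quantity that Assumption \ref{AssumpOntauh} is designed to control.

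The core step is to introduce a discrete energy $E_k$ at each node $t_k$, assembled from $\|\partial_t y_\vartheta\|_{L^2(\Omega)}$ on $[t_{k-1},t_k]$, from $\|\nabla y_\vartheta(t_k)\|_{L^2(\Omega)}$, and from the stabilization contribution $(\sigma-\tfrac16)\tau^2\|\nabla\partial_t y_\vartheta\|_{L^2(\Omega)}^2$, and to show that $E_k$ is equivalent, uniformly in $\vartheta$, to a genuine norm of the discrete state--velocity pair. This is where conditions (1)--(3) of Assumption \ref{AssumpOntauh}, together with the inverse inequality $\|\nabla\varphi\|_{L^2(\Omega)}\le c_1 h^{-1}\|\varphi\|_{L^2(\Omega)}$, enter: for $\sigma<\tfrac14$ the sign of the stabilization term may be negative, and the CFL-type bounds guarantee that the modified mass operator $M-(\sigma-\tfrac16)\tau^2 K$ stays positive definite, so that $E_k\ge 0$ and $E_k\gtrsim\|\nabla y_\vartheta(t_k)\|_{L^2(\Omega)}^2$.

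Next I would derive the recursion $E_k\le E_{k-1}+\int_{t_{k-1}}^{t_k}(f,\partial_t y_\vartheta)_{L^2(\Omega)}\dd t$ by testing the scheme interval-by-interval against the discrete velocity, bound the forcing term by $c\,\sqrt{E_k}\,\|f\|_{L^1(t_{k-1},t_k;L^2(\Omega))}$, and pass to $\sqrt{E_k}$ to obtain, via a discrete Duhamel/Gronwall argument, $\max_k\sqrt{E_k}\le c(\sqrt{E_0}+\|f\|_{L^1(I;L^2(\Omega))})$. The initial energy $E_0$ is controlled by the data using $y_\vartheta(0)=R_h y_0$, the $H^1$-stability $\|\nabla R_h y_0\|_{L^2(\Omega)}\le\|\nabla y_0\|_{L^2(\Omega)}$ of the Ritz projection, and $\|y_1\|_{L^2(\Omega)}$. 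Finally, since $y_\vartheta$ is piecewise linear in time, convexity of the norm shows its $C(\overline I;L^2(\Omega))$-norm is attained at a node, so that $\|y_\vartheta(t_k)\|_{L^2(\Omega)}\le c\|\nabla y_\vartheta(t_k)\|_{L^2(\Omega)}\le c\sqrt{E_k}$ (Poincar\'e) yields the claim.

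The main obstacle is the second step: controlling the sign-indefinite stabilization term when $\sigma<\tfrac14$ and proving the norm-equivalence of $E_k$ with all constants independent of $h$ and $\tau$. This is precisely the point at which the three CFL-type conditions of Assumption \ref{AssumpOntauh} are needed to force the modified mass operator to remain coercive, following the analysis of \cite{[Zlot]}. A secondary difficulty is that the forcing lies only in $L^1(I;L^2(\Omega))$ rather than $L^2(\Omega_T)$, which is what necessitates the passage to $\sqrt{E_k}$ so that $\|f\|_{L^1(I;L^2(\Omega))}$ enters linearly.
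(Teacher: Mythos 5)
Your proposal is correct in substance, but it takes a genuinely different route from the paper: the paper's entire proof of Lemma \ref{LtauhAbsch} is a citation, invoking the stability result \cite[Theorem 2.1, Remark 2.1]{[Zlot]} for exactly this space-time scheme, whereas you reconstruct the discrete energy argument that sits behind that citation. Your plan is the standard one for this class of schemes: pass to the equivalent three-level nodal recursion, build a discrete energy $E_k$ whose uniform (in $h,\tau$) coercivity is exactly what the CFL-type conditions of Assumption \ref{AssumpOntauh} provide (this is where the sign-indefinite stabilization for $\sigma<\tfrac14$ is absorbed via the inverse inequality), treat the $L^1(I;L^2(\Omega))$ forcing by passing to $\sqrt{E_k}$ so that $\|f\|_{L^1(I;L^2(\Omega))}$ enters linearly, control $E_0$ through the $H^1$-stability of the Ritz projection, and conclude by convexity (the $C(\overline I;L^2(\Omega))$-norm of a function piecewise linear in time is attained at a node) plus Poincar\'e. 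What your approach buys is self-containedness and transparency: it makes visible precisely where Assumption \ref{AssumpOntauh} is used, which the paper's one-line proof hides entirely. What the citation buys is brevity and the assurance that the delicate cases (forcing only in $L^1(I;L^2(\Omega))$, the full admissible range of $\sigma$) are covered by an established theorem rather than by a sketch. One technical point you should tighten if writing this out: $\partial_t y_\vartheta$ is piecewise constant in time and therefore \emph{not} an admissible test function in \eqref{discreteSolutdefeq} (test functions there are continuous, piecewise linear, with $\eta(T)=0$), so the phrase ``testing the scheme interval-by-interval against the discrete velocity'' must be implemented on the nodal recursion from your first step, with the symmetric difference quotient $\bigl(y_\vartheta(t_{k+1})-y_\vartheta(t_{k-1})\bigr)/(2\tau)$ as the multiplier; this is the standard manipulation and consistent with your outline, but the space-time weak form itself does not admit that test function.
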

\begin{proof}
The result follows directly from \cite[Theorem 2.1, Remark 2.1]{[Zlot]}.
\end{proof}
\begin{theorem}\label{AprioriThm}
The following error estimates hold:
\begin{equation}\label{Cor221}
\|y-y_{\vartheta}\|_{C(\overline{I};L^2(\Omega))}\leq
c(h^2+\tau^2)\left(\|y_0\|_{\mathbb{H}^{3}}+\|y_1\|_{\mathbb{H}^{2}}+\|f\|_{L^\infty(I;\mathbb H^{2})} \right)
\end{equation}
provided $(f,y_0,y_1)\in L^\infty(I;\mathbb H^2)\times \mathbb{H}^{3}\times \mathbb{H}^{2}$ as well as
\begin{equation}\label{AprioriEsti}
\|y-y_{\vartheta}\|_{C(\overline{I};L^2(\Omega))}\leq
c(h^2+\tau^2)\left(\|y_0\|_{\mathbb{H}^{3}}+\|y_1\|_{\mathbb{H}^{2}}+\|f\|_{W^{1,1}(I;H^1_0(\Omega))} \right)
\end{equation}
provided $(f,y_0,y_1)\in W^{1,1}(I;H^1_0(\Omega))\times \mathbb{H}^{3}\times \mathbb{H}^{2}$.
\end{theorem}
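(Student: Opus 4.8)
The plan is to deduce both estimates from a single abstract bound, expressed in terms of norms of the \emph{continuous} solution $y$, and then to trade those solution norms for norms of the data $(f,y_0,y_1)$ by means of Theorem~\ref{energy}. Concretely, the scheme \eqref{discreteSolutdefeq} is exactly the one analysed in \cite{[Zlot]}, and its error analysis there yields, under Assumption~\ref{AssumpOntauh}, a bound of the form
\begin{equation*}
\|y-y_\vartheta\|_{C(\overline I;L^2(\Omega))}\le c(h^2+\tau^2)\Bigl(\|y\|_{C(\overline I;\mathbb H^3)}+\|\partial_t y\|_{C(\overline I;\mathbb H^2)}+\|\partial_{tt}y\|_{L^\infty(I;\mathbb H^1)}\Bigr).
\end{equation*}
Granting this intermediate estimate, the theorem reduces to bounding the right-hand side by the two different data norms appearing in \eqref{Cor221} and \eqref{AprioriEsti}.

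If one instead wants a self-contained argument, I would proceed by the classical consistency-plus-stability scheme built on Lemma~\ref{LtauhAbsch}. First I would introduce a space-time projection $\tilde y\in\hat S_\vartheta$ of $y$ — for instance the tensor product of the Ritz projection $R_h$ in space with nodal interpolation into $S_\tau$ in time — and split $y-y_\vartheta=(y-\tilde y)+(\tilde y-y_\vartheta)$. The first summand is controlled by standard approximation estimates for $R_h$ and for piecewise-linear time interpolation, producing the factor $h^2+\tau^2$ against $\|y\|_{C(\overline I;\mathbb H^3)}$ and the second time derivative of $y$. For the second summand, which lies in $\hat S_\vartheta$, I would test \eqref{discreteSolutdefeq} with $\eta\in\hat S_\vartheta$, $\eta(T)=0$, and use that $y_\vartheta$ solves it exactly while $y$ solves the continuous weak form \eqref{weakSoluVariForm}. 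Subtracting identifies $\tilde y-y_\vartheta$ as the discrete solution of the scheme driven by the \emph{defect} of $\tilde y$, namely the consistency error of the interpolant in the continuous bilinear form plus the extra stabilisation contribution $-(\sigma-\tfrac16)\tau^2(\nabla\partial_t\tilde y,\nabla\partial_t\eta)$. After integrating by parts in time (using $\eta(T)=0$) this defect is represented as an $L^1(I;L^2(\Omega))$ forcing together with an initial-velocity term, so that Lemma~\ref{LtauhAbsch} bounds $\|\tilde y-y_\vartheta\|_{C(\overline I;L^2(\Omega))}$ by the defect.

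The main obstacle in this self-contained route is precisely the consistency analysis of the defect, and in particular of the stabilisation term: the coefficient $\sigma-\tfrac16$ together with the inverse inequality and quasi-uniformity encoded in Assumption~\ref{AssumpOntauh} must be used to show that $\tau^2(\nabla\partial_t\tilde y,\nabla\partial_t\eta)$ contributes only at order $h^2+\tau^2$ rather than at a lower order; this is exactly where the particular stabilisation and the constraints (1)--(3) on $(\sigma,\tau,h)$ enter, and where the inequality \eqref{bddlinOpLtauh} has to be invoked with the defect as right-hand side. The mismatches of time derivatives at the interpolation nodes and the replacement of $y_\vartheta(0)=R_hy_0$ by $\tilde y(0)$ must also be tracked, but these are of lower order once the regularity of $y$ is in hand.

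Either way the final step is the same. For \eqref{Cor221} I would apply the regularity estimate \eqref{energy2} with $\alpha=2$ and $\kappa=\infty$, which yields $\|y\|_{C(\overline I;\mathbb H^3)}+\|\partial_t y\|_{C(\overline I;\mathbb H^2)}+\|\partial_{tt}y\|_{L^\infty(I;\mathbb H^1)}\le c(\|y_0\|_{\mathbb H^3}+\|y_1\|_{\mathbb H^2}+\|f\|_{L^\infty(I;\mathbb H^2)})$ under the hypothesis $(f,y_0,y_1)\in L^\infty(I;\mathbb H^2)\times\mathbb H^3\times\mathbb H^2$. For \eqref{AprioriEsti} I would instead use \eqref{energy3} with $\alpha=1$, so that $\mathbb H^{\alpha+2}=\mathbb H^3$, $\mathbb H^{\alpha+1}=\mathbb H^2$, and $\mathbb H^{\alpha}=\mathbb H^1=H^1_0(\Omega)$, which bounds the same solution norms (with $\|\partial_{tt}y\|_{C(\overline I;\mathbb H^1)}$ in place of, and controlling, the $L^\infty$ norm) by $c(\|y_0\|_{\mathbb H^3}+\|y_1\|_{\mathbb H^2}+\|f\|_{W^{1,1}(I;H^1_0(\Omega))})$. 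Substituting these two bounds into the intermediate error estimate gives the claimed inequalities \eqref{Cor221} and \eqref{AprioriEsti} respectively.
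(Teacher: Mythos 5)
Your proposal is correct and follows essentially the same route as the paper: the paper's proof is a direct citation of the error analysis in \cite[Theorem 4.1, 4.3 and comments in its proof]{[Zlot]}, which is exactly the intermediate bound you invoke, and your conversion of solution norms into data norms via Theorem~\ref{energy} (estimate \eqref{energy2} with $\alpha=2$, $\kappa=\infty$ for \eqref{Cor221}, and \eqref{energy3} with $\alpha=1$ for \eqref{AprioriEsti}) is precisely the reduction implicit in that citation. The additional self-contained consistency-plus-stability sketch is a reasonable outline but is not needed, since the cited reference carries the core of the argument.
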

\begin{proof}
The result follows directly from \cite[Theorem 4.1., 4.3. and comments in its proof]{[Zlot]}.
\end{proof}
\section{Equivalent Problem $(\tilde{P})$ }\label{sec:controlprob}
In this section we introduce a specific isomorphism between $BV(I)^m\otimes \{(g_j)_{j=1}^m\}$ and $M(I)^m\times \mathbb{R}^m$. Based on this isomorphism $(P)$ is equivalently formulated as a measure valued control problem. First of all we prove existence and uniqueness of a solution to $(P)$.
\begin{theorem}\label{ExistenceSoluti} Problem $(P)$ has a unique solution in $BV(I)^m$.
\end{theorem}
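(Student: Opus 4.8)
The plan is to prove existence with the direct method of the calculus of variations and uniqueness from strict convexity of the reduced cost. Write $Au := L\bigl(\sum_{j=1}^m u_j g_j\bigr)$ for the linear control-to-state operator, so that by superposition $y_u = Au + Q(y_0,y_1)$ and the reduced functional is
\[
j(u) := \frac12\bigl\|Au + Q(y_0,y_1) - y_d\bigr\|_{L^2(\Omega_T)}^2 + \sum_{j=1}^m \alpha_j\|D_t u_j\|_{M(I)}.
\]
I first record that $A$ is injective: the superposition $u\mapsto \sum_j u_j g_j$ is injective because the $g_j$ have pairwise disjoint supports and are nonzero, so $\sum_j u_j g_j = 0$ forces $u_j = 0$ on $\supp g_j$, hence $u_j = 0$; and $L$ is injective since $Lf = 0$ gives $f = \partial_{tt}(Lf) - \Delta(Lf) = 0$. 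Since each $u_j\in BV(I)\hookrightarrow L^\infty(I)\subset L^1(I)$ and $g_j\in L^\infty(\Omega)\subset L^2(\Omega)$, the forcing lies in $L^1(I;L^2(\Omega))$ and Theorem~\ref{energy} (with $\alpha=0$, $\kappa=1$) yields $Au\in C(\overline I;H^1_0(\Omega))\hookrightarrow L^2(\Omega_T)$ together with the bound $\|Au\|_{L^2(\Omega_T)}\le c\sum_j\|g_j\|_{L^2(\Omega)}\|u_j\|_{L^1(I)}$. Hence $j$ is well defined and nonnegative; let $(u^k)_k$ be a minimizing sequence.

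The main obstacle is the a priori bound on $(u^k)$ in $BV(I)^m$, since the regularization penalizes only the derivatives $D_t u_j$ and not the functions themselves. From $j(u^k)\le C$ I read off $\|D_t u_j^k\|_{M(I)}\le C$ and $\|Au^k\|_{L^2(\Omega_T)}\le C$. I split $u_j^k = c_j^k + v_j^k$ into its mean $c_j^k := \frac1T\int_0^T u_j^k\dd t$ and a mean-free part $v_j^k$. Then $D_t v_j^k = D_t u_j^k$, and the Poincaré inequality on the interval gives $\|v_j^k\|_{L^1(I)}\le C\|D_t u_j^k\|_{M(I)}\le C$; combined with the boundedness estimate above this yields $\|Av^k\|_{L^2(\Omega_T)}\le C$ and therefore $\|Ac^k\|_{L^2(\Omega_T)}\le \|Au^k\| + \|Av^k\|\le C$. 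Now the restriction of $A$ to constant controls, i.e.\ the linear map $c\mapsto L(\sum_j c_j g_j)$ on the finite-dimensional space $\mathbb R^m$, is injective and hence bounded below, so $|c^k|\le C$. It follows that $(u^k)$ is bounded in $BV(I)^m$.

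By the compactness of $BV$ on an interval I extract a subsequence, not relabeled, with $u_j^k\to\bar u_j$ strongly in $L^1(I)$ and $D_t u_j^k\overset{*}{\rightharpoonup} D_t\bar u_j$ in $M(I)$, so $\bar u\in BV(I)^m$. The $L^1$-convergence of the controls gives $\sum_j u_j^k g_j\to\sum_j\bar u_j g_j$ in $L^1(I;L^2(\Omega))$, whence $Au^k\to A\bar u$ in $C(\overline I;L^2(\Omega))$ by the stability estimate of Theorem~\ref{energy}, in particular in $L^2(\Omega_T)$; thus the tracking term converges. Weak-$*$ lower semicontinuity of the total variation norm gives $\|D_t\bar u_j\|_{M(I)}\le\liminf_k\|D_t u_j^k\|_{M(I)}$. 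Combining, $j(\bar u)\le\liminf_k j(u^k)=\inf j$, so $\bar u$ is a minimizer.

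Uniqueness follows from strict convexity of $j$. Because $A$ is injective, the map $u\mapsto\frac12\|Au + Q(y_0,y_1)-y_d\|_{L^2(\Omega_T)}^2$ is strictly convex, being the composition of the injective affine map $u\mapsto Au+Q(y_0,y_1)-y_d$ with the strictly convex functional $\frac12\|\cdot\|_{L^2(\Omega_T)}^2$; the total variation term is convex. Hence $j$ is strictly convex and admits at most one minimizer, which together with the existence part proves the claim.
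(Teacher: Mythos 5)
Your proof is correct and takes essentially the same route as the paper: the paper's proof is a one-line appeal to the direct method of \cite[Theorem 3.1]{[KCK]}, and your argument carries out exactly that method in full detail (splitting into means and mean-free parts, Poincar\'e inequality, $BV$-compactness, weak-$*$ lower semicontinuity of the total variation), adding strict convexity via injectivity of the control-to-state map to get uniqueness. Your way of bounding the means---injectivity, and hence a lower bound, of $A$ restricted to the finite-dimensional space of constant controls---is a clean counterpart of the normalization/contradiction argument the paper uses for the analogous discrete statement in Lemma \ref{boundedDiscreteoptiInBVandLInfty}.
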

\begin{proof}
Utilizing the fact, that the forward mapping is continuous from $L^2(I)^m$ to $L^2(\Omega_T)$, the proof can be carried out along the line of \cite[Theorem 3.1]{[KCK]}.
\end{proof}
Next we introduce several linear and continuous operators and discuss its properties. The operator $B\colon  M(I)^m \times \mathbb{R}^m  \rightarrow  L^2(\Omega_T)$ is given by
\begin{equation}\label{B_red_to_L_2}
(v,c)  \mapsto  \sum\nolimits_{j=1}^m \left( \int\nolimits_0^t~dv_j(s) -\frac1T\int\nolimits_0^T\int\nolimits_0^t~dv_j(s)~ds + c_j \right)g_j.
\end{equation}
The measures $v_j$ are the derivatives of the generated BV-function and $c_j$ are the mean values.
Next, we define the predual operator of $B$ given by $B^*\colon L^2(\Omega_T)\to C_0(I)^m\times \mathbb R^m$
\[
B^*\colon q\mapsto \left(w'_1,\ldots,w'_m,\int\nolimits_0^T\int\nolimits_\Omega qg_1~dx~\mathrm dt,\ldots,\int\nolimits_0^T\int\nolimits_\Omega qg_m~ dx~dt\right)
\]
where $w\in H^2(I)$ solves
\begin{equation}\label{eq:timepde}
\left\{\begin{aligned}
-w''_j&=\int\nolimits_\Omega q(\cdot,x)g_j(x)~dx-\frac1T\int\nolimits_0^T\int\nolimits_\Omega q(t,x)g_j(x)~dx~dt\quad\text{in }(0,T)\\
w'_j(0)&=w'_j(T)=0\quad \text{with} \quad \int\nolimits_0^Tw_j(t)~dt=0\quad\text{for}\quad j=1,\ldots,m.
\end{aligned}
\right.
\end{equation}
\begin{proposition}
The operator $B^*\colon L^2(\Omega_T)\to C_0(I)^m\times \mathbb R^m$ is well defined and the predual of $B$, i.e. the following holds
\[
\int\nolimits_{\Omega_T}B(v,c)q~\mathrm dx~\mathrm dt=\langle (v,c),B^*(q)\rangle
\]
for all $(v,c)\in M(I)^m\times\mathbb R^m$ and for all $q\in L^2(\Omega_T)$.
\end{proposition}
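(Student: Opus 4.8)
The plan is to verify the duality identity directly by substituting the explicit formulas for $B(v,c)$ and $B^*(q)$ and showing both sides reduce to the same expression. First I would write out the left-hand side by inserting \eqref{B_red_to_L_2}:
\[
\int\nolimits_{\Omega_T}B(v,c)q~\mathrm dx~\mathrm dt=\sum\nolimits_{j=1}^m\int\nolimits_0^T\left(\int\nolimits_0^t dv_j(s)-\frac1T\int\nolimits_0^T\int\nolimits_0^t dv_j(s)~ds+c_j\right)\left(\int\nolimits_\Omega q(t,x)g_j(x)~dx\right)dt.
\]
The $c_j$-term immediately pairs with $\int_0^T\int_\Omega qg_j~dx~dt$, matching the $\mathbb R^m$-components of $B^*(q)$ against $c\in\mathbb R^m$, so that part of the identity is essentially by construction. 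The real content is to show the remaining $M(I)^m$-part equals $\sum_j\langle v_j,w_j'\rangle$, i.e. that the first two terms in the antiderivative, integrated against $\int_\Omega qg_j~dx$ over $t$, reproduce the duality pairing $\int_0^T w_j'~dv_j$.

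The key step is an integration-by-parts (Fubini) manoeuvre in the time variable. Writing $\phi_j(t):=\int_\Omega q(t,x)g_j(x)~dx$ and recalling from \eqref{eq:timepde} that $-w_j''=\phi_j-\frac1T\int_0^T\phi_j~dt$ with Neumann data $w_j'(0)=w_j'(T)=0$, I would observe that the function multiplying $\phi_j$ inside the $t$-integral is precisely a primitive of $v_j$ minus its mean; swapping the order of integration in $\int_0^T\big(\int_0^t dv_j(s)\big)\phi_j(t)~dt$ via Fubini turns it into $\int_0^T\big(\int_s^T\phi_j(t)~dt\big)dv_j(s)$. Then the goal becomes identifying $\int_s^T\phi_j(t)~dt$, after correcting for the mean-value subtraction, with $-w_j'(s)$. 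This follows by integrating the ODE: since $w_j''=-(\phi_j-\bar\phi_j)$ where $\bar\phi_j=\frac1T\int_0^T\phi_j~dt$, we get $w_j'(T)-w_j'(s)=-\int_s^T(\phi_j-\bar\phi_j)~dt$, and using $w_j'(T)=0$ yields $w_j'(s)=\int_s^T(\phi_j-\bar\phi_j)~dt$. The mean-subtraction terms in $B(v,c)$ are exactly what is needed to absorb the $\bar\phi_j$ contributions, so everything matches.

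For well-definedness I would note that the right-hand side of the ODE lies in $L^2(I)$ (as $q\in L^2(\Omega_T)$ and $g_j\in L^\infty(\Omega)$, Cauchy–Schwarz gives $\phi_j\in L^2(I)$), that its mean is zero by construction, so the Neumann problem is solvable, and elliptic regularity gives $w_j\in H^2(I)\hookrightarrow C^1(\overline I)$; hence $w_j'\in C_0(I)$ (it vanishes at both endpoints, which is what places it in $C_0(I)$ and makes the pairing with $v_j\in M(I)$ legitimate). The main obstacle I anticipate is bookkeeping the two mean-value corrections — the one built into the definition of $B$ and the $\bar\phi_j$ arising from the Neumann compatibility condition — and confirming they cancel rather than double-count; a clean way to handle this is to verify the pairing first for $v_j$ a Dirac mass $\delta_{s_0}$ (where both sides are explicitly computable) and then invoke density or linearity, but the direct Fubini computation above should close it without that detour.
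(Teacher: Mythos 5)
Your proof is correct and is essentially the paper's own argument run in reverse: the paper starts from the pairing $\langle (v,c),B^*(q)\rangle$, integrates by parts (justified by density of $C_c^\infty(I)$ in $H^1_0(I)$ --- the exact counterpart of your Fubini swap) and then substitutes the ODE, whereas you start from $\int_{\Omega_T}B(v,c)q\,\mathrm dx\,\mathrm dt$ and integrate the ODE to obtain $w_j'(s)=\int_s^T(\phi_j-\bar\phi_j)\,\mathrm dt$, which is precisely the formula recorded in Proposition~\ref{Propdiff}. Your well-definedness argument ($\phi_j\in L^2(I)$ with zero mean, hence $w_j\in H^2(I)$ and $w_j'\in H^1_0(I)\hookrightarrow C_0(I)$) coincides with the paper's; the only blemish is the transient sign slip where you announce the target as $-w_j'(s)$ although your subsequent computation correctly yields $+w_j'(s)$, which is indeed what the pairing requires.
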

\begin{proof}
The equation \eqref{eq:timepde} has a unique solution $w_j\in H^2(I)$, since $\int\nolimits_\Omega q(\cdot,x)g_j~\mathrm dx-\frac1T\int\nolimits_0^T\int\nolimits_\Omega q(t,x)~\mathrm dx~\mathrm dt\in L^2(I)$ and has zero mean. Moreover, we have $w'_j\in H^1_0(I)\hookrightarrow C_0(I)$. Thus, the operator $B^*$ is well defined.
Moreover, there holds
\begin{multline*}
\langle (v,c),B^*(q)\rangle=\sum\nolimits_{j=1}^m\int\nolimits_0^Tw'_j~ dv_j+\sum\nolimits_{j=1}^mc_j\int\nolimits_0^T\int\nolimits_\Omega qg_j~ dx~ dt\\
=\sum\nolimits_{j=1}^m\int\nolimits_0^T-w''_j\int\nolimits_0^t~ dv_j~ dt+\sum\nolimits_{j=1}^mc_j\int\nolimits_0^T\int\nolimits_\Omega qg_j~ dx~ dt\\
=\sum\nolimits_{j=1}^m\int\nolimits_0^T\left(\int\nolimits_\Omega qg_j~ dx-\frac1T\int\nolimits_0^T\int\nolimits_\Omega qg_j~ dx~ dt\right)\int\nolimits_0^t~ dv_j~ dt+\sum\nolimits_{j=1}^mc_j\int\nolimits_0^T\int\nolimits_\Omega qg_j~ dx~ dt\\
=\int\nolimits_0^T\int\nolimits_\Omega q\sum\nolimits_{j=1}^m\left(\int\nolimits_0^t~ dv_j-\frac1T\int\nolimits_0^T\int\nolimits_0^t~ dv_j~ dt+c_j\right)g_j~ dx~ dt
=\int\nolimits_{\Omega_T}B(v,c)q~ dx~ dt
\end{multline*}
for all $(v,c)\in M(I)^m\times\mathbb R^m$ and for all $q\in L^2(\Omega_T)$. The use of integration by parts is justified by the density of $C_c^\infty(I)$ in $H^1_0(I)$.
\end{proof}
\begin{proposition}\label{Propdiff}
Let $w_j\in H^2(I)$, $j=1,\cdots,m$ be the solution of \eqref{eq:timepde}. Then there holds
\[
w'_j(t)=\int\nolimits_t^T\int\nolimits_\Omega q(s,x)g_j(x)~ dx~ ds+\frac{(t-T)}T\int\nolimits_0^T\int\nolimits_\Omega q(t,x)g_j(x)~ dx~ dt.
\]
\end{proposition}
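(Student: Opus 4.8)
The plan is to integrate the defining ODE \eqref{eq:timepde} directly and to exploit only the Neumann condition at the right endpoint, so that the auxiliary function $w_j$ itself (together with its zero-mean normalization) never has to be reconstructed: the statement concerns $w_j'$ alone, and $w_j'$ does not see the additive constant left undetermined by the Neumann problem. First I would abbreviate $\phi_j(t):=\int_\Omega q(t,x)g_j(x)\,dx$, which lies in $L^2(I)$ because $q\in L^2(\Omega_T)$ and $g_j\in L^\infty(\Omega)$, and write $\bar\phi_j:=\frac1T\int_0^T\phi_j(t)\,dt$ for its mean, so that the right-hand side of \eqref{eq:timepde} is exactly $\phi_j-\bar\phi_j$ and the equation reads $-w_j''=\phi_j-\bar\phi_j$.

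Since $w_j\in H^2(I)$, its derivative $w_j'$ belongs to $H^1(I)\hookrightarrow C(\overline I)$, so the pointwise value $w_j'(T)=0$ is meaningful and $w_j'$ is absolutely continuous; hence the fundamental theorem of calculus applies. Integrating $-w_j''=\phi_j-\bar\phi_j$ over $(t,T)$ and inserting $w_j'(T)=0$ yields
\[
w_j'(t)=\int\nolimits_t^T\phi_j(s)\,ds-(T-t)\bar\phi_j=\int\nolimits_t^T\int\nolimits_\Omega q(s,x)g_j(x)\,dx\,ds+\frac{(t-T)}{T}\int\nolimits_0^T\int\nolimits_\Omega q(t,x)g_j(x)\,dx\,dt,
\]
which is precisely the claimed identity.

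For completeness I would remark that the candidate is consistent with the remaining data: differentiating it returns $w_j''=-(\phi_j-\bar\phi_j)$, recovering the ODE, while evaluating at $t=0$ gives $w_j'(0)=\int_0^T\phi_j\,ds-T\bar\phi_j=0$, so the left Neumann condition is automatically satisfied. Because \eqref{eq:timepde} determines $w_j$ uniquely up to a constant and fixes that constant through the zero-mean condition, in particular $w_j'$ is uniquely determined, and the argument is closed.

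There is essentially no hard step here; the only points requiring care are at the level of regularity bookkeeping. Concretely, I would make sure that $\phi_j\in L^2(I)$ so that all the integrals and the antiderivative $\int_t^T\phi_j$ are well defined and absolutely continuous, and that the $H^2(I)$-regularity of $w_j$ legitimizes both the pointwise use of $w_j'(T)=0$ and the integration of the second-order equation through the fundamental theorem of calculus for Sobolev functions.
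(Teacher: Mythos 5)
Your proof is correct, and it is exactly the argument the paper leaves implicit: Proposition \ref{Propdiff} is stated there without proof, the intended justification being precisely your computation of integrating $-w_j''=\phi_j-\bar\phi_j$ over $(t,T)$ and using $w_j'(T)=0$, with the $H^2(I)$-regularity making the fundamental theorem of calculus applicable. Your additional consistency checks (recovery of the ODE and of $w_j'(0)=0$) are fine but not needed.
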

\begin{proposition}
The operator $B\colon M(I)^m\times \mathbb R^m\to BV(I)^m\otimes \{(g_j)_{j=1}^m\}$ is an isomorphism.
\end{proposition}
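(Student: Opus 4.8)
The plan is to factor $B$ through $BV(I)^m$ and reduce the statement to two elementary facts: the classical scalar identification of $BV(I)$ with $M(I)\times\mathbb R$, and the linear independence of the spatial profiles $g_j$. I would write $B=\Psi\circ\Phi$, where $\Phi\colon M(I)^m\times\mathbb R^m\to BV(I)^m$ sends $(v,c)$ to $(u_1,\dots,u_m)$ with
\[
u_j(t)=\int_0^t dv_j(s)-\frac1T\int_0^T\int_0^t dv_j(s)\,ds+c_j,
\]
and $\Psi\colon BV(I)^m\to BV(I)^m\otimes\{(g_j)_{j=1}^m\}$ sends $(u_1,\dots,u_m)$ to $\sum_{j=1}^m u_jg_j$. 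It then suffices to show that each of $\Phi$ and $\Psi$ is a bounded linear bijection with bounded inverse, since a composition of topological isomorphisms is again one.

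For $\Phi$ the argument is componentwise and amounts to $BV(I)\cong M(I)\times\mathbb R$ via the derivative and the mean value. On one hand, $u_j$ given by the displayed formula is a $BV$ function with $D_t u_j=v_j$ and mean value $T^{-1}\int_0^T u_j\,dt=c_j$; since the cumulative integral $t\mapsto\int_0^t dv_j$ is bounded in modulus by $\|v_j\|_{M(I)}$, one obtains $\|u_j\|_{BV(I)}\le C\big(\|v_j\|_{M(I)}+|c_j|\big)$, so $\Phi$ is bounded. On the other hand, the candidate inverse $u_j\mapsto\big(D_t u_j,\,T^{-1}\int_0^T u_j\,dt\big)$ is clearly bounded, because $\|D_t u_j\|_{M(I)}\le\|u_j\|_{BV(I)}$ and the mean is controlled by $\|u_j\|_{L^1(I)}$. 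The two maps are mutually inverse because any two $BV$ functions with the same distributional derivative differ by a constant, and that constant is pinned down by matching the mean value; I would spell this reconstruction identity out as the one genuinely nontrivial verification.

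For $\Psi$, linearity and boundedness are immediate, and the crux is injectivity (equivalently, independence of the $g_j$), which is exactly where the hypotheses of pairwise disjoint supports and $g_j\neq0$ enter. Given $f=\sum_j u_jg_j$, disjointness yields $\int_\Omega g_ig_j\,dx=0$ for $i\neq j$ and $\|g_j\|_{L^2(\Omega)}^2>0$, so that $u_j$ is recovered explicitly by the bounded formula
\[
u_j(t)=\|g_j\|_{L^2(\Omega)}^{-2}\int_\Omega f(t,x)\,g_j(x)\,dx.
\]
This simultaneously proves injectivity and exhibits a bounded inverse, identifying $BV(I)^m\otimes\{(g_j)_{j=1}^m\}$ (endowed with the norm transported from $BV(I)^m$) with $BV(I)^m$. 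Composing, $B=\Psi\circ\Phi$ is a bounded bijection with bounded inverse, hence an isomorphism.

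I expect the main obstacle to be the careful verification of the scalar reconstruction identity underlying $\Phi$ — namely that applying the derivative-and-mean map and then the integral formula returns the original $BV$ function — together with tracking the constants so that both $\Phi$ and $\Phi^{-1}$ are genuinely continuous. Everything concerning $\Psi$ is routine once the disjoint-support orthogonality of the $g_j$ is exploited.
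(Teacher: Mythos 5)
Your proof is correct and takes essentially the same route as the paper: the paper's entire proof consists of exhibiting the inverse $B^{-1}\colon \sum_j u_jg_j\mapsto \bigl(u'_1,\ldots,u'_m,\tfrac1T\int_0^Tu_1\,dt,\ldots,\tfrac1T\int_0^Tu_m\,dt\bigr)$, which is precisely your composition $\Phi^{-1}\circ\Psi^{-1}$. Your factorization through $BV(I)^m$ merely spells out the details the paper leaves implicit — in particular that recovering the coefficients $u_j$ from $\sum_j u_jg_j$ uses the disjoint supports and nonvanishing of the $g_j$, and that both factors are bounded with bounded inverses.
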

\begin{proof}
The inverse of $B$ is given by
\[
B^{-1}\colon \sum\nolimits_{j=1}^mu_jg_j\mapsto \left(u'_1,\ldots,u'_m,\frac 1T\int\nolimits_0^Tu_1~ dt,\ldots, \frac 1T\int\nolimits_0^T u_m~ dt \right).
\]
\end{proof}
Using $B$ we can rewrite $(P)$ into the equivalent problem
\begin{align*}
(\tilde{P})\left\lbrace\begin{matrix}
& \min_{
\begin{matrix}
v\in M(I)^m \\
c\in \mathbb{R}^m
\end{matrix}} \frac{1}{2}\| S(v,c)-y_d \|_{L^2(\Omega_T)}^2 + \sum\nolimits_{j=1}^m \alpha_j \|v_j\|_{M(I)}=:\tilde{J}(v,c),
\end{matrix}\right.
\end{align*}
with $S\colon  M(I)^m \times \mathbb{R}^m  \to  L^2(\Omega_T)$ defined by $(v,c) \mapsto L(B(v,c))+Q(y_0,y_1)$.
\subsection{First-Order optimality condition of $(\tilde{P})$ }
In the following a necessary and sufficient first-order optimality condition of $(\tilde{P})$ is presented as well as sparsity results for the derivative of the optimal control.
Let $(\overline v,\overline c)$ be the unique optimal pair. We define the quantities $\overline p=L^*(S(\overline v,\overline c)-y_d)$ and $\overline p_1\in C(\overline I)^m$ by
\[
\overline p_{1,i}:=-\int\nolimits^T_t\int\nolimits_\Omega \overline pg_i ~dx~ ds
\]
for $i=1,\ldots,m$.
\begin{theorem}\label{FirstOrder}
The pair $(\overline v,\overline c)\in M(I)^m\times\mathbb{R}^m$ is an optimal control of $(\tilde{P})$ if and only if
\begin{align}\label{p_1_i_function}
\overline p_{1,i}&\in \alpha_i \partial \|\overline v_i\|_{M(I)}\quad i=1,\ldots, m,\\
\overline p_{1}(0)&=0.
\end{align}
Equivalently it holds
\begin{equation}\label{equiv}
	\langle v-\overline v_i,\overline{p}_{1,i}\rangle_{M(I),C_0(I)}+ \alpha_i \|\overline v_i\|_{M(I)} \leq \alpha_i \|v\|_{M(I)} \quad \forall v\in M(I)~\text{and}~i=1,\ldots,m
\end{equation}	
and $\overline p_1(0)=0$.
\end{theorem}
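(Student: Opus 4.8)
The plan is to use that $(\tilde P)$ is a convex problem and to split the reduced objective as $\tilde J=F+G$, with the smooth convex part $F(v,c):=\tfrac12\|S(v,c)-y_d\|_{L^2(\Omega_T)}^2$ (a quadratic composed with the continuous affine map $S$) and the convex, positively homogeneous part $G(v,c):=\sum_{j=1}^m\alpha_j\|v_j\|_{M(I)}$, which is independent of $c$. Since $F$ is finite and Fréchet differentiable everywhere, a pair $(\overline v,\overline c)$ solves $(\tilde P)$ if and only if the variational inequality
\[
F'(\overline v,\overline c)\big[(v,c)-(\overline v,\overline c)\big]+G(v,c)-G(\overline v,\overline c)\ge 0\qquad\text{for all }(v,c)\in M(I)^m\times\mathbb R^m
\]
holds. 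Formulating optimality in this way is important because $M(I)=C_0(I)^*$ is not reflexive: written with the duality pairing between $M(I)^m\times\mathbb R^m$ and its predual $C_0(I)^m\times\mathbb R^m$, the inequality avoids the bidual of $C_0(I)$ altogether, and this is precisely the role of the predual operator $B^*$.

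Next I would evaluate the derivative of $F$. Since $S(v,c)=L(B(v,c))+Q(y_0,y_1)$ is affine with linear part $LB$, the chain rule together with the definition $\overline p=L^*(S(\overline v,\overline c)-y_d)$ and the predual identity for $B^*$ yield
\[
F'(\overline v,\overline c)[(\delta v,\delta c)]=\big\langle \overline p,\,B(\delta v,\delta c)\big\rangle_{L^2(\Omega_T)}=\big\langle(\delta v,\delta c),\,B^*(\overline p)\big\rangle.
\]
Thus $F'(\overline v,\overline c)$ is represented by $B^*(\overline p)\in C_0(I)^m\times\mathbb R^m$, whose first $m$ components are the functions $w_1',\dots,w_m'$ solving \eqref{eq:timepde} with $q=\overline p$, and whose last $m$ components are the scalars $\int_0^T\!\int_\Omega \overline p g_j\,dx\,dt$.

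I would then insert this representation into the variational inequality and separate the $c$- and $v$-blocks. Varying $c$ over all of $\mathbb R^m$ with $v=\overline v$ (so that $G$ is unaffected) forces the scalar components to vanish, i.e. $\int_0^T\!\int_\Omega \overline p g_j\,dx\,dt=0$ for every $j$, which by the definition of $\overline p_{1,i}$ is exactly $\overline p_1(0)=0$. Feeding this identity into Proposition \ref{Propdiff} cancels the boundary term and leaves $w_j'(t)=\int_t^T\!\int_\Omega \overline p g_j\,dx\,ds=-\overline p_{1,j}(t)$. Varying $v$ with $c=\overline c$ and using the separability of $G$ then decouples the inequality into
\[
\langle v-\overline v_i,\overline p_{1,i}\rangle_{M(I),C_0(I)}+\alpha_i\|\overline v_i\|_{M(I)}\le\alpha_i\|v\|_{M(I)}\qquad\text{for all }v\in M(I),
\]
which is \eqref{equiv}; reading this inequality as the defining property of the convex subdifferential yields the equivalent inclusion $\overline p_{1,i}\in\alpha_i\,\partial\|\overline v_i\|_{M(I)}$ of \eqref{p_1_i_function}, together with $\overline p_1(0)=0$.

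The computation itself is routine; the point that needs care is the non-reflexivity of $M(I)$. One must verify that $F'(\overline v,\overline c)$ is genuinely represented by an element of the predual $C_0(I)^m\times\mathbb R^m$ rather than of $M(I)^*$, so that $\overline p_{1,i}$ is a bona fide continuous function and the subdifferential in \eqref{p_1_i_function} is understood in the predual sense. This is secured precisely by the predual relation for $B^*$ and by the regularity $w_j'\in H^1_0(I)\hookrightarrow C_0(I)$ established earlier, which is why the explicit identification of $w_j'$ in Proposition \ref{Propdiff} is the decisive ingredient.
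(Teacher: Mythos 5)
Your proposal is correct and follows essentially the same route as the paper: the same splitting $\tilde J=F+G$ with $F(v,c)=\tfrac12\|S(v,c)-y_d\|_{L^2(\Omega_T)}^2$, the same computation of the derivative as $B^*L^*(S(\overline v,\overline c)-y_d)=B^*(\overline p)\in C_0(I)^m\times\mathbb R^m$, the same extraction of $\overline p_1(0)=0$ from the $c$-block (where $\partial G$ vanishes), and the same use of Proposition \ref{Propdiff} to identify $w_j'=-\overline p_{1,j}$. The only difference is presentational: where the paper invokes the subdifferential sum rule of Ekeland--T\'emam to write $0\in DF+\partial G$, you use the equivalent variational-inequality characterization of minimizers of a differentiable-plus-convex sum, which amounts to the same first-order condition.
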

\begin{proof}
The proof of Theorem \ref{FirstOrder} is done along the lines of the proof of\\
\cite[Theorem 3.3]{[KCK]}.
By the convexity of $(\tilde{P})$ we have, that $(\overline v,\overline c)\in M(I)^m\times\mathbb{R}^m$ is an optimal control of $(\tilde{P})$ if and only if
\begin{equation*}
0\in \partial \left(
\frac{1}{2}\| S(\overline v,\overline c)-y_d \|_{L^2(\Omega_T)}^2 + \sum\nolimits_{j=1}^m \alpha_j \|\overline v_j\|_{M(I)}
\right)\subseteq (M(I)^m\times \mathbb{R}^m)^*=(M(I)^\ast)^m\times \mathbb R^m.
\end{equation*}
Define the following function $F(v,c):=\frac{1}{2}\| S(v,c)-y_d \|_{L^2(\Omega_T)}^2$ for $(v,c)\in M(0,T)^m\times \mathbb{R}^m$.
Its Gateaux derivative has the form
\[
DF_{(v,c)}(v,c)=B^\ast L^\ast(S(v,c)-y_d)\in C_0(I)^m\times \mathbb R^m
\]
According to the theory of convex analysis, e.g. \cite[Proposition 5.6]{NewEkTe}, we have
\begin{equation}\label{CCKTrick2}
0\in
DF_{(v,c)}(\overline v,\overline c) + \partial\left(\sum\nolimits_{i=1}^m \alpha_i \|\overline v_i\|_{M(I)}
\right).
\end{equation}
Using
\[
\partial\left(\sum\nolimits_{i=1}^m \alpha_i \|\overline v_i\|_{M(I)}
\right)=\begin{pmatrix}
			\left( \alpha_i \partial \| \overline v_i \|_{M(I)}\right)_{i=1}^m \\
			0
			\end{pmatrix}
\subseteq (M(I)^\ast)^m\times \mathbb R^m
\]
and (\ref{CCKTrick2}) as well as Proposition \ref{Propdiff} imply
\begin{equation}
\overline p_{1,i}\in \alpha_i \partial \| \overline v_i \|_{M(I)}\quad\forall i=1,\ldots,m,\quad \overline p_1(0)=0.
\end{equation}
\end{proof}
The following proposition is a consequence of \cite[Proposition 3.2.]{CK14SemiLinEllip}:
\begin{proposition}\label{support_measure_cor}
Let $(\overline v,\overline c)\in M(I)^m\times\mathbb{R}^m$ be an optimal control of $(\tilde{P})$, then for all $i=1,\cdots,m$ and $\overline{p}_{1}=(\overline p_{1,i})_{i=1}^m$ given in (\ref{p_1_i_function}) holds
\begin{itemize}
\item[a)]$\|\overline p_{1,i}\|_{C_{0}(I)}\leq \alpha_i$,
\item[b)]$\int\nolimits_0^T -\frac{\overline p_{1,i}}{\alpha_i}d\overline{v}_i=\int\nolimits_0^T d\vert \overline{v}_i\vert=\|\overline{v}_i\|_{M(I)}$,
\item[c)]$\supp(\overline{v}_i^{\pm})\subseteq \lbrace t \in I \vert \overline p_{1,i}(t)=\pm \alpha_i\rbrace$,
where $\overline v_i=\overline v_i^+-\overline v_i^-$ is the Jordan decomposition of $\overline v_i$.
\end{itemize}
\end{proposition}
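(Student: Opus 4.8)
The plan is to read all three assertions directly off the first-order optimality condition of Theorem \ref{FirstOrder}, which states that $\overline p_{1,i}\in\alpha_i\partial\|\overline v_i\|_{M(I)}$, or equivalently that the variational inequality \eqref{equiv} holds for every $v\in M(I)$. Since $\partial\|\cdot\|_{M(I)}$ is the subdifferential of a norm, the content of the proposition is precisely the standard description of such a subdifferential in the duality between $M(I)$ and its predual $C_0(I)$; the proof then amounts to extracting the three usual consequences (this is the route taken in \cite[Proposition 3.2.]{CK14SemiLinEllip}). I would therefore treat \eqref{equiv} as the single hypothesis and derive b), a), c) in that order.

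First I would establish the complementarity identity that underlies b). Testing \eqref{equiv} with the two admissible choices $v=0$ and $v=2\overline v_i$ produces the opposing inequalities $\langle \overline v_i,\overline p_{1,i}\rangle_{M(I),C_0(I)}\ge \alpha_i\|\overline v_i\|_{M(I)}$ and $\langle \overline v_i,\overline p_{1,i}\rangle_{M(I),C_0(I)}\le \alpha_i\|\overline v_i\|_{M(I)}$, whence $\int_0^T\overline p_{1,i}\dd\overline v_i=\alpha_i\|\overline v_i\|_{M(I)}$. After dividing by $\alpha_i$ and accounting for the sign convention relating \eqref{p_1_i_function} to the pairing in \eqref{equiv}, this is exactly assertion b).

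Next, substituting this identity back into \eqref{equiv} cancels the two terms involving $\overline v_i$ and leaves $\int_0^T\overline p_{1,i}\dd v\le\alpha_i\|v\|_{M(I)}$ for every $v\in M(I)$. Taking the supremum over measures of unit total variation—concretely over signed Dirac masses $\pm\delta_t$ placed at interior points $t\in I$—yields $\|\overline p_{1,i}\|_{C_0(I)}\le\alpha_i$, which is a). Here I rely on $\overline p_{1,i}\in C(\overline I)$ and on the isometric representation $\sup_{\|v\|_{M(I)}\le 1}\int_0^T\phi\dd v=\|\phi\|_{C_0(I)}$. Finally, c) follows by combining a) with the complementarity identity and the Jordan decomposition $\overline v_i=\overline v_i^+-\overline v_i^-$, $|\overline v_i|=\overline v_i^++\overline v_i^-$. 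Rewriting the identity as
\begin{equation*}
\int_0^T(\alpha_i-\overline p_{1,i})\dd\overline v_i^+ + \int_0^T(\alpha_i+\overline p_{1,i})\dd\overline v_i^- = 0,
\end{equation*}
I would observe that by a) both integrands $\alpha_i\mp\overline p_{1,i}$ are continuous and nonnegative while $\overline v_i^\pm$ are nonnegative measures, so each integral vanishes separately; a continuous nonnegative function integrating to zero against a nonnegative measure must vanish on that measure's support, giving $\overline p_{1,i}=\alpha_i$ on $\supp(\overline v_i^+)$ and $\overline p_{1,i}=-\alpha_i$ on $\supp(\overline v_i^-)$.

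The only genuinely delicate points are bookkeeping rather than conceptual. One must track the sign convention linking \eqref{p_1_i_function} and the pairing in \eqref{equiv} carefully, so that the $\pm$ in c) matches the $\pm$ in the support sets and is consistent with the displayed form of b); and one must justify the passage from the $\overline v_i^\pm$-almost-everywhere vanishing of the continuous functions $\alpha_i\mp\overline p_{1,i}$ to their vanishing on the full closed support $\supp(\overline v_i^\pm)$, which is where the continuity of $\overline p_{1,i}$ is essential.
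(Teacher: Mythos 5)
Your argument is correct in substance, and it is precisely the standard argument: the paper itself gives no proof of Proposition \ref{support_measure_cor}, only the citation to \cite[Proposition 3.2]{CK14SemiLinEllip}, and what you write out (testing \eqref{equiv} with $v=0$ and $v=2\overline v_i$ to obtain complementarity, re-inserting that identity to get the dual-norm bound a), then splitting along the Jordan decomposition to get c)) is exactly the argument that citation stands for. Your closing observation is also the right one: the passage from $\overline v_i^{\pm}$-a.e.\ vanishing of the nonnegative functions $\alpha_i\mp\overline p_{1,i}$ to their vanishing on the closed supports is where the continuity of $\overline p_{1,i}$ is genuinely used.

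One point, however, should not be filed under ``bookkeeping'': the identity your two test choices actually produce is $\int_0^T\overline p_{1,i}\,d\overline v_i=+\alpha_i\|\overline v_i\|_{M(I)}$, and no sign convention turns this into the printed b), which asserts $\int_0^T-\tfrac{\overline p_{1,i}}{\alpha_i}\,d\overline v_i=\|\overline v_i\|_{M(I)}$; the two statements are negatives of each other. Indeed, the printed b) is incompatible with the printed c) (and with Theorem \ref{FirstOrder}) unless $\overline v_i=0$, since c) forces $\int_0^T\overline p_{1,i}\,d\overline v_i=+\alpha_i\|\overline v_i\|_{M(I)}$. The discrepancy is an artifact of transcription from \cite{CK14SemiLinEllip}: there the optimality condition reads $-\bar\varphi/\alpha\in\partial\|\bar u\|_{M}$, so the adjoint enters with the opposite sign, whereas here \eqref{p_1_i_function} states $+\overline p_{1,i}\in\alpha_i\partial\|\overline v_i\|_{M(I)}$; the sign was flipped correctly in c) but not in b). Your own proof confirms this, since your derivation of c) proceeds from the positive-sign identity and lands on c) with exactly the printed signs. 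So the honest conclusion of your step for b) is that b) holds with $+\overline p_{1,i}/\alpha_i$ in place of $-\overline p_{1,i}/\alpha_i$ --- i.e., your derivation is right and the printed sign is a slip --- rather than that some convention reconciles the two.
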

\begin{remark}\label{fsa}
Let us note that the boundary property of $\overline p_1$, i.e. $\overline p_1(0)=\overline p_1(T)=0$, and the continuity of $ \overline p_1$ imply with Proposition \ref{support_measure_cor}, c), that there exists a $\epsilon_i>0$ such that $dist(\supp(\overline{v}_i^{\pm}),\lbrace 0, T\rbrace)>\epsilon_i.$
\end{remark}
\section{The Variationally Discretized Problem}\label{sec:varprob}
In this section we introduce a discretized version of $(\tilde{P})$ and discuss its properties. We use the concept of variational discretization in which the control is not discretized. In particular, we consider the problem $(\tilde{P}_{\vartheta}^{\text{semi}})$:
\begin{align*}
(\tilde{P}_{\vartheta}^{\text{semi}})\left\lbrace\begin{matrix}
& \min_{
	\begin{matrix}
	v\in M(I)^m \\
	c\in \mathbb{R}^m
	\end{matrix}}
\frac{1}{2}\| S_{\vartheta}(v,c)-y_d \|_{L^2(\Omega_T)}^2 + \sum\nolimits_{j=1}^m \alpha_j \|v_j\|_{M(I)}=:J_{\vartheta}(v,c)
\end{matrix}\right.
\end{align*}
with $S_{\vartheta}\colon  M(I)^m \times \mathbb{R}^m  \longrightarrow  L^2(\Omega_T)$ defined by
$
(v,c) \mapsto  L_{\vartheta}(B(v,c))+Q_{\vartheta}(y_0,y_1).
$
Here $L_{\vartheta}\colon  L^2(\Omega_T)  \rightarrow   L^2(\Omega_T)$ is defined by $f\mapsto y_\vartheta(f)$, where $y_\vartheta(f)$ solves \eqref{discreteSolutdefeq} for a source f and $(y_0,y_1)=(0,0)$. The operator $Q_{\vartheta}\colon  H^1_0(\Omega)\times L^2(\Omega)  \rightarrow   L^2(\Omega_T)$ is defined by $(y_0,y_1)\mapsto y_\vartheta(y_0,y_1)$, where $y_\vartheta(y_0,y_1)$ solves \eqref{discreteSolutdefeq} with $(y_0,y_1)$ as initial datum and $f=0$.
\begin{remark}\label{AdjDiscOp}
We can represent the adjoint of $L_{\vartheta}$ in the form  $w\mapsto L_{\vartheta}^*(w)(t,x)=L_{\vartheta}(w\circ\widetilde{\phi})(\widetilde{\phi}(t,x))$ with $\widetilde{\phi}(t,x)=(T-t,x)$, and $w\in L^2(\Omega_T)$. This is true since $L_{\vartheta}(f)(0)=0$ and $L_{\vartheta}^*(w)(T)=0$ and thus $L_{\vartheta}(f)$ and $L_{\vartheta}^*(w)$ can be used in \eqref{discreteSolutdefeq} as test functions for the forwards and backwards equation.  Hence, Theorem \ref{AprioriEsti} and Lemma \ref{LtauhAbsch} are valid for $L_{\vartheta}^*(w)$ as well.
\end{remark}
\begin{theorem}
The problem $(\tilde{P}_{\vartheta}^{\text{semi}})$ has a solution in $M(I)^m \times \mathbb{R}^m$.
\end{theorem}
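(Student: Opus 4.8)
The plan is to apply the direct method of the calculus of variations. Since $J_{\vartheta}(v,c)\ge 0$ for every $(v,c)$, the infimum $j^\ast:=\inf J_{\vartheta}$ is finite and nonnegative, and I fix a minimizing sequence $(v_k,c_k)_k\subset M(I)^m\times\mathbb R^m$ with $J_{\vartheta}(v_k,c_k)\to j^\ast$. The penalty term immediately controls the measure part: for $k$ large, $\sum_{j=1}^m\alpha_j\|v_{j,k}\|_{M(I)}\le J_{\vartheta}(v_k,c_k)\le j^\ast+1$, and since all $\alpha_j>0$ the sequence $(v_k)_k$ is bounded in $M(I)^m$.

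The bound on $c_k$ is the delicate point, because $c$ does not appear in the penalty and must be controlled through the data term alone. Writing $B(v,c)=B(v,0)+\sum_{j=1}^m c_j g_j$ by linearity and setting $T_{\vartheta}c:=L_{\vartheta}(\sum_{j=1}^m c_j g_j)$, I have $S_{\vartheta}(v,c)=L_{\vartheta}(B(v,0))+T_{\vartheta}c+Q_{\vartheta}(y_0,y_1)$. The functional depends on $c$ only through $S_{\vartheta}$, hence is invariant under translating $c$ by any element of the finite-dimensional space $\mathcal K:=\{c\in\mathbb R^m: T_{\vartheta}c=0\}$; I may therefore replace each $c_k$ by its orthogonal projection onto $\mathcal K^\perp$ without changing the value $J_{\vartheta}(v_k,c_k)$. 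On the finite-dimensional space $\mathcal K^\perp$ the linear map $T_{\vartheta}$ is injective, hence bounded below. From $\|S_{\vartheta}(v_k,c_k)-y_d\|_{L^2(\Omega_T)}^2\le 2(j^\ast+1)$, the boundedness of $(v_k)_k$, and the continuity of $L_{\vartheta}$ from Lemma \ref{LtauhAbsch} together with the continuity of $B$, the term $L_{\vartheta}(B(v_k,0))$ stays bounded in $L^2(\Omega_T)$; since $Q_{\vartheta}(y_0,y_1)$ and $y_d$ are fixed, $T_{\vartheta}c_k$ is bounded, and the lower bound on $T_{\vartheta}$ over $\mathcal K^\perp$ forces $(c_k)_k$ to be bounded.

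I then extract subsequences (not relabeled): by the Banach--Alaoglu theorem, using $M(I)=C_0(I)^\ast$, one has $v_k\tostar \overline v$ weakly-$\ast$ in $M(I)^m$, while $c_k\to\overline c$ in $\mathbb R^m$ by finite dimensionality. The crucial step is the sequential weak-$\ast$-to-weak continuity of $S_{\vartheta}$: for fixed $\phi\in L^2(\Omega_T)$ the functions $t\mapsto v_{j,k}([0,t])$ are uniformly bounded by the total-variation bound and converge at every continuity point of $t\mapsto \overline v_j([0,t])$, hence almost everywhere, so dominated convergence gives $B(v_k,c_k)\rightharpoonup B(\overline v,\overline c)$ weakly in $L^2(\Omega_T)$; applying the bounded linear operator $L_{\vartheta}$ and adding the fixed term $Q_{\vartheta}(y_0,y_1)$ yields $S_{\vartheta}(v_k,c_k)\rightharpoonup S_{\vartheta}(\overline v,\overline c)$. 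Finally I combine two lower semicontinuities: the $L^2$-norm is weakly lower semicontinuous, so $\|S_{\vartheta}(\overline v,\overline c)-y_d\|_{L^2(\Omega_T)}\le\liminf_k\|S_{\vartheta}(v_k,c_k)-y_d\|_{L^2(\Omega_T)}$, and the total variation norm is weakly-$\ast$ lower semicontinuous, so $\|\overline v_j\|_{M(I)}\le\liminf_k\|v_{j,k}\|_{M(I)}$. Hence $J_{\vartheta}(\overline v,\overline c)\le\liminf_k J_{\vartheta}(v_k,c_k)=j^\ast$, and $(\overline v,\overline c)$ is a minimizer.

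I expect the main obstacle to be the coercivity in $c$: unlike the measure part it is not supplied by the penalty, so one must first quotient out the kernel $\mathcal K$ of $T_{\vartheta}$ before invoking finite-dimensional coercivity; this is also the reason only existence, and not uniqueness, can be expected for the discrete problem. The second point requiring care is the weak-$\ast$-to-weak continuity of $B$, which cannot be obtained by testing the distribution functions directly against $C_0(I)$ and instead relies on the uniform bound plus the dominated-convergence argument sketched above.
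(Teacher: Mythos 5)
Your overall architecture---direct method, penalty term controlling the measure components, quotienting out the kernel of $c\mapsto T_\vartheta c=L_\vartheta\bigl(\sum_{j}c_jg_j\bigr)$ to obtain coercivity in $c$, then compactness plus lower semicontinuity---is sound, and it is essentially the route the paper takes implicitly: its proof of this theorem is a pointer to Theorem \ref{ExistenceSoluti}, which in turn defers to the direct-method argument of \cite[Theorem 3.1]{[KCK]}. Your kernel/projection observation is in fact a refinement worth keeping: at this point of the paper nothing excludes a nontrivial kernel of $T_\vartheta$ for a fixed $\vartheta$ (positive definiteness of the discrete Gramian is only imposed much later, in Lemma \ref{constantsConvRate}), so factoring it out is exactly what makes the coercivity in $c$ airtight, and it correctly explains why only existence, not uniqueness, is claimed at the discrete level.

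There is, however, a genuine flaw in the justification of your key continuity step. You claim that weak-$\ast$ convergence of $v_{j,k}$ to $\overline v_j$ in $M(I)=C_0(I)^\ast$ forces $v_{j,k}([0,t])\to\overline v_j([0,t])$ at every continuity point of the limit distribution function. This is false for signed measures on the \emph{open} interval: take $v_k=\delta_{1/k}$. Then $v_k$ converges weakly-$\ast$ to $0$, because every $\psi\in C_0(I)$ vanishes at the endpoints, yet $v_k([0,t])=1$ for all $t\geq 1/k$, so the distribution functions converge everywhere on $(0,T]$ to the constant $1$, while the limit measure has distribution function $0$; every point is a continuity point of the limit, and convergence fails a.e. (An interior cancellation example is $v_k=\delta_{1/2+1/k}-\delta_{1/2-1/k}$, where $v_k([0,1/2])=-1$ for all $k$.) Mass escaping to the endpoints shifts distribution functions by constants that the duality with $C_0(I)$ cannot detect, so your dominated-convergence argument, as justified, does not go through. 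The step is repairable: the functions $F_{j,k}(t)=v_{j,k}([0,t])$ have uniformly bounded variation and sup-norm, so by Helly's selection theorem a subsequence converges pointwise to some $G_j$ of bounded variation; testing with $\psi\in C_c^\infty(I)$ and Fubini gives $\int_0^T F_{j,k}\psi'\,dt=-\int_0^T\psi\,dv_{j,k}$, whence $\int_0^T (G_j-F_j)\psi'\,dt=0$ and $G_j=F_j+\mathrm{const}$ a.e. Since $B$ subtracts the time mean of the distribution function, this constant cancels, and dominated convergence then yields $B(v_k,c_k)\to B(\overline v,\overline c)$ even \emph{strongly} in $L^2(\Omega_T)$ along the subsequence, which is more than you need. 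Equivalently, one can pass to the $BV$ representatives \eqref{repBV} and invoke weak-$\ast$ compactness of $BV(I)$, which is the mechanism the paper uses in Theorem \ref{weak*conv_reg_prob1}. With that replacement, the rest of your proof (weak and weak-$\ast$ lower semicontinuity of the two terms of $J_\vartheta$) is correct and complete.
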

\begin{proof}
The existence of an optimal control for $(\tilde{P}_{\vartheta}^{\text{semi}})$ can be similarly shown as in the proof of Theorem \ref{ExistenceSoluti}.
\end{proof}
Note, that a BV-representation of the solutions $(\overline v, \overline c)$, $(\overline v_\vartheta,\overline c_{\vartheta})$ of $(\tilde{P})$, respectively $(\tilde{P}_{\vartheta}^{\text{semi}})$ are defined by
\begin{equation}\label{repBV}
\overline u(t):=\int\nolimits_0^t d\overline v(s)-\frac1T\int\nolimits_0^T\int\nolimits_0^t d\overline v(s)~dt+\overline c,\quad \text{and}\quad
\overline u_{\vartheta}(t):=\int\nolimits_0^t d\overline v_{\vartheta}(s)-\frac1T\int\nolimits_0^T\int\nolimits_0^t d\overline v_{\vartheta}(s)~dt+\overline c_{\vartheta}.
\end{equation}
Next we define the quantities $\overline p_\vartheta=L^*_\vartheta(S_\vartheta(\overline v_\vartheta,\overline c_\vartheta)-y_d)$ and
\[
\overline p_{1,\vartheta,j}:=-\int\nolimits^T_t\int\nolimits_\Omega \overline p_\vartheta g_j ~dx~ ds\quad \text{for}\quad j=1,\ldots,m,
\]
which is continuously differentiable and piecewise quadratic in time.
\begin{theorem}\label{subDifTheo}
The pair $(\overline v_{\vartheta},\overline c_{\vartheta})\in M(I)^m\times\mathbb{R}^m$ is a optimal control of $(\tilde{P}_{\vartheta}^{\text{semi}})$ if and only if
\begin{align}\label{FirstOrderThm}
\overline p_{1,\vartheta,i}&\in \alpha_i \partial \|\overline v_{\vartheta,i}\|_{M(I)}\quad i=1,\ldots, m,\\
\overline p_{1,\vartheta}(0)&=0.
\end{align}
Equivalently it holds
\begin{equation}\label{equiv2}
	\langle v-\overline v_i,\overline p_{1,\vartheta,i}\rangle_{M(I),C_0(I)}+\alpha_i \|\overline v_i\|_{M(I)} \leqslant \alpha_i \|v\|_{M(I)} \quad \forall v\in M(I),~i=1,\ldots,m,
\end{equation}	
and $\overline p_{1,\vartheta}(0)=0$.
\end{theorem}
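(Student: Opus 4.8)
The plan is to follow the proof of Theorem \ref{FirstOrder} essentially verbatim, exploiting that the passage from $(\tilde{P})$ to $(\tilde{P}_\vartheta^{\mathrm{semi}})$ only replaces the continuous solution operators $L,Q$ by their discrete counterparts $L_\vartheta,Q_\vartheta$, while the control-to-source operator $B$ and its predual $B^*$ are untouched. First I would observe that $(\tilde{P}_\vartheta^{\mathrm{semi}})$ is again convex: by Lemma \ref{LtauhAbsch} the operator $L_\vartheta\colon L^2(\Omega_T)\to L^2(\Omega_T)$ is bounded and linear, hence $(v,c)\mapsto S_\vartheta(v,c)=L_\vartheta(B(v,c))+Q_\vartheta(y_0,y_1)$ is affine and the data-fitting term $F_\vartheta(v,c):=\tfrac12\|S_\vartheta(v,c)-y_d\|_{L^2(\Omega_T)}^2$ is convex and Gateaux differentiable, while the measure-norm terms are convex. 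Consequently $(\overline v_\vartheta,\overline c_\vartheta)$ is optimal if and only if $0\in\partial J_\vartheta(\overline v_\vartheta,\overline c_\vartheta)\subseteq (M(I)^\ast)^m\times\mathbb R^m$.

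Next I would compute the derivative of $F_\vartheta$. Since $B^*$ is the predual of $B$ and $L_\vartheta^*$ is the genuine adjoint of $L_\vartheta$ on $L^2(\Omega_T)$ (see Remark \ref{AdjDiscOp}), the chain rule yields $DF_\vartheta(\overline v_\vartheta,\overline c_\vartheta)=B^*L_\vartheta^*(S_\vartheta(\overline v_\vartheta,\overline c_\vartheta)-y_d)=B^*\overline p_\vartheta\in C_0(I)^m\times\mathbb R^m$, which crucially lies in the predual. Because $F_\vartheta$ is finite and continuous everywhere, the subdifferential sum rule (as in \cite[Proposition 5.6]{NewEkTe}) applies and gives $0\in B^*\overline p_\vartheta+\partial\bigl(\sum_{i=1}^m\alpha_i\|\overline v_{\vartheta,i}\|_{M(I)}\bigr)$. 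The subdifferential of the norm term has the block structure $\bigl((\alpha_i\partial\|\overline v_{\vartheta,i}\|_{M(I)})_{i=1}^m,\,0\bigr)$, so the $\mathbb R^m$-component of the optimality relation reads $\int_0^T\int_\Omega\overline p_\vartheta g_j\,dx\,dt=0$ for each $j$, which is precisely $\overline p_{1,\vartheta}(0)=0$.

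With this mean-value condition in hand, I would insert the explicit form of the first $m$ components of $B^*\overline p_\vartheta$ from Proposition \ref{Propdiff}: the term $\tfrac{(t-T)}{T}\int_0^T\int_\Omega\overline p_\vartheta g_j$ drops out, leaving exactly $-w'_j=\overline p_{1,\vartheta,j}$. Hence the measure-component of the inclusion becomes $\overline p_{1,\vartheta,i}\in\alpha_i\partial\|\overline v_{\vartheta,i}\|_{M(I)}$, which is \eqref{FirstOrderThm}. Finally, the equivalent variational form \eqref{equiv2} follows by unfolding the definition of the subdifferential of $\|\cdot\|_{M(I)}$ in the $C_0(I)$--$M(I)$ duality.

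The argument presents no genuinely new obstacle over the continuous case; the only point requiring care is that every step lives in the predual pairing of $M(I)^m\times\mathbb R^m$ with $C_0(I)^m\times\mathbb R^m$. One must therefore verify that $DF_\vartheta$ indeed lands in $C_0(I)^m\times\mathbb R^m$, which is guaranteed by the mapping properties of $B^*$ established earlier, and that $L_\vartheta^*$ is the genuine adjoint of $L_\vartheta$ on $L^2(\Omega_T)$, which is the content of Remark \ref{AdjDiscOp}. These two facts are exactly what makes the continuous proof transfer unchanged to the discretized setting.
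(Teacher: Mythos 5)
Your proposal is correct and takes essentially the same approach as the paper: the paper's own proof of Theorem \ref{subDifTheo} consists of the single remark that it is similar to the proof of Theorem \ref{FirstOrder}, and your write-up carries out exactly that transfer, replacing $L$, $Q$, $\overline p$ by $L_\vartheta$, $Q_\vartheta$, $\overline p_\vartheta$ while reusing the unchanged operators $B$, $B^*$, Proposition \ref{Propdiff}, and the sum rule of \cite[Proposition 5.6]{NewEkTe}. The points you single out for verification—that $DF_\vartheta$ lands in $C_0(I)^m\times\mathbb{R}^m$ and that $L_\vartheta^*$ is the genuine $L^2(\Omega_T)$-adjoint of $L_\vartheta$ (Remark \ref{AdjDiscOp})—are precisely what the paper leaves implicit, and your sign bookkeeping (the mean-value condition killing the $\frac{t-T}{T}$ term, giving $-w_j'=\overline p_{1,\vartheta,j}$) is accurate.
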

\begin{proof} The proof is similar to Theorem \ref{FirstOrder}.\end{proof}
\begin{remark}\label{RauteRemark}
Due to Theorem \ref{subDifTheo}, we can show that Proposition \ref{support_measure_cor} holds analogiously for $(\tilde{P}^{semi}_\vartheta)$.
\end{remark}
\section{A Priori Error Estimates}\label{sec:errorest}
In this section error estimates of problem $(\tilde{P}_{\vartheta}^{\text{semi}})$ for the optimal control, optimal state and optimal cost functional value are presented. Under specific assumptions, we proof optimal rates for the optimal control, state and cost.
For reason of convenience, the following notation is introduced.
For an optimal control $(\overline{v}_{\vartheta},\overline{c}_{\vartheta}) \in  M(I)^m\times\mathbb{R}^m$ of $(\widetilde{P}_{\vartheta}^{\text{semi}})$ and the optimal control $(\overline{v},\overline{c})\in M(I)^m\times\mathbb{R}^m$ of $(\widetilde{P})$ we introduce the corresponding optimal states by
$
\overline{y}_{\vartheta}:=S_{\vartheta}(\overline{v}_{\vartheta},\overline{c}_{\vartheta})
$ and
$
\overline{y}:=S(\overline{v},\overline{c}).
$
Further, we define the mixed state by $\hat{y}_{\vartheta}:=L_{\vartheta}(B(\overline{v},\overline{c}))+Q_{\vartheta}(y_0,y_1)$.
The mixed adjoint state is chosen as $\hat{p}_{\vartheta}:=L_{\vartheta}^*(\overline{y}-y_d)$.
In the proofs of following the Lemmata and  Theorem, we use similar steps as in the proof of \cite[Theorem 4.4]{[VexPie]}.
\begin{lemma}\label{lemmaImply1}
There holds
\begin{equation}\label{starIneqRate}
\left\langle \overline p_{1,\vartheta}- \overline p_{1},\overline{v}_{\vartheta}-\overline{v} \right\rangle\leq 0
\end{equation}
with $(\overline{v},\overline{c})$ as the optimal control of $(\tilde{P})$ and $(\overline{v}_{\vartheta},\overline{c}_{\vartheta})$ as a solution of $(\tilde{P}_{\vartheta}^{\text{semi}})$.
\end{lemma}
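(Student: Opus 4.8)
The plan is to obtain \eqref{starIneqRate} purely from the two first-order optimality systems by a cross-testing (monotonicity) argument, treating each index $i\in\{1,\dots,m\}$ separately and summing at the end. Both $(\tilde{P})$ and $(\tilde{P}_{\vartheta}^{\mathrm{semi}})$ decouple in $i$ through the variational inequalities of Theorem \ref{FirstOrder} and Theorem \ref{subDifTheo}, and the functions $\overline p_{1,i},\overline p_{1,\vartheta,i}$ lie in $C_0(I)$ while the measures $\overline v_i,\overline v_{\vartheta,i}$ lie in $M(I)$, so every pairing $\langle\cdot,\cdot\rangle_{M(I),C_0(I)}$ that appears is well defined (this membership is recorded in Proposition \ref{support_measure_cor} and Theorem \ref{subDifTheo}).

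First I would write down \eqref{equiv} for the continuous optimizer and \eqref{equiv2} for the discrete one, the latter understood at its own optimal measure $\overline v_{\vartheta,i}$. Then I test cross-wise: I insert the discrete optimal measure $v=\overline v_{\vartheta,i}$ into the continuous inequality \eqref{equiv}, and the continuous optimal measure $v=\overline v_i$ into the discrete inequality \eqref{equiv2}. This produces
\[
\langle \overline v_{\vartheta,i}-\overline v_i,\overline p_{1,i}\rangle+\alpha_i\|\overline v_i\|_{M(I)}\le \alpha_i\|\overline v_{\vartheta,i}\|_{M(I)}
\]
together with
\[
\langle \overline v_i-\overline v_{\vartheta,i},\overline p_{1,\vartheta,i}\rangle+\alpha_i\|\overline v_{\vartheta,i}\|_{M(I)}\le \alpha_i\|\overline v_i\|_{M(I)}.
\]

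Adding the two inequalities, the four total-variation terms cancel exactly, and after using $\langle \overline v_i-\overline v_{\vartheta,i},\overline p_{1,\vartheta,i}\rangle=-\langle \overline v_{\vartheta,i}-\overline v_i,\overline p_{1,\vartheta,i}\rangle$ one is left with
\[
\langle \overline v_{\vartheta,i}-\overline v_i,\overline p_{1,i}-\overline p_{1,\vartheta,i}\rangle\le 0.
\]
Rearranging the measure pairing into the form appearing in \eqref{starIneqRate} and summing over $i=1,\dots,m$, with $\overline p_1=(\overline p_{1,i})_{i=1}^m$ and $\overline p_{1,\vartheta}=(\overline p_{1,\vartheta,i})_{i=1}^m$, then yields the assertion. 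I do not expect a genuine obstacle here, since the argument is nothing more than the monotonicity of the subdifferential of $\|\cdot\|_{M(I)}$ unfolded through the two variational inequalities; the one point that really requires care is the sign bookkeeping when the two separate measure pairings are regrouped into a single pairing of $\overline p_{1,\vartheta}-\overline p_1$ against $\overline v_\vartheta-\overline v$ with the correct orientation.
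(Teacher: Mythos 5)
Your route is the same as the paper's: the paper's entire proof of Lemma \ref{lemmaImply1} is the phrase ``monotonicity of the subdifferential,'' and your cross-testing of \eqref{equiv} and \eqref{equiv2} is exactly that monotonicity argument unfolded (you also correctly read \eqref{equiv2} as being centered at $\overline v_{\vartheta,i}$, fixing a typo in the paper). The two cross-tested inequalities and their sum are fine. The problem is the very last step: what you have actually proven is
\begin{equation*}
\left\langle \overline v_{\vartheta,i}-\overline v_i,\,\overline p_{1,i}-\overline p_{1,\vartheta,i}\right\rangle\le 0,
\qquad\text{equivalently}\qquad
\left\langle \overline p_{1,\vartheta}-\overline p_{1},\,\overline v_{\vartheta}-\overline v\right\rangle\ge 0
\end{equation*}
after summation over $i$. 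The duality pairing is a single bilinear quantity, so ``rearranging'' the order of its arguments cannot change its sign; your computation therefore establishes the \emph{reverse} of \eqref{starIneqRate} as literally stated, and the sentence ``rearranging \dots then yields the assertion'' is precisely where the proof breaks.

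What you have run into is a sign inconsistency in the paper rather than a defect of your idea. With the paper's definition $\overline p_{1,i}(t)=-\int_t^T\int_\Omega \overline p\,g_i\,dx\,ds$ and the inclusions \eqref{p_1_i_function} and \eqref{FirstOrderThm}, monotonicity genuinely gives $\ge 0$, so \eqref{starIneqRate} cannot hold with that convention (it would force the pairing to vanish identically, contradicting its later identification with $\|\overline y_\vartheta-\overline y\|_{L^2(\Omega_T)}^2+\dots$). The inequality that is actually needed, and used, in the proof of Lemma \ref{firstEqError} is
\begin{equation*}
\left(\overline p_\vartheta-\overline p,\,(\overline u_\vartheta-\overline u)\cdot g\right)_{L^2(\Omega_T)}\le 0,
\end{equation*}
and an integration by parts in time (the boundary terms vanish because $\overline p_1(T)=\overline p_{1,\vartheta}(T)=0$ by definition and $\overline p_1(0)=\overline p_{1,\vartheta}(0)=0$ from the two optimality systems) shows this quantity equals $-\langle \overline p_{1,\vartheta}-\overline p_1,\overline v_\vartheta-\overline v\rangle$; it is $\le 0$ precisely because of the $\ge 0$ you derived. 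In other words, \eqref{starIneqRate} is correct only if $\overline p_1,\overline p_{1,\vartheta}$ are understood with the opposite sign, namely as the measure components of $B^*\overline p$ and $B^*\overline p_\vartheta$ --- the convention silently used in the first equality of the proof of Lemma \ref{firstEqError}. To repair your write-up, either prove and state the $\ge 0$ inequality and carry the compensating sign into Lemma \ref{firstEqError}, or declare the $B^*$-sign convention for $p_1$ at the outset; as it stands, the claimed conclusion does not follow from your (otherwise correct) computation.
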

\begin{proof}
Inequality (\ref{starIneqRate}) follows from monotonicity of the subdifferential.
\end{proof}
\begin{lemma}\label{firstEqError}
Consider optimal control $(\overline v,\overline c)$ of $(\tilde P)$,  and $(\overline v_\vartheta,\overline c_\vartheta)$ of $(\tilde{P}^{semi}_{\vartheta})$, as well as their BV-representations $\overline u$, and $\overline u_\vartheta$. For the optimal states $\overline{y}$ and $\overline{y}_{\vartheta}$ of problem $(\widetilde{P})$, respectively $(\tilde{P}^{semi}_{\vartheta})$, we have
\begin{equation}\label{ineq1}
\|\overline{y}_{\vartheta}-\overline{y}\|_{L^2(\Omega_T)} \leq c\|\overline{y}-\hat{y}_{\vartheta}\|_{L^2(\Omega_T)}+c\|\overline{u}_{\vartheta}-\overline{u}\|_{L^1(I)^m}^{\frac{1}{2}}\|\overline{p} -   \hat{p}_{\vartheta}\|_{L^\infty(I;L^2(\Omega))}^\frac{1}{2}
\end{equation}
with a constant $c>0$ depending on $g$.
\end{lemma}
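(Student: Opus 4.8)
The plan is to test the squared state error against itself and split off the part that is controlled by the optimality system. Write $e:=\overline y_\vartheta-\overline y$, $\delta v:=\overline v_\vartheta-\overline v$ and $\delta c:=\overline c_\vartheta-\overline c$. Since $\hat y_\vartheta=L_\vartheta(B(\overline v,\overline c))+Q_\vartheta(y_0,y_1)=S_\vartheta(\overline v,\overline c)$, linearity of $S_\vartheta$, $L_\vartheta$ and $B$ yields $\overline y_\vartheta-\hat y_\vartheta=L_\vartheta(B(\delta v,\delta c))$. I would therefore decompose
\[
\|e\|_{L^2(\Omega_T)}^2=\langle e,\overline y_\vartheta-\hat y_\vartheta\rangle_{L^2(\Omega_T)}+\langle e,\hat y_\vartheta-\overline y\rangle_{L^2(\Omega_T)}
\]
and treat the two summands separately. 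The second one is immediately bounded by $\|e\|_{L^2(\Omega_T)}\|\hat y_\vartheta-\overline y\|_{L^2(\Omega_T)}$ via Cauchy--Schwarz, which will produce the $\|\overline y-\hat y_\vartheta\|_{L^2(\Omega_T)}$ contribution after a Young step.

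For the first summand I would pass to the adjoint, $\langle e,L_\vartheta(B(\delta v,\delta c))\rangle=\langle L_\vartheta^*e,B(\delta v,\delta c)\rangle$, and note that by the definitions of $\overline p_\vartheta$ and $\hat p_\vartheta$ one has $L_\vartheta^*e=L_\vartheta^*(\overline y_\vartheta-\overline y)=\overline p_\vartheta-\hat p_\vartheta$. I then split $\overline p_\vartheta-\hat p_\vartheta=(\overline p_\vartheta-\overline p)+(\overline p-\hat p_\vartheta)$ inside the pairing and use the predual identity $\langle B(\delta v,\delta c),q\rangle_{L^2(\Omega_T)}=\langle(\delta v,\delta c),B^*q\rangle$. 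For $q=\overline p_\vartheta-\overline p$, Proposition~\ref{Propdiff} together with the boundary conditions $\overline p_1(0)=\overline p_{1,\vartheta}(0)=0$ from Theorems~\ref{FirstOrder} and~\ref{subDifTheo} identifies this contribution (up to sign) with $\langle\overline p_{1,\vartheta}-\overline p_1,\overline v_\vartheta-\overline v\rangle$, which has the sign supplied by Lemma~\ref{lemmaImply1} and is therefore discarded when passing to an upper bound.

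It then remains to estimate $\langle B(\delta v,\delta c),\overline p-\hat p_\vartheta\rangle_{L^2(\Omega_T)}$. Here I would use the structural identity $B(\delta v,\delta c)=\sum_{j=1}^m(\overline u_{\vartheta,j}-\overline u_j)g_j$, which follows from \eqref{B_red_to_L_2} and the BV-representations \eqref{repBV} by linearity. Inserting this, applying Fubini to separate the time and space integrals, Cauchy--Schwarz on $\Omega$ against $g_j$, and finally Hölder in time gives
\[
\langle B(\delta v,\delta c),\overline p-\hat p_\vartheta\rangle_{L^2(\Omega_T)}\le \Big(\max_j\|g_j\|_{L^2(\Omega)}\Big)\,\|\overline u_\vartheta-\overline u\|_{L^1(I)^m}\,\|\overline p-\hat p_\vartheta\|_{L^\infty(I;L^2(\Omega))},
\]
which is where the $g$-dependent constant and the product $\|\overline u_\vartheta-\overline u\|_{L^1(I)^m}\|\overline p-\hat p_\vartheta\|_{L^\infty(I;L^2(\Omega))}$ enter.

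Collecting the three contributions gives $\|e\|_{L^2(\Omega_T)}^2\le c\,\|\overline u_\vartheta-\overline u\|_{L^1(I)^m}\|\overline p-\hat p_\vartheta\|_{L^\infty(I;L^2(\Omega))}+\|e\|_{L^2(\Omega_T)}\|\hat y_\vartheta-\overline y\|_{L^2(\Omega_T)}$. I would then absorb the last term with Young's inequality, namely $\|e\|\|\hat y_\vartheta-\overline y\|\le\frac12\|e\|^2+\frac12\|\hat y_\vartheta-\overline y\|^2$, and take square roots using $\sqrt{a+b}\le\sqrt a+\sqrt b$ to arrive at \eqref{ineq1}. I expect the middle step to be the main obstacle: correctly rewriting the abstract pairing $\langle(\delta v,\delta c),B^*(\overline p_\vartheta-\overline p)\rangle$ as the subdifferential-monotonicity quantity of Lemma~\ref{lemmaImply1} requires the $B$--$B^*$ duality, Proposition~\ref{Propdiff}, and the boundary conditions to line up exactly; by contrast the $L^1$--$L^\infty(L^2)$ estimate and the Young/square-root bookkeeping are routine.
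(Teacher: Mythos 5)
Your proposal is correct and is essentially the paper's own argument traversed in the opposite direction: the paper starts from the monotonicity inequality of Lemma \ref{lemmaImply1}, rewrites it via the $B$--$B^*$ duality (using $\overline p_1(0)=\overline p_{1,\vartheta}(0)=0$) and the adjointness of $L_\vartheta$ and $L_\vartheta^*$ until $\|\overline y_\vartheta-\overline y\|_{L^2(\Omega_T)}^2$ emerges, whereas you start from the squared error, decompose it, and discard the same monotonicity term as nonpositive. All the key ingredients — the splitting via $\hat y_\vartheta$ and $\hat p_\vartheta$, the identity $L_\vartheta^*(\overline y_\vartheta-\overline y)=\overline p_\vartheta-\hat p_\vartheta$, the $L^1$--$L^\infty(I;L^2(\Omega))$ H\"older estimate producing the $g$-dependent constant, and the final Young/square-root step — coincide with the paper's proof.
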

\begin{proof}
Lemma \ref{lemmaImply1}, the  properties of $B$ and $B^*$ and the fact that $\overline p_1(0)=\overline p_{1,\vartheta}(0)=0$ imply the following
\begin{multline*}
0\geq
\left\langle p_{1,\vartheta}-p_1,\overline{v}_{\vartheta}-\overline{v}\right\rangle=\left\langle B^*(\overline{p}_{\vartheta}-\overline{p}),(\overline{v}_{\vartheta}-\overline{v},\overline{c}_{\vartheta}-\overline{c})\right\rangle
= \left(\overline{p}_{\vartheta}-\hat{p}_\vartheta,(\overline{u}_\vartheta-\overline{u})\cdot g\right)_{L^2(\Omega_T)}\\
+\left(\hat{p}_\vartheta-\overline{p},(\overline{u}_\vartheta-\overline{u})\cdot g\right)_{L^2(\Omega_T)}\\
=\left(\overline{y}_{\vartheta}-\overline{y},L_\vartheta((\overline{u}_\vartheta-\overline{u})g)\right)_{L^2(\Omega_T)}+\left(\hat{p}_\vartheta-\overline{p},(\overline{u}_\vartheta-\overline{u})\cdot g\right)_{L^2(\Omega_T)}\\
=\|\overline{y}_\vartheta-\overline{y}\|_{L^2(\Omega_T)}^2+\left\langle\overline{y}_\vartheta-\overline{y},\overline{y}-\hat{y}_\vartheta\right\rangle_{L^2(\Omega_T)}+\left\langle\hat{p}_\vartheta-\overline{p},(
\overline{u}_\vartheta-\overline{u})\cdot g\right\rangle_{L^2(\Omega_T)}.
\end{multline*}
From these calculations we obtained (\ref{ineq1}) by
\begin{multline*}
\|\overline{y}_{\vartheta}-\overline{y}\|_{L^2(\Omega_T)}^2 \leq \left(\overline{y}-\overline{y}_{\vartheta},\overline{y}-\hat{y}_{\vartheta}\right)_{L^2(\Omega_T)}+\left(\overline{p} - \hat{p}_{\vartheta},(\overline{u}_{\vartheta}-\overline{u})\cdot g \right)_{L^2(\Omega_T)}\\
\leq\frac{1}{2}\|\overline{y}-\overline{y}_{\vartheta}\|_{L^2(\Omega_T)}^2+\frac{1}{2}\|\overline{y}-\hat{y}_{\vartheta}\|_{L^2(\Omega_T)}^2+c\|\overline{u}_{\vartheta}-\overline{u}\|_{L^1(I)^m}\|\overline{p} - \hat{p}_{\vartheta}\|_{L^\infty(I;L^2(\Omega))}.
\end{multline*}
\end{proof}
\begin{lemma}\label{boundedDiscreteoptiInBVandLInfty}
The sequence of the BV representatives $( \overline{u}_{\vartheta})_{\vartheta}$ of the optimal controls of $(\tilde{P}_{\vartheta}^{\text{semi}})$ are bounded in $BV(I)^m$ with respect to $\vartheta\rightarrow 0$.
\end{lemma}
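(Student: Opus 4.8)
The plan is to bound the two constituents of the $BV$-norm separately and uniformly in $\vartheta$: the variation $\|D_t\overline u_{\vartheta,j}\|_{M(I)}=\|\overline v_{\vartheta,j}\|_{M(I)}$ and the $L^1$-part $\|\overline u_{\vartheta,j}\|_{L^1(I)}$. First I would control the variation by exploiting optimality. Since $(0,0)$ is admissible for $(\tilde P_\vartheta^{\mathrm{semi}})$, optimality of $(\overline v_\vartheta,\overline c_\vartheta)$ gives
\[
\sum_{j=1}^m\alpha_j\|\overline v_{\vartheta,j}\|_{M(I)}\le J_\vartheta(\overline v_\vartheta,\overline c_\vartheta)\le \tfrac12\|S_\vartheta(0,0)-y_d\|_{L^2(\Omega_T)}^2=\tfrac12\|Q_\vartheta(y_0,y_1)-y_d\|_{L^2(\Omega_T)}^2 .
\]
By the stability estimate of Lemma \ref{LtauhAbsch} applied to $Q_\vartheta$ the right-hand side is bounded uniformly in $\vartheta$, whence $\|\overline v_{\vartheta,j}\|_{M(I)}\le C/\alpha_j$. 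From the representation \eqref{repBV} the primitive $t\mapsto\int_0^t d\overline v_{\vartheta,j}$ is bounded in $L^\infty(I)$ by $\|\overline v_{\vartheta,j}\|_{M(I)}$, so the zero-mean part satisfies $\|\overline u_{\vartheta,j}-\overline c_{\vartheta,j}\|_{L^\infty(I)}\le 2\|\overline v_{\vartheta,j}\|_{M(I)}$. The whole difficulty therefore reduces to a uniform bound on the mean values $\overline c_{\vartheta,j}$, since once these are controlled both $\|\overline u_{\vartheta,j}\|_{L^1(I)}$ and $\|D_t\overline u_{\vartheta,j}\|_{M(I)}$ are bounded.

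To bound the means I would isolate their contribution to the state. The same comparison yields $\|\overline y_\vartheta\|_{L^2(\Omega_T)}\le C$, and with $\overline y_\vartheta=L_\vartheta(B(\overline v_\vartheta,\overline c_\vartheta))+Q_\vartheta(y_0,y_1)$ and $B(\overline v_\vartheta,\overline c_\vartheta)=\sum_j(\overline u_{\vartheta,j}-\overline c_{\vartheta,j})g_j+\sum_j\overline c_{\vartheta,j}g_j$, Lemma \ref{LtauhAbsch} bounds both $Q_\vartheta(y_0,y_1)$ and $L_\vartheta\big(\sum_j(\overline u_{\vartheta,j}-\overline c_{\vartheta,j})g_j\big)$, the latter because its source has $L^1(I;L^2(\Omega))$-norm controlled by the already-bounded $L^\infty$-norm of the zero-mean parts. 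Subtracting leaves
\[
\Big\|L_\vartheta\Big(\sum_{j=1}^m\overline c_{\vartheta,j}g_j\Big)\Big\|_{L^2(\Omega_T)}\le C
\]
uniformly in $\vartheta$, where the argument is a source constant in time lying in the fixed $m$-dimensional space $\operatorname{span}\{g_1,\dots,g_m\}$.

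The crux, and the main obstacle, is a uniform-in-$\vartheta$ coercivity bound $\|L_\vartheta(\sum_j c_jg_j)\|_{L^2(\Omega_T)}\ge\gamma|c|$ for all $c\in\mathbb R^m$ and all small $\vartheta$. For the continuous operator this follows from injectivity: an eigenfunction expansion shows that the weak solution with constant-in-time source $h=\sum_jc_jg_j$ and zero initial data equals $\sum_k\frac{h_k}{\lambda_k}\big(1-\cos(\sqrt{\lambda_k}\,t)\big)\mu_k$, which vanishes identically only if $h=0$, i.e. $c=0$, the $g_j$ being linearly independent by their disjoint supports. As $c\mapsto L(\sum_jc_jg_j)$ is then a linear injection on the finite-dimensional $\mathbb R^m$, it is bounded below, $\|L(\sum_jc_jg_j)\|_{L^2(\Omega_T)}\ge 2\gamma|c|$. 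It remains to transfer this to $L_\vartheta$, i.e. to show $\sup_{|c|=1}\|(L-L_\vartheta)(\sum_jc_jg_j)\|_{L^2(\Omega_T)}\to0$; by finite dimensionality this reduces to $L_\vartheta(g_j)\to L(g_j)$ in $L^2(\Omega_T)$ for each $j$. Since $g_j$ is merely $L^\infty$, I would obtain this convergence without rate by density: approximate $g_j$ in $L^2(\Omega)$ by $g_j^\varepsilon\in H^1_0(\Omega)$, apply the rate estimate \eqref{AprioriEsti} of Theorem \ref{AprioriThm} to the smooth (and time-independent, hence $W^{1,1}(I;H^1_0(\Omega))$) source $g_j^\varepsilon$, and control the remainder through the stability bounds of Theorem \ref{energy} and Lemma \ref{LtauhAbsch}. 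Choosing $\vartheta$ small enough then gives $\|L_\vartheta(\sum_jc_jg_j)\|_{L^2(\Omega_T)}\ge\gamma|c|$, hence $|\overline c_\vartheta|\le C/\gamma$; combined with the variation bound from the first step this yields the desired uniform $BV$-bound.
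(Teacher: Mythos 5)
Your proof is correct, and its skeleton matches the paper's: compare with the zero control to bound $\sum_j\alpha_j\|\overline v_{\vartheta,j}\|_{M(I)}$ and $\|\overline y_\vartheta\|_{L^2(\Omega_T)}$ via the stability estimate of Lemma \ref{LtauhAbsch}, control the zero-mean part of $\overline u_\vartheta$ by its variation, and reduce everything to a uniform bound on the mean values $\overline c_\vartheta$, whose contribution to the state is isolated as $L_\vartheta(B(0,\overline c_\vartheta))$. Where you genuinely diverge is in how this last bound is obtained. The paper argues by contradiction: it assumes $\overline c_\vartheta$ unbounded, normalizes by $p_\vartheta=\max_j|\overline c_{\vartheta,j}|$, extracts a convergent subsequence $\overline a_\vartheta\to\overline a$ of the normalized constants, and passes to the limit using the stability of $L_\vartheta$ and Theorem \ref{AprioriThm} to conclude $L(B(0,\overline a))=0$, hence $\overline a=0$ by the disjoint supports of the $g_j$ — contradicting $|\overline a|=1$. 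You instead prove the quantitative statement underlying that soft argument: a uniform coercivity bound $\|L_\vartheta(\sum_j c_jg_j)\|_{L^2(\Omega_T)}\ge\gamma|c|$ for small $\vartheta$, deduced from injectivity of $L$ on the finite-dimensional span (via the eigenfunction expansion), compactness of the unit sphere, and uniform convergence $L_\vartheta(g_j)\to L(g_j)$. The two routes use identical ingredients, but yours buys two things: an explicit bound $|\overline c_\vartheta|\le C/\gamma$ rather than a pure existence-of-bound statement, and — more substantively — your density argument (approximating $g_j\in L^\infty(\Omega)$ by $H^1_0(\Omega)$ functions, applying \eqref{AprioriEsti} to the smooth approximant and stability to the remainder) justifies the convergence $L_\vartheta(g_j)\to L(g_j)$ under the paper's baseline assumption $g_j\in L^\infty(\Omega)$ only, whereas the paper's direct citation of Theorem \ref{AprioriThm} at this point tacitly requires the extra regularity $g\in(\mathbb H^2)^m$ that is only introduced as an assumption after this lemma. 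In that respect your argument is slightly more careful and more general.
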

\begin{proof}
At first, we show that
\[\overline{u}_{\vartheta}=\int\nolimits_0^t d\overline{v}_{\vartheta}(s)-\frac1T \int\nolimits_0^T\int\nolimits_0^t d\overline{v}_{\vartheta}(s)~ds+\overline{c}_{\vartheta}=\hat{u}_{\vartheta}+\overline{c}_{\vartheta}\]
is bounded in $BV(I)^m$ for $\vartheta\rightarrow 0$. Due to the optimality of $\overline{u}_\vartheta$, holds the inequality $J_\vartheta(\overline{u}_{\vartheta})\leq J_\vartheta(0)$ for all considered $\vartheta $. Define $y_{\vartheta}:=S_{\vartheta}(0,0)$ and $y=S(0,0)$.
Using Lemma \ref{LtauhAbsch} we have
\begin{equation*}
\|y_\vartheta\|_{C(\overline{I};L^2(\Omega))} \leq c\left(\|y_0\|_{H^1_0(\Omega)}+\|y_1\|_{L^2(\Omega)}\right).
\end{equation*}
Thus, the discrete states $y_\vartheta$ are bounded in $L^2(\Omega_T)$. Hence $\lbrace J_\vartheta(0)\rbrace_\vartheta$ is bounded in $\mathbb{R}$. This implies that $J_\vartheta(\overline{u}_{\vartheta})$ is bounded and thus, $( \overline{y}_\vartheta)_\vartheta$ and $( D_t\overline{u}_{\vartheta}=\overline{v}_{\vartheta} )_{\vartheta}$ are bounded in $L^2(\Omega_T)$, and $M(I)^m$ respectively.
Now it suffices to show that $\overline{c}_{\vartheta}\in \mathbb{R}^m$ is bounded in order to get the boundedness of $(\overline{u}_{\vartheta} )_{\vartheta}$ in $BV(I)^m$. Assume that $\overline{c}_{\vartheta}\in \mathbb{R}^m$ is unbounded for $\vartheta \rightarrow 0$. It holds
\begin{equation*}
\sum\nolimits_{j=1}^m\alpha_j \|D_t\hat{u}_{\vartheta,j} \|_{M(I)}=\sum\nolimits_{j=1}^m\alpha_j \|D_t\overline{u}_{\vartheta,j} \|_{M(I)}\leq J_\vartheta(\overline{u}_{\vartheta})\leq J_\vartheta(0)
\end{equation*}
and with the Poincare inequality for $BV(I)$ functions (\cite[p. 152]{[AmFuPa]}), we get that $( \hat{u}_{\vartheta} )_{\vartheta}$ is bounded in $BV(I)^m$. Consider $z_\vartheta=\overline{y}_\vartheta-\widetilde{y}_\vartheta$ with $\widetilde{y}_\vartheta=L_{\vartheta}(\hat{u}_{\vartheta}\cdot g)+Q_{\vartheta}(y_0,y_1)$. The $BV$ boundedness of $(\hat{u}_{\vartheta})_{\vartheta}$, and therefore the boundedness in $L^2(I)^m$, implies by Lemma \ref{LtauhAbsch} that $(\widetilde{y}_\vartheta)_\vartheta$ is bounded in $L^2(\Omega_T)$.
The boundedness of $(\widetilde{y}_\vartheta)_\vartheta$ and $(\overline{y}_\vartheta)_\vartheta$ lead to the boundedness of $(z_\vartheta)_\vartheta$ in $L^2(\Omega_T)$.
The linearity of $L_{\vartheta}(B(\cdot,\cdot))$, implies $z_\vartheta=L_\vartheta(B(0,\overline{c}_{\vartheta}))$. Consider now $p_{\vartheta}:=\max_{1\leq j \leq m}\vert \overline{c}_{\vartheta,j} \vert $, with $\overline{c}_{\vartheta} \to\infty$, $\xi_\vartheta:=\frac{1}{p_{\vartheta}} z_\vartheta$,  and $\overline{a}_{\vartheta}=\frac{\overline{c}_{\vartheta}}{p_{\vartheta}}$. There exists a $\vartheta_0>0$ such that for all $\vartheta<\vartheta_0$ the sequence $\overline{a}_{\vartheta}$ is bounded by definition in $\mathbb{R}^m$. Thus, let us now consider $\vartheta\leq\vartheta_0$ for all sequences in this proof. Hence, there exists a subsequence of $\overline{a}_{\vartheta}$, which converges to some $\overline{a}$. Denote this converging subsequence again by $\overline{a}_{\vartheta}$. The linear structure of $L_{\vartheta}(B(\cdot,\cdot))$ gives us $\xi_\vartheta=L_{\vartheta}(B(0,\overline{a}_{\vartheta}))$.
Define by $\overline{\xi}_\vartheta$ the solution $L_\vartheta(B(0,\overline{a}))$. Next we show that Lemma \ref{LtauhAbsch} leads to
$
\|\xi_\vartheta - \overline{\xi}_\vartheta\|_{L^2(\Omega_T)}\to 0
$.
Thus, we have
\begin{equation*}
\|\xi_\vartheta - \overline{\xi}_\vartheta\|_{L^2(\Omega_T)}=\|L_\vartheta(B(0,\overline{a}_{\vartheta}-\overline{a}))\|_{L^2(\Omega_T)}
\leq c |\overline{a}_{\vartheta}-\overline{a}|_{\mathbb R^m} \to 0.
\end{equation*}
Define $\overline{\xi}$ by $L(B(0,\overline{a}))$. Then we have
\begin{equation*}
\|\overline{\xi}-\overline{\xi}_\vartheta\|_{C(\overline{I};L^2(\Omega))}=\|L(B(0,\overline{a})-L_\vartheta(B(0,\overline{a}))\|_{C(\overline{I};L^2(\Omega))}\to 0
\end{equation*}
according to Theorem \ref{AprioriThm}.
With the boundedness of $z_\vartheta$ in $L^2(\Omega_T)$, the unboundedness of $\vert p_{\vartheta}\vert$, and the definition of $\xi_\vartheta=\frac{z_\vartheta}{p_\vartheta}$, we can deduce that $\xi_\vartheta \rightarrow 0$ in $L^2(\Omega_T)$. Hence, $\overline{\xi}_\vartheta= \xi_\vartheta + (\overline{\xi}_\vartheta -\xi_\vartheta)\rightarrow 0$ in $L^2(\Omega_T)$.
Thus, we obtain that $\overline{\xi}=0$, which implies $\sum\nolimits_{j=1}^m \overline{a}_jg_j=0$. Because $g_j \in L^\infty(\Omega)\setminus\lbrace 0 \rbrace$ have pointwise disjoint supports, we get that $\overline{a}_j=0$, which is a contradiction. Thus, it is shown that $\overline{c}_{\vartheta}$ is bounded, and hence $( \overline{u}_{\vartheta})_{\vartheta}$ is bounded in $BV(I)^m$.
\end{proof}
The next theorem states an a priori error estimate for the optimal state. Under additional assumptions on the structure of optimal adjoint state an improved rate for the optimal state is proven. Furthermore, an optimal convergence for the control in the $L^1(I)$-norm is proven. In order to obtain optimal convergence rates we assume the following regularity on the data:
\begin{assumption}
We assume that
\begin{itemize}
  \item $y_d \in C^1(\overline{I};H^1_0(\Omega))$
  \item $g\in (\mathbb{H}^{2})^m$
  \item $(y_0,y_1)\in \mathbb{H}^{3}\times \mathbb{H}^{2}$
\end{itemize}
\end{assumption}
\begin{corollary}\label{cor:Breg}
There holds: $B(v,c)\in L^\infty(I;\mathbb H^2)$ for any $(v,c)\in M(I)^m\times \mathbb R^m$
\end{corollary}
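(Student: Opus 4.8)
The plan is to exploit the tensor-product structure of $B$. By the definition \eqref{B_red_to_L_2} and the representation formula \eqref{repBV}, for every $(v,c)\in M(I)^m\times\mathbb R^m$ the function $B(v,c)$ separates the time and space variables as
\[
B(v,c)(t,\cdot)=\sum\nolimits_{j=1}^m u_j(t)\,g_j,\qquad u_j(t):=\int\nolimits_0^t dv_j(s)-\frac1T\int\nolimits_0^T\int\nolimits_0^t dv_j(s)~dt+c_j.
\]
The entire spatial regularity therefore comes from the fixed functions $g_j$, and the time dependence enters only through the scalar coefficients $u_j$. Since the $\mathbb H^2$-norm acts only on the spatial variable, it suffices to bound each $u_j$ in $L^\infty(I)$ and each $g_j$ in $\mathbb H^2$.

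For the spatial factor, the standing Assumption gives $g\in(\mathbb H^2)^m$, so each $g_j\in\mathbb H^2$ with finite norm. For the temporal factor, I would observe that $t\mapsto\int_0^t dv_j(s)$ is bounded pointwise by $\|v_j\|_{M(I)}$, the averaging term is a constant of the same magnitude, and $c_j$ is a fixed real number; hence $u_j$ is the $BV$-representative associated with $v_j$ and, by the embedding $BV(I)\hookrightarrow L^\infty(I)$ valid in one dimension, satisfies $\|u_j\|_{L^\infty(I)}\le 2\|v_j\|_{M(I)}+|c_j|<\infty$. This elementary pointwise estimate is really the only point requiring care, and it is immediate from the fact that the indefinite integral of a finite measure is bounded by its total variation.

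Combining the two ingredients, for almost every $t\in I$ the triangle inequality yields
\[
\|B(v,c)(t,\cdot)\|_{\mathbb H^2}\le\sum\nolimits_{j=1}^m|u_j(t)|\,\|g_j\|_{\mathbb H^2}\le\sum\nolimits_{j=1}^m\|u_j\|_{L^\infty(I)}\,\|g_j\|_{\mathbb H^2},
\]
and taking the essential supremum over $t\in I$ gives a finite bound on $\|B(v,c)\|_{L^\infty(I;\mathbb H^2)}$. I do not expect any genuine obstacle here: the statement is a direct consequence of the product structure of $B$, the assumed spatial smoothness $g_j\in\mathbb H^2$, and the boundedness of the $BV$-representative in time. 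The measurability of $t\mapsto B(v,c)(t,\cdot)$ as an $\mathbb H^2$-valued map follows from the measurability of the finitely many scalar functions $u_j$, so the Bochner-type argument presents no difficulty.
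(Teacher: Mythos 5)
Your proof is correct and follows essentially the same route as the paper, which simply invokes the definition of $B(v,c)$ together with the embedding $BV(I)\hookrightarrow L^\infty(I)$ and the standing assumption $g\in(\mathbb H^2)^m$. Your version merely makes explicit the pointwise bound on the $BV$-representatives and the measurability of the $\mathbb H^2$-valued map, which the paper leaves implicit.
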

\begin{proof}
This follows from the definition of $B(v,c)$ and the embedding of $BV(I)$ in $L^\infty(I)$.
\end{proof}
\begin{theorem}\label{convStatesExactVsSemi}
The following non-optimal error rate holds:
\begin{equation}\label{specificRate}
\|\overline{y}-\overline{y}_{\vartheta}\|_{L^2(\Omega_T)}\leq c(h+\tau)\left(\|y_0\|_{\mathbb{H}^{3}} +\|y_1\|_{\mathbb{H}^{2}}+\|y_d\|_{C^1(\overline{I};H^1_0(\Omega)))}\right).
\end{equation}
\end{theorem}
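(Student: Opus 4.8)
The plan is to start from the estimate of Lemma~\ref{firstEqError},
\[
\|\overline{y}_{\vartheta}-\overline{y}\|_{L^2(\Omega_T)} \leq c\|\overline{y}-\hat{y}_{\vartheta}\|_{L^2(\Omega_T)}+c\|\overline{u}_{\vartheta}-\overline{u}\|_{L^1(I)^m}^{\frac{1}{2}}\|\overline{p} - \hat{p}_{\vartheta}\|_{L^\infty(I;L^2(\Omega))}^{\frac{1}{2}},
\]
and to bound the two contributions separately. The first term is a pure finite element consistency error for the state equation, which I expect to behave like $h^2+\tau^2$. In the second term the control difference $\|\overline{u}_{\vartheta}-\overline{u}\|_{L^1(I)^m}$ is only bounded, so the entire burden falls on the adjoint error $\|\overline{p}-\hat{p}_{\vartheta}\|_{L^\infty(I;L^2(\Omega))}$; because of the square root it suffices that this quantity decays like $(h+\tau)^2$ to obtain the claimed rate $h+\tau$.

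For the first term I would use the splitting $\overline{y}-\hat{y}_{\vartheta}=(L-L_{\vartheta})(B(\overline{v},\overline{c}))+(Q-Q_{\vartheta})(y_0,y_1)$, which is immediate from $\overline{y}=S(\overline{v},\overline{c})$ and the definition of $\hat{y}_{\vartheta}$. By Corollary~\ref{cor:Breg} the forcing $B(\overline{v},\overline{c})$ lies in $L^\infty(I;\mathbb H^2)$, so the first summand is controlled by \eqref{Cor221}, and the second summand (zero forcing, initial data in $\mathbb H^3\times\mathbb H^2$) by Theorem~\ref{AprioriThm} as well. This yields $\|\overline{y}-\hat{y}_{\vartheta}\|_{L^2(\Omega_T)}\leq c(h^2+\tau^2)(\|B(\overline{v},\overline{c})\|_{L^\infty(I;\mathbb H^2)}+\|y_0\|_{\mathbb H^3}+\|y_1\|_{\mathbb H^2})$, where $\|B(\overline{v},\overline{c})\|_{L^\infty(I;\mathbb H^2)}\leq\|\overline{u}\|_{L^\infty(I)^m}\max_j\|g_j\|_{\mathbb H^2}$ is finite and can be absorbed into the constant.

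For the second term I would first invoke Lemma~\ref{boundedDiscreteoptiInBVandLInfty}, which bounds $(\overline{u}_{\vartheta})_{\vartheta}$ in $BV(I)^m\hookrightarrow L^1(I)^m$; since $\overline{u}$ is fixed, $\|\overline{u}_{\vartheta}-\overline{u}\|_{L^1(I)^m}^{1/2}$ is then uniformly bounded. It remains to estimate the adjoint error. Writing $\overline{p}=L^*(\overline{y}-y_d)$ and $\hat{p}_{\vartheta}=L_{\vartheta}^*(\overline{y}-y_d)$, one has $\overline{p}-\hat{p}_{\vartheta}=(L^*-L_{\vartheta}^*)(\overline{y}-y_d)$, and by Remark~\ref{AdjDiscOp} the a priori estimates of Theorem~\ref{AprioriThm} transfer, via the time reversal $\widetilde\phi$, to the backward problem with zero terminal data. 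Estimate \eqref{AprioriEsti} then gives $\|\overline{p}-\hat{p}_{\vartheta}\|_{L^\infty(I;L^2(\Omega))}\leq c(h^2+\tau^2)\|\overline{y}-y_d\|_{W^{1,1}(I;H^1_0(\Omega))}$, whose square root is of order $h+\tau$.

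The main obstacle is to justify that the adjoint forcing $\overline{y}-y_d$ has the regularity $W^{1,1}(I;H^1_0(\Omega))$ demanded by \eqref{AprioriEsti}; note that \eqref{Cor221} cannot be used here, because $y_d\in C^1(\overline{I};H^1_0(\Omega))$ is not valued in $\mathbb H^2$. The point is that, although $B(\overline{v},\overline{c})=\overline{u}\cdot g$ has only a measure-valued time derivative, the embedding $BV(I)\hookrightarrow L^\infty(I)$ (as in Corollary~\ref{cor:Breg}) gives $B(\overline{v},\overline{c})\in L^\infty(I;\mathbb H^1)$; applying the energy estimate \eqref{energy2} with $\alpha=1$, $\kappa=\infty$ to the $L$-part and \eqref{energy3} with $\alpha=1$ to the $Q$-part of $\overline{y}$ yields $\overline{y}\in C^1(\overline{I};H^1_0(\Omega))\hookrightarrow W^{1,1}(I;H^1_0(\Omega))$, with norm bounded by $\|y_0\|_{\mathbb H^3}+\|y_1\|_{\mathbb H^2}$ together with a term $\|\overline{u}\|_{L^\infty(I)^m}\max_j\|g_j\|_{\mathbb H^1}$ absorbed into $c$. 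Since $y_d\in C^1(\overline{I};H^1_0(\Omega))\hookrightarrow W^{1,1}(I;H^1_0(\Omega))$ as well, the difference $\overline{y}-y_d$ has the required regularity and its norm is controlled by the right-hand side of \eqref{specificRate}. Collecting the two bounds and using $h^2+\tau^2\leq(h+\tau)^2\lesssim h+\tau$ for bounded $h,\tau$ then gives the asserted estimate.
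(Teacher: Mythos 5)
Your proposal is correct and follows essentially the same route as the paper: it combines Lemma~\ref{firstEqError} with the bound $\|\overline{y}-\hat{y}_{\vartheta}\|_{L^2(\Omega_T)}=O(h^2+\tau^2)$ from Corollary~\ref{cor:Breg} and \eqref{Cor221}, the uniform $BV$-bound of Lemma~\ref{boundedDiscreteoptiInBVandLInfty} for the control term, and \eqref{AprioriEsti} applied to $\overline{p}-\hat{p}_{\vartheta}$ with the regularity $\overline{y}-y_d\in C^1(\overline{I};H^1_0(\Omega))$ obtained from Theorem~\ref{energy}. Your write-up merely makes explicit the steps the paper leaves implicit (the $L$/$Q$ splitting, the choice of $\alpha$ and $\kappa$ in the energy estimates, and the transfer of the a priori estimates to $L_\vartheta^*$ via Remark~\ref{AdjDiscOp}).
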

\begin{proof}
By Theorem \ref{energy} we have $\overline{y}-y_d \in C^1(\overline{I};H^1_0(\Omega))$. Using (\ref{AprioriEsti}), implies
\begin{equation}\label{dssdadsda}
\|\overline{p}-\hat{p}_{\vartheta}\|_{L^\infty(I;L^2(\Omega))}^{\frac{1}{2}}\leq
c(h^2 +\tau^2)^{\frac{1}{2}}\| \overline{y}-y_d\|_{W^{1,1}(I;H^1_0(\Omega))}^{\frac12}.
\end{equation}
Corollary \eqref{cor:Breg} and \ref{Cor221} lead to $\|\overline{y}-\hat{y}_{\vartheta}\|_{L^2(\Omega_T)}= O(\tau^2+h^2)$. Hence, Lemma \ref{boundedDiscreteoptiInBVandLInfty} and (\ref{ineq1}) imply (\ref{specificRate}).
\end{proof}
\begin{theorem}\label{CostminusCostDiscRate}
For the optimal control $(\overline{v},\overline{c})$ of $(\tilde P)$ and solutions $(\overline{v}_\vartheta,\overline{c}_\vartheta)$ of $(\tilde{P}^{\text{semi}}_{\vartheta})$ the following a priori error estimates hold:
\begin{equation}\label{vffs}
\vert J(\overline v,\overline c)-J_{\vartheta}(\overline v_\vartheta,\overline c_\vartheta)\vert = O(\tau^2+\tau^2)
\end{equation}
\begin{equation}\label{vffs2}
\sum_{i=1}^{m}\bigg\vert \|\overline v_i\|_{M(I)}-\|\overline v_{\vartheta,i}\|_{M(I)}\bigg\vert = O(\tau+h)
\end{equation}
\end{theorem}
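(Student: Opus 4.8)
The plan is to prove the two statements separately, in both cases exploiting the optimality of $(\overline v,\overline c)$ for $(\tilde P)$ and of $(\overline v_\vartheta,\overline c_\vartheta)$ for $(\tilde P_\vartheta^{\mathrm{semi}})$, together with the finite element rates already available. (Note that the right-hand side of \eqref{vffs} should read $O(\tau^2+h^2)$.) Throughout I use the identities $S(\overline v,\overline c)=\overline y$, $S_\vartheta(\overline v_\vartheta,\overline c_\vartheta)=\overline y_\vartheta$ and $S_\vartheta(\overline v,\overline c)=\hat y_\vartheta$ from the definitions, and the uniform $BV$-boundedness of the discrete controls from Lemma \ref{boundedDiscreteoptiInBVandLInfty}.

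For \eqref{vffs} I would sandwich the cost difference between two comparisons in which the regularisation term cancels. By continuous optimality $J(\overline v,\overline c)\le J(\overline v_\vartheta,\overline c_\vartheta)$ one obtains the upper bound $J(\overline v,\overline c)-J_\vartheta(\overline v_\vartheta,\overline c_\vartheta)\le\tfrac12\langle S(\overline v_\vartheta,\overline c_\vartheta)-\overline y_\vartheta,\,S(\overline v_\vartheta,\overline c_\vartheta)+\overline y_\vartheta-2y_d\rangle_{L^2(\Omega_T)}$, since the terms $\sum_j\alpha_j\|\overline v_{\vartheta,j}\|_{M(I)}$ cancel; here $S(\overline v_\vartheta,\overline c_\vartheta)-\overline y_\vartheta=(S-S_\vartheta)(\overline v_\vartheta,\overline c_\vartheta)=O(\tau^2+h^2)$ because $B(\overline v_\vartheta,\overline c_\vartheta)$ is bounded in $L^\infty(I;\mathbb H^2)$ uniformly in $\vartheta$ (Lemma \ref{boundedDiscreteoptiInBVandLInfty} and Corollary \ref{cor:Breg}), so \eqref{Cor221} applies. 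Symmetrically, discrete optimality $J_\vartheta(\overline v_\vartheta,\overline c_\vartheta)\le J_\vartheta(\overline v,\overline c)$ yields the lower bound $J(\overline v,\overline c)-J_\vartheta(\overline v_\vartheta,\overline c_\vartheta)\ge\tfrac12\langle \overline y-\hat y_\vartheta,\,\overline y+\hat y_\vartheta-2y_d\rangle_{L^2(\Omega_T)}$, with $\|\overline y-\hat y_\vartheta\|_{L^2(\Omega_T)}=O(\tau^2+h^2)$ again by Corollary \ref{cor:Breg} and \eqref{Cor221}. Since all remaining factors are uniformly bounded in $L^2(\Omega_T)$, both bounds are $O(\tau^2+h^2)$ and \eqref{vffs} follows.

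For \eqref{vffs2} I would work index by index and set $\delta_i:=\|\overline v_i\|_{M(I)}-\|\overline v_{\vartheta,i}\|_{M(I)}$. Testing the continuous variational inequality \eqref{equiv} with $v=\overline v_{\vartheta,i}$ and the discrete one \eqref{equiv2} with $v=\overline v_i$ gives the one-sided bounds $\alpha_i\delta_i\le\langle\overline v_i-\overline v_{\vartheta,i},\overline p_{1,i}\rangle$ and $\alpha_i\delta_i\ge\langle\overline v_i-\overline v_{\vartheta,i},\overline p_{1,\vartheta,i}\rangle$. The decisive step is to insert the exact complementarity relations $\langle\overline v_i,\overline p_{1,i}\rangle=-\alpha_i\|\overline v_i\|_{M(I)}$ and $\langle\overline v_{\vartheta,i},\overline p_{1,\vartheta,i}\rangle=-\alpha_i\|\overline v_{\vartheta,i}\|_{M(I)}$ from Proposition \ref{support_measure_cor} b) and its discrete analogue (Remark \ref{RauteRemark}): after adding and subtracting $\overline p_{1,\vartheta,i}$ respectively $\overline p_{1,i}$, the norm terms recombine into $-\alpha_i\delta_i$, leaving $2\alpha_i\delta_i\le-\langle\overline v_{\vartheta,i},\overline p_{1,i}-\overline p_{1,\vartheta,i}\rangle$ and $2\alpha_i\delta_i\ge\langle\overline v_i,\overline p_{1,\vartheta,i}-\overline p_{1,i}\rangle$. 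Consequently $\alpha_i|\delta_i|\le\tfrac12\bigl(\|\overline v_i\|_{M(I)}+\|\overline v_{\vartheta,i}\|_{M(I)}\bigr)\,\|\overline p_{1,i}-\overline p_{1,\vartheta,i}\|_{C_0(I)}$, and the measure norms are $O(1)$ by Lemma \ref{boundedDiscreteoptiInBVandLInfty}, so everything is reduced to the adjoint difference.

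The main work is therefore the estimate $\|\overline p_{1,i}-\overline p_{1,\vartheta,i}\|_{C_0(I)}=O(\tau+h)$, which follows from $\|\overline p-\overline p_\vartheta\|_{L^\infty(I;L^2(\Omega))}=O(\tau+h)$ via the definition of $\overline p_{1,i}$ and $g\in(\mathbb H^2)^m$. I would split $\overline p-\overline p_\vartheta=(L^*-L_\vartheta^*)(\overline y-y_d)+L_\vartheta^*(\overline y-\overline y_\vartheta)$: the first term is $O(\tau^2+h^2)$ by the adjoint error estimate \eqref{AprioriEsti}, valid for $L_\vartheta^*$ through Remark \ref{AdjDiscOp} since $\overline y-y_d\in C^1(\overline I;H^1_0(\Omega))$, while the second is bounded by $c\|\overline y-\overline y_\vartheta\|_{L^2(\Omega_T)}=O(\tau+h)$ using the stability of $L_\vartheta^*$ (Lemma \ref{LtauhAbsch}, Remark \ref{AdjDiscOp}) and the non-optimal state rate of Theorem \ref{convStatesExactVsSemi}. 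Summing $\alpha_i|\delta_i|=O(\tau+h)$ over the finitely many indices $i=1,\dots,m$ then gives \eqref{vffs2}. I expect the subtle point to be precisely the passage from the two one-sided variational inequalities to a genuine per-index bound on $|\delta_i|$: a naive argument controls only the signed sum $\bigl|\sum_i\alpha_i\delta_i\bigr|$, and it is the exact complementarity that lets one estimate each $\alpha_i|\delta_i|$ by the adjoint error, for which here only the rate $O(\tau+h)$ — not $O(\tau^2+h^2)$ — is available.
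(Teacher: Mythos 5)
Your proof is correct, and it splits into two parts that compare differently with the paper. For \eqref{vffs} your sandwich is exactly the paper's argument: optimality of $(\overline v,\overline c)$ and $(\overline v_\vartheta,\overline c_\vartheta)$ traps $J(\overline v,\overline c)-J_\vartheta(\overline v_\vartheta,\overline c_\vartheta)$ between $J-J_\vartheta$ evaluated at the two fixed controls, and each of those differences is $O(\tau^2+h^2)$ by Corollary \ref{cor:Breg}, the uniform $BV$-bound of Lemma \ref{boundedDiscreteoptiInBVandLInfty} and \eqref{Cor221}; your explicit polarization identities are just the paper's ``common calculations'' written out (and you are right that the stated rate $O(\tau^2+\tau^2)$ is a typo for $O(\tau^2+h^2)$). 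For \eqref{vffs2}, however, you take a genuinely different route. The paper disposes of it in one line --- ``Theorem \ref{convStatesExactVsSemi} implies \eqref{vffs2}'' --- which, unpacked, means subtracting the quadratic tracking terms from the cost difference: this controls the \emph{signed} sum $\sum_i\alpha_i\bigl(\|\overline v_i\|_{M(I)}-\|\overline v_{\vartheta,i}\|_{M(I)}\bigr)$ by $O(\tau+h)$, which settles the claim for $m=1$ but, taken literally, leaves open the possibility of cancellation between components when $m>1$. Your argument closes exactly this gap: testing \eqref{equiv} with $\overline v_{\vartheta,i}$ and \eqref{equiv2} with $\overline v_i$, then inserting the complementarity relations of Proposition \ref{support_measure_cor}\,b) and Remark \ref{RauteRemark}, yields the two-sided per-index bound
\begin{equation*}
2\alpha_i\,\bigl\vert\,\|\overline v_i\|_{M(I)}-\|\overline v_{\vartheta,i}\|_{M(I)}\bigr\vert
\le \bigl(\|\overline v_i\|_{M(I)}+\|\overline v_{\vartheta,i}\|_{M(I)}\bigr)\,\|\overline p_{1,i}-\overline p_{1,\vartheta,i}\|_{C_0(I)},
\end{equation*}
and your splitting $\overline p-\overline p_\vartheta=(L^*-L_\vartheta^*)(\overline y-y_d)+L_\vartheta^*(\overline y-\overline y_\vartheta)$ correctly gives $O(\tau^2+h^2)+O(\tau+h)$ via \eqref{AprioriEsti}, Remark \ref{AdjDiscOp}, Lemma \ref{LtauhAbsch} and Theorem \ref{convStatesExactVsSemi}. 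The price of your approach is the extra adjoint-error estimate; what it buys is a componentwise bound, which is what \eqref{vffs2} (a sum of absolute values) literally asserts, so your proof is in this respect more complete than the paper's.
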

The proof of Theorem \ref{CostminusCostDiscRate} is a modified version of the proof of \cite[Theorem 4.2.]{[VexPie]}.\\
\begin{proof}
Optimality leads to the following two inequalities
\begin{equation*}
J(\overline v,\overline c)\leq J(\overline v_{\vartheta},\overline c_\vartheta)\quad\text{and}\quad J_{\vartheta}(\overline v_{\vartheta},\overline c_\vartheta)\leq J_{\vartheta}(\overline v,\overline c).
\end{equation*}
This implies
$J(\overline v,\overline c)-J_{\vartheta}(\overline v,\overline c)\leq J(\overline v,\overline c)-J_{\vartheta}(\overline v_{\vartheta},\overline c_\vartheta)\leq J(\overline v_{\vartheta},\overline c_\vartheta)-J_{\vartheta}(\overline v_{\vartheta},\overline c_\vartheta)$.
So it remains to estimate the error with respect to the cost functionals for a fixed $(v,c)$, i.e. $(\overline v,\overline c)$ and $(\overline v_\vartheta,\overline c_\vartheta)$. Common calculations lead to the following estimate:
\begin{multline}\label{245f}
|J(v,c)-J_{\vartheta}(v,c)|\\
=\frac{1}{2}\|S(v,c)-S_{\vartheta}(v,c)\|_{L^2(\Omega_T)}^2+|(S(v,c)-S_\vartheta(v,c),S(v,c)-y_d))_{L^2(\Omega_T)}|\\
\leq \frac{1}{2}\|S(v,c)-S_{\vartheta}(v,c)\|_{L^2(\Omega_T)}^2+\|S(v,c)-S_\vartheta(v,c)\|_{L^2(\Omega_T)}\|S(v,c)-y_d\|_{L^2(\Omega_T)}
\end{multline}
Then Corollary \ref{cor:Breg} and \ref{Cor221} implies the first assertion. Finally, Theorem \ref{convStatesExactVsSemi} implies \eqref{vffs2}.
\end{proof}
\subsection{Optimal Convergence Rates for the Optimal Controls of $(\tilde{P}_{\vartheta}^{\text{semi}})$}
Under certain assumptions we show that the BV-representations $\overline{u}_\vartheta$ of the optimal controls of $(\tilde{P}_{\vartheta}^{semi})$, with respect to $\vartheta$, converge with a specific rate in the $L^1-$norm to the solution $\overline{u}$ of $(P)$.
Further, define the following functions:
\begin{align}
z(t)&:=\partial_t p_{1}(t)= \int\nolimits_\Omega L^* \left( S\left(\overline v,\overline c\right)-y_d \right)(t) g~dx\\
z_{\vartheta}(t)&:=\partial_t p_{1,\vartheta}(t)= \int\nolimits_\Omega L_\vartheta^* \left( S_\vartheta\left(\overline v_\vartheta,\overline c_\vartheta\right)-y_d \right)(t) g~dx
\end{align}
with $(\overline v,\overline c)$ as the optimal control of $(\widetilde{P})$ and $(\overline v_\vartheta,\overline c_\vartheta)$ as optimal control of $(\widetilde{P}_\vartheta^{semi})$. Due to Proposition \ref{support_measure_cor} and Remark \ref{RauteRemark}, it holds that $\supp(\overline{v}_i)\subseteq \lbrace t \vert z_i(t)=0\rbrace$ and $\supp(\overline{v}_{\vartheta,i})\subseteq \lbrace t \vert z_{\vartheta,i}(t)=0\rbrace$.
\begin{lemma}\label{LemmaGMatrix}
The matrix $G:=(\langle Lg_i,Lg_j\rangle_{L^2(\Omega_T)})_{i,j=1}^m\in \mathbb{R}^{m\times m}$ is symmetric and positive definite.
\end{lemma}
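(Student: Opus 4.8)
The plan is to establish the two assertions separately: symmetry is immediate from the symmetry of the $L^2(\Omega_T)$ inner product, since $G_{ij}=\langle Lg_i,Lg_j\rangle_{L^2(\Omega_T)}=\langle Lg_j,Lg_i\rangle_{L^2(\Omega_T)}=G_{ji}$. For positive definiteness, I would show that $G$ is the Gram matrix of the functions $Lg_1,\ldots,Lg_m\in L^2(\Omega_T)$, so that for any $\xi=(\xi_1,\ldots,\xi_m)\in\mathbb R^m$ one has
\[
\xi^\top G\xi=\sum_{i,j=1}^m\xi_i\xi_j\langle Lg_i,Lg_j\rangle_{L^2(\Omega_T)}=\Big\|\sum_{i=1}^m\xi_i Lg_i\Big\|_{L^2(\Omega_T)}^2\geq 0.
\]
This shows $G$ is positive semidefinite. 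The only remaining task is to rule out degeneracy: I must show $\xi^\top G\xi=0$ forces $\xi=0$, i.e. that the family $(Lg_i)_{i=1}^m$ is linearly independent in $L^2(\Omega_T)$.

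The key step is therefore the linear independence of $Lg_1,\ldots,Lg_m$. By linearity of $L$, $\sum_i\xi_i Lg_i=L\big(\sum_i\xi_i g_i\big)$, so it suffices to argue that $L\big(\sum_i\xi_i g_i\big)=0$ in $L^2(\Omega_T)$ implies $\xi=0$. Here I would use the structural hypothesis that the $g_j\in L^\infty(\Omega)\setminus\{0\}$ have pairwise disjoint supports: the same device already used at the end of the proof of Lemma \ref{boundedDiscreteoptiInBVandLInfty}. Concretely, $L(f)$ is the weak solution $y(f)$ of the wave equation with a time-independent spatial source $f=\sum_i\xi_i g_i$ and zero initial data. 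If $y(f)\equiv 0$ on $\Omega_T$, then differentiating the state equation $\partial_{tt}y-\Delta y=f$ in the sense of distributions forces $f=0$ in $L^2(\Omega)$; since the supports of the $g_i$ are pairwise disjoint and each $g_i\not\equiv 0$, the relation $\sum_i\xi_i g_i=0$ yields $\xi_i g_i=0$ on $\supp(g_i)$ and hence $\xi_i=0$ for every $i$.

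The main obstacle I anticipate is justifying rigorously that a vanishing weak solution $y(f)\equiv 0$ forces the source $f$ to vanish, since $L$ maps into $L^2(\Omega_T)$ and the argument passes through the PDE rather than through pointwise values of $y$. The clean way around this is to invoke uniqueness from Theorem \ref{energy}: the map $f\mapsto y(f)$ is injective because a zero solution with zero data in the variational formulation \eqref{weakSoluVariForm} can only be produced by $f=0$ (test \eqref{weakSoluVariForm} against suitable $\eta$ and read off that the source term must vanish). Once injectivity of $L$ on the time-independent source $\sum_i\xi_i g_i$ is in hand, the disjoint-support argument finishes the proof, and strict positivity of $\xi^\top G\xi$ for $\xi\neq 0$ follows.
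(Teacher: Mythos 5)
Your proposal is correct and follows essentially the same route as the paper: the paper's (very terse) proof also identifies $G$ as a Gram matrix and deduces positive definiteness from the uniqueness of solutions of the wave equation (i.e.\ injectivity of $L$) together with the linear independence of the system $\{g_i\}_{i=1}^m$, which is exactly what you establish via the disjoint-support argument and testing the weak formulation. Your write-up simply makes explicit the details the paper leaves implicit.
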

\begin{proof}
the matrix $G$ is a Gramian-matrix, which is a consequence of the uniqueness of solutions of the wave equation the fact that $\{g_i\}_{i=1}^m$ is a linear independent system.
\end{proof}
\begin{theorem}\label{weak*conv_reg_prob1}
$\overline{u}_\vartheta$ converges weakly* in $BV(0,T)^m$ to the solution $\overline{u}$ for $\vartheta\rightarrow 0$.
\end{theorem}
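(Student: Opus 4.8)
The plan is the usual compactness-and-uniqueness scheme. By Lemma~\ref{boundedDiscreteoptiInBVandLInfty} the family $(\overline{u}_\vartheta)_\vartheta$ is bounded in $BV(I)^m$, so by the compact embedding $BV(I)\hookrightarrow L^1(I)$ in one dimension together with the sequential weak* compactness of bounded balls in $M(I)$, every subsequence of $(\overline{u}_\vartheta)_\vartheta$ has a further subsequence (not relabeled) with $\overline{u}_\vartheta\to u^*$ in $L^1(I)^m$ and $\overline{v}_\vartheta=D_t\overline{u}_\vartheta\tostar v^*:=D_tu^*$ in $M(I)^m$ for some $u^*\in BV(I)^m$. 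Passing to the limit in the mean via the representation \eqref{repBV} gives $\overline{c}_\vartheta=\tfrac1T\int_0^T\overline{u}_\vartheta\,dt\to\tfrac1T\int_0^Tu^*\,dt=:c^*$. It then suffices to prove that any such limit satisfies $u^*=\overline{u}$; since the limit is then the same for all subsequences, the whole family converges weakly* to $\overline{u}$.

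To identify the limit I would show that $(v^*,c^*)$ is optimal for $(\tilde{P})$ and invoke uniqueness (Theorem~\ref{ExistenceSoluti}). The decisive ingredient is the convergence of the discrete optimal states, which I would obtain from the splitting
\[
S_\vartheta(\overline{v}_\vartheta,\overline{c}_\vartheta)-S(v^*,c^*)=(L_\vartheta-L)\bigl(B(\overline{v}_\vartheta,\overline{c}_\vartheta)\bigr)+(Q_\vartheta-Q)(y_0,y_1)+L\bigl(B(\overline{v}_\vartheta,\overline{c}_\vartheta)-B(v^*,c^*)\bigr).
\]
For the first two terms I would use the a priori estimate \eqref{Cor221}: since $B(\overline{v}_\vartheta,\overline{c}_\vartheta)=\sum_{j}\overline{u}_{\vartheta,j}g_j$ and $g\in(\mathbb{H}^2)^m$, the uniform $BV(I)^m\hookrightarrow L^\infty(I)^m$ bound from Lemma~\ref{boundedDiscreteoptiInBVandLInfty} yields a $\vartheta$-independent bound in $L^\infty(I;\mathbb{H}^2)$, so these terms are $O(h^2+\tau^2)$ with a uniform constant. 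For the third term, $\overline{u}_\vartheta\to u^*$ in $L^1(I)^m$ forces $B(\overline{v}_\vartheta,\overline{c}_\vartheta)\to B(v^*,c^*)$ in $L^1(I;L^2(\Omega))$, and $L$ maps $L^1(I;L^2(\Omega))$ continuously into $C(\overline{I};L^2(\Omega))$ by Theorem~\ref{energy}. Hence $S_\vartheta(\overline{v}_\vartheta,\overline{c}_\vartheta)\to S(v^*,c^*)$ in $L^2(\Omega_T)$.

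With this convergence I would pass to the limit in the optimality inequality $J_\vartheta(\overline{v}_\vartheta,\overline{c}_\vartheta)\le J_\vartheta(\overline{v},\overline{c})$. The tracking term converges by the strong $L^2(\Omega_T)$ convergence of the states, while the total variation term is weakly* lower semicontinuous under $\overline{v}_\vartheta\tostar v^*$, so $\tilde{J}(v^*,c^*)\le\liminf_{\vartheta\to 0}J_\vartheta(\overline{v}_\vartheta,\overline{c}_\vartheta)$. On the right-hand side, for the fixed admissible pair $(\overline{v},\overline{c})$ one has $S_\vartheta(\overline{v},\overline{c})\to S(\overline{v},\overline{c})$ in $L^2(\Omega_T)$ by Corollary~\ref{cor:Breg} and \eqref{Cor221}, while its total variation term is constant in $\vartheta$, whence $J_\vartheta(\overline{v},\overline{c})\to\tilde{J}(\overline{v},\overline{c})$. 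Combining these gives $\tilde{J}(v^*,c^*)\le\tilde{J}(\overline{v},\overline{c})$, so $(v^*,c^*)$ is a minimizer of $(\tilde{P})$; uniqueness forces $(v^*,c^*)=(\overline{v},\overline{c})$, and hence $u^*=\overline{u}$.

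The main obstacle is the state convergence with a $\vartheta$-dependent right-hand side: one must reconcile the fixed finite element convergence rate with a source $B(\overline{v}_\vartheta,\overline{c}_\vartheta)$ that itself changes with $\vartheta$. This is precisely where the uniform $BV$-bound of Lemma~\ref{boundedDiscreteoptiInBVandLInfty} is essential, as it converts the varying source into a uniformly bounded datum in $L^\infty(I;\mathbb{H}^2)$ to which the a priori estimate applies with a constant independent of $\vartheta$. The remaining steps, namely the lower semicontinuity of the variation term and the uniqueness argument, are routine.
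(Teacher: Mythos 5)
Your proposal is correct, and it follows the same overall compactness--lower semicontinuity--uniqueness architecture as the paper; the differences lie in two technical choices, both legitimate. First, for the convergence of the discrete optimal states the paper uses the splitting $S_{\vartheta_{n_k}}(\overline v_{\vartheta_{n_k}},\overline c_{\vartheta_{n_k}})-S_{\vartheta_{n_k}}(\widetilde v,\widetilde c)$ plus $S_{\vartheta_{n_k}}(\widetilde v,\widetilde c)-S(\widetilde v,\widetilde c)$: the first term is controlled by the \emph{discrete} stability estimate of Lemma~\ref{LtauhAbsch} applied to the control difference, and the second by the a priori estimate \eqref{Cor221} applied to the \emph{fixed} limit data $(\widetilde v,\widetilde c)$. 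You use the mirror-image decomposition $(L_\vartheta-L)\bigl(B(\overline v_\vartheta,\overline c_\vartheta)\bigr)+(Q_\vartheta-Q)(y_0,y_1)+L\bigl(B(\overline v_\vartheta,\overline c_\vartheta)-B(v^*,c^*)\bigr)$, i.e.\ the a priori estimate is applied to the \emph{varying} discrete data and the \emph{continuous} stability of $L$ to the control difference. This works, but it carries the extra obligation---which you correctly discharge via the uniform $BV(I)^m\hookrightarrow L^\infty(I)^m$ bound from Lemma~\ref{boundedDiscreteoptiInBVandLInfty} and $g\in(\mathbb H^2)^m$---of checking that the constant in \eqref{Cor221} is uniform over the family of right-hand sides; the paper's ordering sidesteps this because its a priori estimate sees only one fixed source (the same ``uniform data bound'' device does appear later in the paper, e.g.\ in Lemma~\ref{LemmajumpHeight}). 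Second, to identify the limit as optimal, the paper invokes the already-proven quantitative result of Theorem~\ref{CostminusCostDiscRate} to get $J(\overline v,\overline c)=\lim_k J_{\vartheta_k}(\overline v_{\vartheta_k},\overline c_{\vartheta_k})$ and combines it with lower semicontinuity, whereas you pass directly to the limit in the discrete optimality inequality $J_\vartheta(\overline v_\vartheta,\overline c_\vartheta)\le J_\vartheta(\overline v,\overline c)$; your route is more self-contained (it needs only \eqref{Cor221} and Corollary~\ref{cor:Breg} for the fixed pair $(\overline v,\overline c)$), while the paper's reuses an existing estimate and in fact obtains the slightly stronger statement that the discrete optimal costs converge to the optimal cost. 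Two presentational points you should make explicit: the limit $\vartheta\to 0$ must be interpreted along arbitrary null sequences $(\tau_n,h_n)$, as the paper does, and the identity $\overline c_\vartheta=\tfrac1T\int_0^T\overline u_\vartheta\,dt$ used for the convergence of the constants follows from the representation \eqref{repBV}.
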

\begin{proof}
Let $(\vartheta_n)_{n=1}^\infty=(\tau_n,h_n)$ be a null sequence such that $(\tau_n)_{n=1}^\infty\subset \mathbb{R}^+$, $(h_n)_{n=1}^\infty\subset \mathbb{R}^+$.
Lemma \ref{boundedDiscreteoptiInBVandLInfty} implies that $(\overline{u}_{\vartheta_n})_{n=1}^\infty$ is a bounded sequence in $BV(I)^m$ where $(\overline v_{\vartheta_n},\overline c_{\vartheta_n})$ are optimal controls of $(P^{semi}_{\vartheta_n})$.
The weak* compactness of closed and bounded sets in $BV(I)^m$ implies the existence of a subsequence $(\overline{u}_{\vartheta_{n_k}})_{k}$ which converges weakly* to some $\widetilde{u}\in BV(I)^m$. Hence, $(\overline{u}_{\vartheta_{n_k}})_{k}$ converges in $L^2(I)^m$ to $\widetilde{u}$ and $D_t \overline{u}_{\vartheta_{n_k}}=\overline v_{\vartheta_{n_k}}$ converges weakly* in $M(I)^m$ to $D_t\widetilde{u}$. There exists a unique element $(\widetilde{v},\widetilde{c})\in M(I)^m\times \mathbb{R}^m$ such that $\widetilde{u}=\int\nolimits_0^\cdot ~d\widetilde{v}(s)-\frac{1}{T}\int\nolimits_0^T\int\nolimits_0^t ~d\widetilde{v}(s)~dt+\widetilde{c}$. Due to the weak* l.s.c. of $\|\cdot\|_{M(I)}$ in $M(I)$,
we get
\begin{equation}\label{lowSemiContii}
\liminf_{k \rightarrow \infty} \sum\nolimits_{i=1}^m \alpha_i \|\overline v_{\vartheta_{n_k,i}}\|_{M(I)}\geq \sum\nolimits_{i=1}^m \alpha_i \|D_t\widetilde{u}_i\|_{M(I)}.
\end{equation}
Let us show that
\begin{equation}\label{ONeAim134q}
\lim_{k \rightarrow \infty} \|S_{\vartheta_{n_k}}(\overline v_{\vartheta_{n_k}},\overline c_{\vartheta_{n_k}})-y_d\|_{L^2(\Omega_T)}^2= \|S(\widetilde{v},\widetilde{c})-y_d\|_{L^2(\Omega_T)}^2
\end{equation}
holds. Theorem \ref{AprioriThm}, the stability of $L_{\vartheta_{n_k}}$, see Lemma \ref{LtauhAbsch} and the strong convergence of $\overline u_{\vartheta_k}$ in $L^2(I)$ lead to
\begin{multline*}
\|S_{\vartheta_{n_k}}(\overline{v}_{\vartheta_{n_k}},\overline{c}_{\vartheta_{n_k}})-S(\widetilde{v},\widetilde{c})\|_{L^2(\Omega_T)}\leq
\|S_{\vartheta_{n_k}}(\overline{v}_{\vartheta_{n_k}},\overline{c}_{\vartheta_{n_k}})-S_{\vartheta_{n_k}}(\widetilde{v},\widetilde{c})\|_{L^2(\Omega_T)}\\
+\|S_{\vartheta_{n_k}}(\widetilde{v},\widetilde{c})-S(\widetilde{v},\widetilde{c})\|_{L^2(\Omega_T)}
\leq c\|\overline{u}_{\vartheta_k}-\widetilde{u}\|_{L^2(I)^m}\\
+c(h_{n_k}^2+\tau_{{n_k}}^2)
(\|y_0\|_{\mathbb H^3}+\|y_1\|_{\mathbb H^2}+\|B(\widetilde{v},\widetilde{c})\|_{L^\infty(I;\mathbb H^2)})
\end{multline*}
This leads to (\ref{ONeAim134q}). With (\ref{lowSemiContii}), (\ref{ONeAim134q}) and Theorem \ref{CostminusCostDiscRate} we get
\begin{equation*}
J(\overline{v},\overline c)=\liminf_{k \rightarrow \infty} J_{\vartheta_k}(\overline{v}_{\vartheta_k},\overline{c}_{\vartheta_k})\geq J(\widetilde{v},\widetilde c).
\end{equation*}
The uniqueness of the optimal control of $(P)$ leads to the desired result.
\end{proof}
\begin{corollary}\label{corConvWeak*Control}
There holds $\overline{u}_\vartheta\to \overline{u}$ in $L^2(I)$ for $\vartheta\to 0$.
\end{corollary}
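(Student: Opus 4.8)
The plan is to upgrade the weak* convergence in $BV(I)^m$ already furnished by Theorem \ref{weak*conv_reg_prob1} to strong convergence in $L^2(I)^m$, using the compactness of the embedding $BV(I)\hookrightarrow L^2(I)$ in one dimension together with the standard subsequence principle. The key structural input is that the weak* limit has already been pinned down to be the unique optimal control $\overline u$, so the only remaining work is to exclude loss of mass in the $L^2$-norm along the sequence.

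First I would recall from Lemma \ref{boundedDiscreteoptiInBVandLInfty} that the family $(\overline u_\vartheta)_\vartheta$ is bounded in $BV(I)^m$, hence in particular in $L^\infty(I)^m$. Since in one space dimension the embedding $BV(I)\hookrightarrow L^2(I)$ is compact, from any null sequence $\vartheta_n\to 0$ one can extract a subsequence along which $\overline u_{\vartheta_{n_k}}$ converges strongly in $L^2(I)^m$; indeed this strong $L^2$-convergence of subsequences is exactly what was already used inside the proof of Theorem \ref{weak*conv_reg_prob1}. Strong convergence in $L^2(I)^m$ implies weak* convergence in $BV(I)^m$ along that subsequence, and because Theorem \ref{weak*conv_reg_prob1} guarantees that the whole family converges weakly* in $BV(I)^m$ to $\overline u$, the strong $L^2$-limit of the extracted subsequence must coincide with $\overline u$.

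Finally I would invoke the Urysohn subsequence argument: every null sequence $\vartheta_n\to 0$ admits a further subsequence along which $\overline u_{\vartheta_{n_k}}\to \overline u$ strongly in $L^2(I)^m$, and since this limit is independent of the chosen (sub)sequence, the whole family satisfies $\overline u_\vartheta\to \overline u$ in $L^2(I)^m$. An equivalent route, avoiding explicit extraction, is to note that weak* $BV$-convergence entails strong $L^1(I)^m$-convergence and then interpolate against the uniform $L^\infty$-bound, using $\|\overline u_\vartheta-\overline u\|_{L^2(I)}^2\le \|\overline u_\vartheta-\overline u\|_{L^1(I)}\,\|\overline u_\vartheta-\overline u\|_{L^\infty(I)}$. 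I expect no genuine obstacle here; the only points requiring care are the compactness of $BV(I)\hookrightarrow L^2(I)$ and the uniqueness of the limit, both of which are already available from the preceding results.
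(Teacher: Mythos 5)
Your proposal is correct and matches the paper's intent: the corollary is stated without a separate proof precisely because the compact embedding $BV(I)\hookrightarrow L^2(I)$ and the resulting strong $L^2$-convergence of weak*-convergent subsequences are already invoked inside the proof of Theorem \ref{weak*conv_reg_prob1}, so that uniqueness of the limit $\overline u$ plus the subsequence principle immediately yield convergence of the whole family. Your argument (including the alternative interpolation route via $\|\cdot\|_{L^2}^2\le\|\cdot\|_{L^1}\|\cdot\|_{L^\infty}$ and the $BV$-bound from Lemma \ref{boundedDiscreteoptiInBVandLInfty}) is exactly this mechanism made explicit.
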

Next we prove pointwise convergence of $z_\vartheta$ and $\partial_t z_\vartheta$.
\begin{lemma}\label{GlmConvZ}
For $\vartheta\rightarrow 0$ we have $\|z_\vartheta - z\|_{L^\infty(I)^m} \rightarrow 0$.
\end{lemma}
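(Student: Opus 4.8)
The plan is to reduce everything to an estimate of the adjoint states in the $C(\overline I;L^2(\Omega))$-norm. Recalling that $z_i=\partial_t p_{1,i}(t)=\int_\Omega\overline p\,g_i~dx$ and $z_{\vartheta,i}(t)=\int_\Omega\overline p_\vartheta\,g_i~dx$, the difference is simply $z_{\vartheta,i}(t)-z_i(t)=\int_\Omega\bigl(\overline p_\vartheta(t)-\overline p(t)\bigr)g_i~dx$. A Cauchy--Schwarz estimate in space, applied pointwise in $t$, gives $|z_{\vartheta,i}(t)-z_i(t)|\leq\|\overline p_\vartheta(t)-\overline p(t)\|_{L^2(\Omega)}\|g_i\|_{L^2(\Omega)}$, and taking the supremum over $t$ yields
\[
\|z_{\vartheta,i}-z_i\|_{L^\infty(I)}\leq\|g_i\|_{L^2(\Omega)}\,\|\overline p_\vartheta-\overline p\|_{C(\overline I;L^2(\Omega))}.
\]
Summing over $i=1,\ldots,m$, the claim is reduced to showing that $\|\overline p_\vartheta-\overline p\|_{C(\overline I;L^2(\Omega))}\to 0$ as $\vartheta\to 0$.

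To exploit the error estimates already available, I would split this adjoint error through the mixed adjoint state $\hat p_\vartheta=L_\vartheta^*(\overline y-y_d)$. Since $\overline p=L^*(\overline y-y_d)$ and $\overline p_\vartheta=L_\vartheta^*(\overline y_\vartheta-y_d)$, adding and subtracting $\hat p_\vartheta$ gives the decomposition
\[
\overline p_\vartheta-\overline p=L_\vartheta^*\bigl(\overline y_\vartheta-\overline y\bigr)+\bigl(\hat p_\vartheta-\overline p\bigr).
\]
The first term measures how the discrete adjoint solver propagates the state error $\overline y_\vartheta-\overline y$, and the second is the pure discretization error of the adjoint operator applied to the fixed, regular datum $\overline y-y_d$.

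For the first term I would invoke the stability of the discrete solver: by Lemma~\ref{LtauhAbsch} (valid for $L_\vartheta^*$ by Remark~\ref{AdjDiscOp}) and H\"older's inequality in time,
\[
\|L_\vartheta^*(\overline y_\vartheta-\overline y)\|_{C(\overline I;L^2(\Omega))}\leq c\,\|\overline y_\vartheta-\overline y\|_{L^1(I;L^2(\Omega))}\leq c\sqrt{T}\,\|\overline y_\vartheta-\overline y\|_{L^2(\Omega_T)},
\]
which tends to zero by the state convergence in Theorem~\ref{convStatesExactVsSemi}. For the second term I would apply the a priori error estimate \eqref{AprioriEsti} (again for the adjoint via Remark~\ref{AdjDiscOp}), noting that by Theorem~\ref{energy} together with the regularity assumptions the right-hand side $\overline y-y_d$ of the adjoint equation lies in $C^1(\overline I;H^1_0(\Omega))\hookrightarrow W^{1,1}(I;H^1_0(\Omega))$ and the terminal data vanish; this yields $\|\hat p_\vartheta-\overline p\|_{C(\overline I;L^2(\Omega))}=O(h^2+\tau^2)$. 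Combining the two bounds gives $\|\overline p_\vartheta-\overline p\|_{C(\overline I;L^2(\Omega))}\to 0$, and the lemma follows.

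The only delicate point is bookkeeping rather than genuine difficulty: one must verify that all the stability and error estimates stated for the forward operators $L_\vartheta$, $L$ transfer to their adjoints $L_\vartheta^*$, $L^*$, which is exactly guaranteed by the time-reversal identity in Remark~\ref{AdjDiscOp}, and that $\overline y-y_d$ carries enough regularity to activate \eqref{AprioriEsti}. Once these are in place the estimate is a routine combination of the already-established state error rate and the adjoint a priori bound.
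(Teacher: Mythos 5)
Your proposal is correct, and all of its ingredients (Theorem \ref{convStatesExactVsSemi}, Lemma \ref{LtauhAbsch} with Remark \ref{AdjDiscOp}, the estimate \eqref{AprioriEsti}, and the regularity $\overline y - y_d\in C^1(\overline I;H^1_0(\Omega))$) are established before Lemma \ref{GlmConvZ}, so there is no circularity. However, it takes a genuinely different route from the paper. The paper reduces to the same quantity $\|\overline p_\vartheta-\overline p\|_{C(\overline I;L^2(\Omega))}$ but then splits through $L_\vartheta^*(S(\overline v,\overline c))$ and $S_\vartheta(\overline v,\overline c)$, so that the surviving non-a-priori term is $\|S_\vartheta(\overline v_\vartheta,\overline c_\vartheta)-S_\vartheta(\overline v,\overline c)\|_{C(\overline I;L^2(\Omega))}\leq c\|\overline u_\vartheta-\overline u\|_{L^2(I)}$; this is closed with the \emph{control} convergence of Corollary \ref{corConvWeak*Control}, which comes from weak* compactness and carries no rate. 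You instead write $\overline p_\vartheta-\overline p=L_\vartheta^*(\overline y_\vartheta-\overline y)+(\hat p_\vartheta-\overline p)$ and close with the \emph{state} error of Theorem \ref{convStatesExactVsSemi}. Your decomposition is shorter (one split instead of two), bypasses Corollary \ref{corConvWeak*Control} entirely, and — since Theorem \ref{convStatesExactVsSemi} is quantitative — actually yields the stronger conclusion $\|z_\vartheta-z\|_{L^\infty(I)^m}=O(\tau+h)$ rather than mere convergence; the paper's argument, by contrast, only needs the qualitative $L^2(I)$ convergence of the controls and so would survive even in a setting where no state rate is available. One cosmetic remark: the constant in your reduction step is $\|g_i\|_{L^2(\Omega)}$ via Cauchy--Schwarz, where the paper uses $g_i\in L^\infty(\Omega)$ and the $L^1(\Omega)$-norm; both are fine since $\Omega$ is bounded.
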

\begin{proof}
By Theorem \ref{energy} and Definition \ref{discreteSolutdef}, we have that $z_\vartheta\in C(\overline{I})$ and $z\in C^1(\overline{I})$. Hence, $\|z_\vartheta - z\|_{L^\infty(I)^m}$ is well-defined. There holds that
\begin{multline}\label{finalfinal212313}
\|z_\vartheta - z\|_{L^\infty(I)^m}\
\leq c\sup_{t\in \overline{I}}\left\|L^*_{\vartheta}(S_{\vartheta}(\overline v_{\vartheta},\overline c_\vartheta)-y_d)(t,\cdot)-L^*(S(\overline v,\overline c)-y_d)(t,\cdot)\right\|_{L^1(\Omega)}\\
\leq c \left[\|L^*_{\vartheta}(S_{\vartheta}(\overline v_{\vartheta},\overline c_\vartheta )) -L^*(S(\overline v,\overline c))\|_{C(\overline{I};L^2(\Omega))}+\|L^*_{\vartheta}(y_d) -L^*(y_d)\|_{C(\overline{I};L^2(\Omega))}\right].
\end{multline}
Next we show that $\|z_\vartheta - z\|_{L^\infty(I)^m}\to 0$ holds. According to Theorem \ref{AprioriThm} we obtain
\begin{equation}\label{ydConvRate}
\|L^*_{\vartheta}(y_d) -L^*(y_d)\|_{C(\overline{I};L^2(\Omega))}\leq c(h^2+\tau^2)^\frac{1}{3}\|y_d\|_{C^1(\bar I;H^1_0(\Omega))}.
\end{equation}
Furthermore, we have
\begin{multline}\label{SKonvRatetoSvartheta}
\|L^*_{\vartheta}(S_{\vartheta}(\overline v_{\vartheta},\overline c_\vartheta)) -L^*(S(\overline v,\overline c))\|_{C(\overline{I};L^2(\Omega))}\leq\|L^*_{\vartheta}(S_\vartheta(\overline v_\vartheta,\overline c_\vartheta)) -L_{\vartheta}^*(S(\overline v,\overline c))\|_{C(\overline{I};L^2(\Omega))}\\
+\|L^*_{\vartheta}(S(\overline v,\overline c)) -L^*(S(\overline v,\overline c))\|_{C(\overline{I};L^2(\Omega))}
\end{multline}
For the second term on the right hand side of (\ref{SKonvRatetoSvartheta}), we have using Theorem \ref{AprioriThm}:
\begin{equation}\label{kaska2}
\|L^*_{\vartheta}(S(\overline v,\overline c)) -L^*(S(\overline v,\overline c))\|_{C(\overline{I};L^2(\Omega))} \leq	c(h^2+\tau^2)\|S(\overline v,\overline c)\|_{C^1(\bar I;H^1_0(\Omega))}.
\end{equation}
For the first term on the right hand side of (\ref{SKonvRatetoSvartheta}), we use the stability of $L_\vartheta$ and $L^\ast_\vartheta$ from Lemma \ref{LtauhAbsch} to obtain
\begin{multline}\label{Ineq44RHS}
\|L^*_{\vartheta}(S_\vartheta(\overline v_\vartheta,\overline c_\vartheta)) -	L_{\vartheta}^*(S(\overline v,\overline c))\|_{C(\overline{I};L^2(\Omega))}
\leq c\,\|S_\vartheta(\overline v_\vartheta,\overline c_\vartheta)-S(\overline v,\overline c)\|_{C(\overline{I};L^2(\Omega))} \\
\leq c\,\left(\|S_\vartheta(\overline v_\vartheta,\overline c_\vartheta)-S_\vartheta(\overline v,\overline c)\|_{C(\overline{I};L^2(\Omega))}+\|S_\vartheta(\overline v,\overline c)-S(\overline v,\overline c)\|_{C(\overline{I};L^2(\Omega))}\right) \\
\leq c\|\overline{u}_\vartheta-\overline{u}\|_{L^2(I)}
+\|S_\vartheta(\overline v,\overline c)-S(\overline v,\overline c)\|_{C(\overline{I};L^2(\Omega))}.
\end{multline}
The strong convergence of $\overline{u}_\vartheta$ to $\overline{u}$ in $L^2(I)$, see Corollary \ref{corConvWeak*Control}, and Theorem \ref{AprioriThm} imply that $\|L^*_{\vartheta}(S_\vartheta(\overline v_\vartheta,\overline c_\vartheta)) -
L_{\vartheta}^*(S(\overline v,\overline c))\|_{C(\overline{I};L^2(\Omega))}$ converges to 0 for $\vartheta \rightarrow 0$.  Hence, $\|z_\vartheta - z\|_{L^\infty(I)^m}$ converges to 0 for $\vartheta\rightarrow 0$.
\end{proof}
Let us further define $\|u\|_{C_\tau(I;L^2(\Omega))}:=\max_{t_i\in \overline{w}^\tau}\|u(t_i)\|_{L^2(\Omega)}$ for all $u\in C(\overline{I};L^2(\Omega))$ and the mesh operators
$\overline{\partial}_t w_m=\frac{w_m-w_{m-1}}{\tau}$.
\begin{lemma}\label{Ineq46RHS}
There exists a constant $c>0$ such that the following inequality holds for all $f\in L^1(I;L^2(\Omega))$
\begin{equation}
\|\overline{\partial_t}
L^*_\vartheta(f)
\|_{C_\tau(I;L^2(\Omega))}\leq
c\|f\|_{L^1(I;L^2(\Omega))}.
\end{equation}
\end{lemma}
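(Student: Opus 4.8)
The plan is to mirror, at the discrete level, the elementary continuous estimate for the wave operator. If $y=L(f)$ solves the wave equation with zero data, then by Duhamel and the spectral representation $\partial_t y(t)=\sum_{k\ge1}\bigl(\int_0^t\cos((t-s)\sqrt{\lambda_k})\langle f(s),\mu_k\rangle_{L^2(\Omega)}\,ds\bigr)\mu_k$, and since the cosine propagator $v\mapsto\sum_k\cos(\theta\sqrt{\lambda_k})\langle v,\mu_k\rangle\mu_k$ is a contraction on $L^2(\Omega)$, one obtains $\|\partial_t y(t)\|_{L^2(\Omega)}\le\int_0^t\|f(s)\|_{L^2(\Omega)}\,ds\le\|f\|_{L^1(I;L^2(\Omega))}$ \emph{without} any loss of spatial regularity. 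Since, by Remark \ref{AdjDiscOp}, $L^*_\vartheta$ is the time reversal $\widetilde\phi$ of $L_\vartheta$, the backward difference $\overline{\partial}_t L^*_\vartheta(f)$ at the nodes equals, up to the reflection, the forward difference quotient of $L_\vartheta(f\circ\widetilde\phi)$; hence it suffices to bound $\max_m\|\,\partial_t y_\vartheta|_{(t_{m-1},t_m)}\|_{L^2(\Omega)}$ for the forward discrete state $y_\vartheta=L_\vartheta(\tilde f)$, where $\partial_t y_\vartheta$ is piecewise constant in time and coincides with $(y_m-y_{m-1})/\tau\in S_h$ on each subinterval.

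For the discrete bound I would use duality in $L^2(\Omega)$ together with the stability machinery behind Lemma \ref{LtauhAbsch}. Denote by $a_\vartheta(\cdot,\cdot)$ the symmetric bilinear form on the left-hand side of \eqref{discreteSolutdefeq}. Fix a node $t_m$ and $\phi\in S_h$ with $\|\phi\|_{L^2(\Omega)}\le1$; since $\overline{\partial}_t y_\vartheta(t_m)\in S_h$, for the maximizing $\phi$ we have $\|\overline{\partial}_t y_\vartheta(t_m)\|_{L^2(\Omega)}=(\overline{\partial}_t y_\vartheta(t_m),\phi)_{L^2(\Omega)}$. Introduce a discrete dual state $\zeta\in\hat S_\vartheta$ defined through $a_\vartheta(w,\zeta)=(\overline{\partial}_t w(t_m),\phi)_{L^2(\Omega)}$ for all admissible $w\in\hat S_\vartheta$, i.e.\ $\zeta$ solves the discrete wave problem whose data realizes the velocity functional at $t_m$. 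Choosing $w=y_\vartheta$ and using \eqref{discreteSolutdefeq} gives $(\overline{\partial}_t y_\vartheta(t_m),\phi)_{L^2(\Omega)}=a_\vartheta(y_\vartheta,\zeta)=\int_0^T(\tilde f,\zeta)_{L^2(\Omega)}\,dt\le\|\tilde f\|_{L^1(I;L^2(\Omega))}\|\zeta\|_{C(\overline I;L^2(\Omega))}$. Taking the supremum over $\phi$ and the maximum over the nodes $t_m$ then yields the claim, \emph{provided} $\|\zeta\|_{C(\overline I;L^2(\Omega))}\le c$ uniformly in $\vartheta$, $m$ and $\phi$.

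Everything therefore hinges on this uniform bound for $\zeta$, which is the main obstacle. The dual state solves a discrete wave equation whose data is the impulsive ``velocity at $t_m$'' functional, the discrete counterpart of a $\delta'$-in-time datum, that is, an $L^2(\Omega)$ displacement $\phi$ released at $t_m$ and propagated by the free discrete evolution in both time directions from $t_m$. The content is then exactly that the discrete cosine propagator of the $\sigma$-scheme is bounded on $L^2(\Omega)$ uniformly in $\vartheta$: diagonalising \eqref{discreteSolutdefeq} in the generalised eigenbasis of the pair (stiffness, effective mass) on $S_h$ decouples it into scalar three-term recursions whose amplification factors have modulus one precisely because Assumption \ref{AssumpOntauh} holds, so the discrete displacement propagator and its difference quotient are bounded by a constant independent of the mode and of $\vartheta$; a discrete Duhamel summation then reproduces $\|\zeta\|_{C(\overline I;L^2(\Omega))}\le c\|\phi\|_{L^2(\Omega)}$. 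The delicate points are the correct bookkeeping of the stabilisation term $(\sigma-\tfrac16)\tau^2(\nabla\partial_t\cdot,\nabla\partial_t\cdot)$ and of the hat-function mass inside the effective mass matrix, and verifying that the velocity functional transposes to data controlled solely by $\|\phi\|_{L^2(\Omega)}$ and not by $\|\phi\|_{H^1_0(\Omega)}$, which the inverse inequality would inflate by $h^{-1}$; this last point is where Assumption \ref{AssumpOntauh} is indispensable.
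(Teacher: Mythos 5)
There is a genuine gap. Your duality reduction is sound as far as it goes: the time-reversal step via Remark \ref{AdjDiscOp} is correct, the dual problem $a_\vartheta(w,\zeta)=(\overline{\partial}_t w(t_m),\phi)_{L^2(\Omega)}$ is well posed (by the same dimension count that gives unique solvability of \eqref{discreteSolutdefeq}), and testing with $w=y_\vartheta$ does yield $\|\overline{\partial}_t y_\vartheta(t_m)\|_{L^2(\Omega)}\leq \|\tilde f\|_{L^1(I;L^2(\Omega))}\,\|\zeta\|_{C(\overline I;L^2(\Omega))}$. But this step only relocates the entire difficulty into the uniform bound $\|\zeta\|_{C(\overline I;L^2(\Omega))}\leq c\,\|\phi\|_{L^2(\Omega)}$, and that bound is asserted rather than proven. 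Your justification --- ``the amplification factors have modulus one, hence the discrete propagator and its difference quotient are uniformly bounded'' --- is not a valid inference for three-term recursions: unit-modulus roots that nearly coalesce (which happens for the high modes $\tau^2\lambda_{k,h}$ approaching the stability threshold) produce solution growth proportional to the number of time steps, with coefficients involving $1/\sin\theta_k$; ruling this out requires exactly the quantitative energy arguments with the $\varepsilon_0$-margin of Assumption \ref{AssumpOntauh}. Likewise, the two points you flag at the end --- that the stabilization term $(\sigma-\tfrac16)\tau^2(\nabla\partial_t\cdot,\nabla\partial_t\cdot)$ is \emph{not} controlled by $L^2(\Omega)$ quantities when $\sigma\geq\tfrac14$ and $\tau\gg h$, and that the velocity functional must transpose into discrete data measured only by $\|\phi\|_{L^2(\Omega)}$ without an $h^{-1}$ inflation --- are not peripheral ``bookkeeping''; they are the substance of the discrete stability theory for this scheme. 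What remains unproven in your argument is therefore essentially equivalent to the lemma itself.

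For comparison, the paper does not reprove any of this: its proof is a one-line citation of \cite[Theorem 2.1]{[Zlot]}, the same stability theorem behind Lemma \ref{LtauhAbsch}, which directly contains the estimate $\|\overline{\partial}_t y_\vartheta\|_{C_\tau(I;L^2(\Omega))}\leq c\,\|f\|_{L^1(I;L^2(\Omega))}$ for the scheme \eqref{discreteSolutdefeq} under Assumption \ref{AssumpOntauh}; the adjoint case follows by the time reversal of Remark \ref{AdjDiscOp}, exactly as in your first paragraph. So your proposal, if completed, would amount to reproving Zlotnik's stability theorem by spectral decomposition; as written, it stops precisely where that theorem's work begins.
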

\begin{proof}
This follows directly from \cite[Theorem 2.1]{[Zlot]}.
\end{proof}
\begin{lemma}\label{aprioridiff}
The following a priori error estimate
\[
\|\overline{\partial_t}(L^\ast(f)-L^*_\vartheta(f))\|_{C_\tau(I;L^2(\Omega))}\leq (h^2+\tau^2)^{\frac 13}\|f\|_{W^{1,1}(I;L^2(\Omega))}
\]
holds for all $f\in W^{1,1}(I;L^2(\Omega))$.
\end{lemma}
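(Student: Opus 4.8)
The plan is to combine the discrete velocity stability of Lemma~\ref{Ineq46RHS} with the full--order a~priori estimate of Theorem~\ref{AprioriThm}, along the projection argument used for \cite[Theorem~4.4]{[VexPie]}. The key point is that, although $f$ carries no spatial smoothness, the adjoint state $L^*(f)$ does: since $f\in W^{1,1}(I;L^2(\Omega))=W^{1,1}(I;\mathbb H^0)$ and $L^*(f)$ solves $(\mathcal W^*)$ with homogeneous terminal data $(0,0)\in\mathbb H^2\times\mathbb H^1$, estimate \eqref{energy3} of Theorem~\ref{energy} with $\alpha=0$ gives $L^*(f)\in C(\overline I;\mathbb H^2)$ and $\partial_t L^*(f)\in C(\overline I;\mathbb H^1)$ together with $\|L^*(f)\|_{C(\overline I;\mathbb H^2)}+\|\partial_t L^*(f)\|_{C(\overline I;\mathbb H^1)}\le c\,\|f\|_{W^{1,1}(I;L^2(\Omega))}$; by Remark~\ref{AdjDiscOp} all discrete estimates apply to $L^*_\vartheta$ as well. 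This is the regularity I would feed into the error analysis.

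Introducing the space--time projection $P_\vartheta L^*(f):=R_h I_\tau L^*(f)\in\hat{S}_{\vartheta}$, with $I_\tau$ the piecewise linear nodal interpolation in time onto $S_\tau$ and $R_h$ the (time--independent) Ritz projection in space, I would split
\[
\overline{\partial_t}\bigl(L^*(f)-L^*_\vartheta(f)\bigr)=\overline{\partial_t}\bigl(L^*(f)-P_\vartheta L^*(f)\bigr)+\overline{\partial_t}\bigl(P_\vartheta L^*(f)-L^*_\vartheta(f)\bigr).
\]
For the first term I use that $I_\tau$ is exact at the time nodes, so at $t_m$ it equals the discrete derivative of the purely spatial Ritz error $L^*(f)-R_hL^*(f)$; since $R_h$ commutes with $\partial_t$, each nodal value is the $\tau$--average over $[t_{m-1},t_m]$ of $\partial_t L^*(f)-R_h\partial_t L^*(f)$, which by the Ritz projection estimate of Assumption~\ref{AssumpOntauh} and the regularity $\partial_t L^*(f)\in C(\overline I;\mathbb H^1)$ is bounded by $c\,h\,\|\partial_t L^*(f)\|_{C(\overline I;\mathbb H^1)}\le c\,h\,\|f\|_{W^{1,1}(I;L^2(\Omega))}$.

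For the second term I note that $\xi_\vartheta:=P_\vartheta L^*(f)-L^*_\vartheta(f)\in\hat{S}_{\vartheta}$ has vanishing terminal value and solves the discrete equation \eqref{discreteSolutdefeq} (in its time--reversed adjoint form) with a residual $r_\vartheta$ quantifying the consistency of $P_\vartheta L^*(f)$; hence $\xi_\vartheta=L^*_\vartheta(r_\vartheta)$ and Lemma~\ref{Ineq46RHS} yields $\|\overline{\partial_t}\xi_\vartheta\|_{C_\tau(I;L^2(\Omega))}\le c\,\|r_\vartheta\|_{L^1(I;L^2(\Omega))}$. It then remains to estimate the residual obtained by inserting $P_\vartheta L^*(f)$ into the discrete form: the spatial contribution is of higher order thanks to the Ritz projection and $L^*(f)\in C(\overline I;\mathbb H^2)$, while the temporal and stabilization contributions, controlled only through $\partial_{tt}L^*(f)\in C(\overline I;L^2(\Omega))$, are of order $\tau$, so that $\|r_\vartheta\|_{L^1(I;L^2(\Omega))}\le c\,(h^2+\tau)\,\|f\|_{W^{1,1}(I;L^2(\Omega))}$. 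Adding both contributions gives a bound of order $(h+\tau)\|f\|_{W^{1,1}(I;L^2(\Omega))}$, and since $h+\tau\le c\,(h^2+\tau^2)^{1/3}$ for small $h,\tau$ the stated (non--sharp) estimate follows. I expect the main obstacle to be precisely this last step: writing $\xi_\vartheta$ as a discrete solution so that Lemma~\ref{Ineq46RHS} applies, and bounding the consistency residual $r_\vartheta$, since the limited time regularity ($\partial_{tt}L^*(f)$ only in $L^2(\Omega)$) caps the attainable order and the stabilization term in \eqref{discreteSolutdefeq} has to be tracked with care.
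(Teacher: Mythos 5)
Your proposal takes a genuinely different route from the paper: the paper's entire proof of this lemma is a citation of \cite[Theorem 4.2]{[Zlot]}, where this fractional-order estimate is established, whereas you attempt a self-contained projection/consistency argument. Unfortunately the self-contained argument has a genuine gap at exactly the step you flag as the main obstacle, and I do not believe it can be closed as sketched. The claim that the temporal and stabilization contributions to the residual $r_\vartheta$ are of order $\tau$ while ``controlled only through $\partial_{tt}L^*(f)\in C(\overline I;L^2(\Omega))$'' is not justified --- and is, in substance, what the fractional exponent $1/3$ is telling you cannot be done. From $f\in W^{1,1}(I;L^2(\Omega))$ you correctly get $L^*(f)\in C(\overline I;\mathbb H^2)$, $\partial_t L^*(f)\in C(\overline I;\mathbb H^1)$, $\partial_{tt}L^*(f)\in C(\overline I;L^2(\Omega))$; but first-order $O(h+\tau)$ convergence of the discrete velocity $\overline{\partial_t}$ for this scheme requires roughly one more order of regularity (say $\partial_{tt}L^*(f)\in C(\overline I;\mathbb H^1)$ or $f\in W^{2,1}(I;L^2(\Omega))$). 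Under the stated regularity the temporal consistency terms are merely $o(1)$, not $O(\tau)$: for instance, after the summation by parts in time that is needed to test the residual against $\eta$ rather than $\partial_t\eta$, the relevant second differences of $L^*(f)$ are only $O\bigl(\tau^2\|\partial_{tt}L^*(f)\|_{C(\overline I;L^2(\Omega))}\bigr)$ per node with no spatial gain, and summing over the $T/\tau$ nodes destroys the rate. Note also that if your argument were correct it would prove an $O(h+\tau)$ bound under $W^{1,1}(I;L^2(\Omega))$ data, which is strictly \emph{stronger} than the stated $(h^2+\tau^2)^{1/3}$ bound and stronger than what Zlotnik's sharp theory gives at this regularity level --- a warning sign in itself.

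There is a second, related gap: to write $\xi_\vartheta=L^*_\vartheta(r_\vartheta)$ and invoke Lemma \ref{Ineq46RHS} you need the consistency residual to be realized as an $L^1(I;L^2(\Omega))$ function paired against $\eta$ itself. But the residual of $P_\vartheta L^*(f)$ in \eqref{discreteSolutdefeq} naturally pairs against $\partial_t\eta$, $\nabla\eta$ and $\nabla\partial_t\eta$ (stabilization term); converting these pairings into one against $\eta$ costs either the time summation by parts above (with the loss just described) or inverse inequalities producing factors $\tau^{-1}$, $h^{-1}$. The mechanism that actually produces the exponent $1/3$ in \cite[Theorem 4.2]{[Zlot]} is interpolation of operators: one interpolates between the stability estimate of Lemma \ref{Ineq46RHS} (order $0$, data in $L^1(I;L^2(\Omega))$) and a full $(h^2+\tau^2)$ estimate valid under three additional orders of regularity, and $W^{1,1}(I;L^2(\Omega))$ sits one third of the way along this scale. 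Your regularity bookkeeping (via \eqref{energy3} with $\alpha=0$ and Remark \ref{AdjDiscOp}) and the Ritz-projection part of the splitting are sound, but the argument as a whole does not go through without this interpolation machinery --- which is exactly why the paper defers to Zlotnik rather than giving a direct proof.
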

\begin{proof}
This follows directly from \cite[Theorem 4.2]{[Zlot]}.
\end{proof}
\begin{lemma}
We have
\begin{equation}\label{ConvDiffz}
\|\overline{\partial_t}(z_\vartheta - z)\|_{C_\tau(I)^m}\xrightarrow{} 0\quad \text{for}\quad \vartheta \to 0.
\end{equation}
\end{lemma}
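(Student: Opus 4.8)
The plan is to mirror the proof of Lemma~\ref{GlmConvZ}, but to replace the plain $C(\overline I;L^2(\Omega))$ convergence estimates used there by the discrete-derivative estimates of Lemmas~\ref{Ineq46RHS} and \ref{aprioridiff}. First I would observe that the mesh operator $\overline{\partial_t}$ acts only on the time variable and therefore commutes with the spatial pairing against $g$. Writing $S:=S(\overline v,\overline c)$ and $S_\vartheta:=S_\vartheta(\overline v_\vartheta,\overline c_\vartheta)=\overline y_\vartheta$, for each mesh point $t_m\in\overline{w}^\tau$ and each component $i$ one then has
\[
\overline{\partial_t}(z_{\vartheta,i}-z_i)(t_m)=\int\nolimits_\Omega \overline{\partial_t}\big[L^*_\vartheta(S_\vartheta-y_d)-L^*(S-y_d)\big](t_m,x)\,g_i(x)\,dx .
\]
By Cauchy--Schwarz together with $g\in(\mathbb H^2)^m\subset L^2(\Omega)^m$, this reduces the claim \eqref{ConvDiffz} to showing
\[
\big\|\overline{\partial_t}\big[L^*_\vartheta(S_\vartheta-y_d)-L^*(S-y_d)\big]\big\|_{C_\tau(I;L^2(\Omega))}\to 0 .
\]

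Next, using the linearity of $L^*$ and $L^*_\vartheta$, I would split the bracket as
\[
\underbrace{L^*_\vartheta(S_\vartheta-S)}_{(\mathrm I)}+\underbrace{\big(L^*_\vartheta(S)-L^*(S)\big)}_{(\mathrm{II})}-\underbrace{\big(L^*_\vartheta(y_d)-L^*(y_d)\big)}_{(\mathrm{III})}.
\]
For term $(\mathrm I)$ I would apply the stability estimate of Lemma~\ref{Ineq46RHS} with $f=S_\vartheta-S=\overline y_\vartheta-\overline y$, giving $\|\overline{\partial_t}(\mathrm I)\|_{C_\tau(I;L^2(\Omega))}\le c\|\overline y_\vartheta-\overline y\|_{L^1(I;L^2(\Omega))}\le c\sqrt T\,\|\overline y_\vartheta-\overline y\|_{L^2(\Omega_T)}$, which tends to $0$ by Theorem~\ref{convStatesExactVsSemi}. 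For the two discretization-error terms $(\mathrm{II})$ and $(\mathrm{III})$ I would invoke the a priori estimate of Lemma~\ref{aprioridiff}, obtaining bounds of order $(h^2+\tau^2)^{1/3}$ times $\|S\|_{W^{1,1}(I;L^2(\Omega))}$ and $\|y_d\|_{W^{1,1}(I;L^2(\Omega))}$, respectively, which both vanish as $\vartheta\to0$.

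It then remains to verify the regularity hypotheses of Lemma~\ref{aprioridiff}. The datum satisfies $y_d\in C^1(\overline I;H^1_0(\Omega))\hookrightarrow W^{1,1}(I;L^2(\Omega))$ by assumption, so $(\mathrm{III})$ is admissible. For $(\mathrm{II})$ I would argue, exactly as in Lemma~\ref{GlmConvZ}, that $S=L(B(\overline v,\overline c))+Q(y_0,y_1)\in C^1(\overline I;H^1_0(\Omega))$: by Corollary~\ref{cor:Breg} the forcing $B(\overline v,\overline c)\in L^\infty(I;\mathbb H^2)$, and since $(y_0,y_1)\in\mathbb H^3\times\mathbb H^2$, the energy estimate \eqref{energy2} of Theorem~\ref{energy} (taken with $\alpha=1$) yields $S\in C(\overline I;\mathbb H^2)$ with $\partial_t S\in C(\overline I;H^1_0(\Omega))$; in particular $S\in C^1(\overline I;L^2(\Omega))\hookrightarrow W^{1,1}(I;L^2(\Omega))$. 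Collecting the three estimates, taking the maximum over $t_m\in\overline{w}^\tau$, and letting $\vartheta\to0$ then gives \eqref{ConvDiffz}.

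The main obstacle I anticipate is not analytical depth but careful bookkeeping. One must check that the discrete mesh derivative $\overline{\partial_t}$ genuinely commutes with the spatial pairing against $g$, so that the vector-valued estimates of Lemmas~\ref{Ineq46RHS} and \ref{aprioridiff} apply componentwise verbatim, and one must confirm that the continuous optimal state $S(\overline v,\overline c)$ has enough regularity in time to lie in $W^{1,1}(I;L^2(\Omega))$ — which is precisely where the time-one-dimensional structure and the $\mathbb H^2$ assumption on $g$ enter. Note that, in contrast with Lemma~\ref{GlmConvZ}, the convergence of term $(\mathrm I)$ is here driven directly by the $L^2(\Omega_T)$ state error of Theorem~\ref{convStatesExactVsSemi}, using the identity $S_\vartheta-S=\overline y_\vartheta-\overline y$, rather than through $\|\overline u_\vartheta-\overline u\|_{L^2(I)}$.
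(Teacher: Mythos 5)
Your proof is correct and follows essentially the same route as the paper's: reduce the claim via the pairing with $g$ to an estimate in $C_\tau(I;L^2(\Omega))$, then split into a stability term treated with Lemma \ref{Ineq46RHS} and discretization-error terms treated with Lemma \ref{aprioridiff}, checking $y_d,\,S(\overline v,\overline c)\in W^{1,1}(I;L^2(\Omega))$ exactly as the paper does. Your only (harmless) deviations are splitting $L^*_\vartheta(S-y_d)-L^*(S-y_d)$ into two pieces by linearity, and driving the stability term to zero through the $L^2(\Omega_T)$ state error of Theorem \ref{convStatesExactVsSemi} instead of through \eqref{Ineq44RHS}; both results are available at this point without circularity, so either argument works.
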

\begin{proof}
Lemma \ref{Ineq46RHS} implies
\begin{multline}\label{Ineq47RHS}
\|\overline{\partial_t}(z_\vartheta - z)\|_{C_\tau(I)^m}
=\sum\nolimits_{\ell=1}^m\sup_{t_i\in \overline{w}^\tau} \left\vert
\overline{\partial_t}\int\nolimits_\Omega \left(
L^*_\vartheta(S_\vartheta(\overline{v}_\vartheta,\overline{c}_\vartheta)-y_d)(t_i)-L^*(S(\overline{v},\overline{c})-y_d)(t_i)
\right)g_\ell ~dx
\right\vert\\
\leq c\|\overline{\partial_t}\left(
L^*_\vartheta(S_\vartheta(\overline{v}_\vartheta,\overline{c}_\vartheta)-y_d)-L^*(S(\overline{v},\overline{c})-y_d)
\right)\|_{C_\tau(I;L^2(\Omega))}\\
\leq
c\|\overline{\partial_t}\left(
L^*_\vartheta(S_\vartheta(\overline{v}_\vartheta,\overline{c}_\vartheta)-y_d)-L^*_\vartheta(S(\overline{v},\overline{c})-y_d)
\right)\|_{C_\tau(I;L^2(\Omega))}\\
+c\|\overline{\partial_t}\left(
L^*_\vartheta(S(\overline{v},\overline{c})-y_d)-L^*(S(\overline{v},\overline{c})-y_d)
\right)\|_{C_\tau(I;L^2(\Omega))}
\\
\leq c\|S_\vartheta(\overline{v}_\vartheta,\overline{c}_\vartheta)-S(\overline{v},\overline{c})\|_{L^1(I;L^2(\Omega))}\\
+c\|\overline{\partial_t}\left(
L^*_\vartheta(S(\overline{v},\overline{c})-y_d)-L^*(S(\overline{v},\overline{c})-y_d)
\right)\|_{C_\tau(I;L^2(\Omega))}
\end{multline}
The first term on the right hand side of the last inequality converges to 0 for $\vartheta \rightarrow 0 $, e.g. see (\ref{Ineq44RHS}). Because $y_d$ and $S(\overline v,\overline c)\in C^1(\overline{I};L^2(\Omega))$ holds Lemma \ref{aprioridiff} implies that the last term in the last inequality converges to 0 for $\vartheta \rightarrow 0 $. This proves the assertion.
\end{proof}
\begin{lemma}\label{ConvDiffLinfty}
We have
\begin{equation}
\|\partial_t(z_\vartheta - z)\|_{L^\infty(I)^m}\to 0 \text{ for }\vartheta \to 0.
\end{equation}
\end{lemma}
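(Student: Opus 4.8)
The plan is to exploit the piecewise-in-time structure of $z_\vartheta$, which turns its continuous time derivative into exactly the nodal difference quotient $\overline{\partial_t}$ already controlled in \eqref{ConvDiffz}, and then to invoke the $C^1$-regularity of $z$ to pass from nodal difference quotients of $z$ to its pointwise derivative. In this way the statement follows by combining the previous lemma with the uniform continuity of $\partial_t z$.

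First I would record the following regularity observation. By Remark \ref{AdjDiscOp} and Definition \ref{discreteSolutdef}, the discrete adjoint state $\overline{p}_\vartheta=L^*_\vartheta(S_\vartheta(\overline v_\vartheta,\overline c_\vartheta)-y_d)$ belongs to $\hat{S}_\vartheta$ and is therefore continuous and piecewise linear in time. Consequently each component $z_{\vartheta,\ell}(t)=\int_\Omega \overline{p}_\vartheta(t)g_\ell~dx$ is continuous and piecewise linear with respect to the grid $\overline{w}^\tau$, so that its time derivative is piecewise constant and satisfies, on every subinterval $(t_{m-1},t_m)$,
\[
\partial_t z_\vartheta(t)=\frac{z_\vartheta(t_m)-z_\vartheta(t_{m-1})}{\tau}=\overline{\partial_t}z_\vartheta(t_m).
\]
This exact identity is the crux of the argument: it replaces the genuinely continuous object $\partial_t z_\vartheta$ by a quantity of the discrete type handled in \eqref{ConvDiffz}.

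Next I would fix $t\in(t_{m-1},t_m)$ and decompose, componentwise,
\[
\partial_t(z_\vartheta - z)(t)=\bigl(\overline{\partial_t}z_\vartheta(t_m)-\overline{\partial_t}z(t_m)\bigr)+\bigl(\overline{\partial_t}z(t_m)-\partial_t z(t)\bigr).
\]
The first bracket is bounded in modulus by $\|\overline{\partial_t}(z_\vartheta-z)\|_{C_\tau(I)^m}$, which tends to $0$ by \eqref{ConvDiffz}. For the second bracket I would use that $z\in C^1(\overline I)^m$, as established in the proof of Lemma \ref{GlmConvZ}: by the mean value theorem $\overline{\partial_t}z(t_m)=\partial_t z(\xi_m)$ for some $\xi_m\in(t_{m-1},t_m)$, and since $\partial_t z$ is uniformly continuous on the compact interval $\overline I$ with $|\xi_m-t|\le \tau$, the term $|\partial_t z(\xi_m)-\partial_t z(t)|$ is bounded by the modulus of continuity of $\partial_t z$ at $\tau$, uniformly in $m$ and in $t$. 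Hence the second bracket tends to $0$ as $\tau\to 0$.

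Combining both bounds, taking the supremum over $t\in I$, and summing over the $m$ components yields $\|\partial_t(z_\vartheta-z)\|_{L^\infty(I)^m}\to 0$. The only step requiring genuine care is the first observation, namely that $\partial_t z_\vartheta$ coincides pointwise with the nodal difference quotient $\overline{\partial_t}z_\vartheta$; this relies entirely on the piecewise-linear-in-time structure of discrete adjoint states, and once it is in place the remainder is a routine combination of \eqref{ConvDiffz} with the uniform continuity of $\partial_t z$.
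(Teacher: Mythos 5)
Your proof is correct and follows essentially the same route as the paper: both arguments exploit the piecewise linearity of $z_\vartheta$ in time to identify $\partial_t z_\vartheta$ on each cell with the nodal difference quotient, invoke \eqref{ConvDiffz} for the difference of the two difference quotients, and use the uniform continuity of $\partial_t z$ on $\overline I$ for the remaining consistency term. The only deviation is minor but a genuine simplification: where the paper splits the consistency term $\overline{\delta_t}z - \partial_t z$ into $\overline{\delta_t}z - \partial_t z(t_i)$ (handled by citing \cite[Theorem 1.11]{anastassiou2017intelligent}) plus $\partial_t z(t_i) - \partial_t z(t)$, you collapse both pieces into one via the mean value theorem, $\overline{\partial_t}z(t_m)=\partial_t z(\xi_m)$ with $\vert\xi_m-t\vert\leq\tau$, which makes the argument self-contained and avoids the external reference.
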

\begin{proof}
At first we define a cell-wise discretization of the derivative of $z$ as follows
\begin{equation}\label{fwa}
\overline{\delta_t}z:= \sum\nolimits_{i=1}^M \frac{z(t_i)-z(t_{i-1})}{\tau} \mathbf{1}_{I_i}, \text{ with }I_i:=(t_{i-1},t_i), \text{ }i=1,\cdots,M.
\end{equation}
Then we proceed with
\begin{equation*}
\|\partial_t (z_\vartheta-z)\|_{L^\infty(I)^m}\leq\|\partial_t z_\vartheta - \overline{\delta_t}z\|_{L^\infty(I)^m}+\|\overline{\delta_t}z - \partial_t z\|_{L^\infty(I)^m}.\quad
\end{equation*}
Using the disjoint supports of the characteristic functions in the definition of $\overline{\delta_t}z$ leads to
\begin{equation*}
\|\partial_t z_\vartheta - \overline{\delta_t}z\|_{L^\infty(I)^m}=\sum_{j=1}^m\max_{i=1,\cdots,M} \left\vert \frac{z_\vartheta^j(t_i)-z_\vartheta^j(t_{i-1})}{\tau} -\frac{z_j(t_i)-z_j(t_{i-1})}{\tau}\right\vert=\|\overline{\partial}_t(z_\vartheta -z)\|_{C_\tau(I)^m}
\end{equation*}	
which converges to $0$ under the consideration of (\ref{ConvDiffz}).
Further, calculations show that
\begin{multline*}
\|\overline{\delta_t}z - \partial_t z\|_{L^\infty(I)^m}=\left\|\sum\nolimits_{i=1}^M \left(\frac{z(t_i)-z(t_{i-1})}{\tau} - \partial_tz(t)\right)\mathbf{1}_{I_i}(t)\right\|_{L^\infty(I)^m}\\
\leq\left\|\sum\nolimits_{i=1}^M \left(\frac{z(t_i)-z(t_{i-1})}{\tau} - \partial_t z(t_i)\right)\mathbf{1}_{I_i}(t)\right\|_{L^\infty(I)^m}+
\left\|\sum\nolimits_{i=1}^M \left(\partial_t z(t_i) - \partial_t z(t)\right)\mathbf{1}_{I_i}(t)\right\|_{L^\infty(I)^m}\\
=\sum_{j=1}^m\left(\max_{i=1,\cdots,M}\left\vert\frac{z_j(t_i)-z_j(t_{i-1})}{\tau} - \partial_t z_j(t_i)\right\vert+
\max_{i=1,\cdots,M}\sup_{t\in I_i}\vert \partial_t z_j(t_i) - \partial_t z_j(t)\vert\right).
\end{multline*}
In the last equation, we directly see that the first term converges to $0$ due to \cite[Theorem 1.11]{anastassiou2017intelligent}. The second term converges to $0$ due to the uniform continuity of $\partial_tz(t)$ in $\overline{I}$.
Hence, the result follows for $\vartheta \rightarrow 0$, which implies the claim.
\end{proof}
\begin{lemma}\label{Ineq49RHS}
	The convergence $\|p_{1,\vartheta} - p_1\|_{L^\infty(I)^m} \rightarrow 0$ holds for $\vartheta\rightarrow 0$.
\end{lemma}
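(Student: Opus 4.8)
The plan is to exploit that $z$ and $z_\vartheta$ are exactly the time derivatives of $p_1$ and $p_{1,\vartheta}$, so that the claimed uniform convergence is just an integrated version of the already-established Lemma \ref{GlmConvZ}. First I would record the two structural facts that make this work. By the very definition $p_{1,i}(t)=-\int_t^T\int_\Omega \overline p g_i\,dx\,ds$ one has $p_{1,i}(T)=0$ and $\partial_t p_{1,i}=z_i$, and in the same way $p_{1,\vartheta,i}(T)=0$ with $\partial_t p_{1,\vartheta,i}=z_{\vartheta,i}$, for every $i=1,\dots,m$.

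Second, by the fundamental theorem of calculus these identities give the representations $p_{1,i}(t)=-\int_t^T z_i(s)\,ds$ and $p_{1,\vartheta,i}(t)=-\int_t^T z_{\vartheta,i}(s)\,ds$, so that for each component
\[
p_{1,\vartheta,i}(t)-p_{1,i}(t)=-\int\nolimits_t^T\bigl(z_{\vartheta,i}(s)-z_i(s)\bigr)\,ds.
\]
Taking absolute values, estimating the integrand by its supremum over $I$, and using $T-t\le T$ yields the uniform bound $\lvert p_{1,\vartheta,i}(t)-p_{1,i}(t)\rvert\le T\,\|z_{\vartheta,i}-z_i\|_{L^\infty(I)}$ for all $t\in I$.

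Finally I would take the supremum over $t\in I$ and sum over $i$ to conclude
\[
\|p_{1,\vartheta}-p_1\|_{L^\infty(I)^m}\le T\,\|z_\vartheta-z\|_{L^\infty(I)^m},
\]
and then invoke Lemma \ref{GlmConvZ}, which states that the right-hand side tends to $0$ as $\vartheta\to0$. There is no genuine obstacle here: all the analytic work, namely controlling the finite element error of the adjoint states uniformly in time, has already been carried out in the proof of Lemma \ref{GlmConvZ}, and the present statement follows from it by a one-line integration argument. The only point that requires a moment's care is the boundary value at $t=T$, which guarantees that $p_1$ and $p_{1,\vartheta}$ are honest antiderivatives of $z$ and $z_\vartheta$ with no spurious additive constant, so that the uniform convergence of the derivatives transfers directly to the functions themselves.
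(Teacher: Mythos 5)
Your proof is correct, and it takes a genuinely (if mildly) different route from the paper's. The paper bounds $\|p_{1,\vartheta}-p_1\|_{L^\infty(I)^m}$ directly by $c\,\|L^*_\vartheta(S_\vartheta(\overline v_\vartheta,\overline c_\vartheta)-y_d)-L^*(S(\overline v,\overline c)-y_d)\|_{C(\overline I;L^2(\Omega))}$ (Cauchy--Schwarz in space together with the boundedness of $I$) and then argues that this adjoint-state error tends to zero by \emph{repeating the steps inside the proof} of Lemma \ref{GlmConvZ}, i.e.\ the estimates \eqref{ydConvRate}--\eqref{Ineq44RHS}. You instead use only the \emph{statement} of Lemma \ref{GlmConvZ}: since $z=\partial_t p_1$ and $z_\vartheta=\partial_t p_{1,\vartheta}$ by definition, and since both $p_1$ and $p_{1,\vartheta}$ vanish at $t=T$ by their defining integral formulas, the fundamental theorem of calculus yields $\|p_{1,\vartheta}-p_1\|_{L^\infty(I)^m}\le T\,\|z_\vartheta-z\|_{L^\infty(I)^m}\rightarrow 0$. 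Both arguments rest on the same underlying finite element analysis, and neither produces a rate here (none is claimed, since the convergence $\overline u_\vartheta\to\overline u$ in $L^2(I)^m$ enters without a rate). What your version buys is modularity: Lemma \ref{GlmConvZ} is used as a black box and no estimates are replayed; the one structural point you correctly flag, the boundary value $p_1(T)=p_{1,\vartheta}(T)=0$, is exactly what rules out a spurious additive constant and makes the integration argument legitimate. What the paper's version buys is that the bound stays expressed through $\|\overline p_\vartheta-\overline p\|_{C(\overline I;L^2(\Omega))}$, the same quantity that controls $z_\vartheta-z$ in the first display of Lemma \ref{GlmConvZ}'s proof, which is the natural object to track if one later wanted a quantitative version of this convergence.
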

\begin{proof}
This follows directly from
\[
\|p_{1,\vartheta} - p_1\|_{L^\infty(I)^m} \leq c\|L_\vartheta^*(S_\vartheta(\overline{v}_\vartheta,\overline c_\vartheta)-y_d)-L^*(S(\overline{v},\overline c)-y_d)\|_{C(\overline{I};L^2(\Omega))},
\]
 which converges to $0$ using the same steps as in Lemma \ref{GlmConvZ}.
\end{proof}
In order to proof a priori error estimates for the control in the $L^1(I)$-norm and higher convergence rates for the state variable we have to make the following assumption.
\begin{assumption}\label{as:jumps}
\begin{itemize}
\item[(A1)] $\lbrace t\in I\vert \vert \overline p_{1,i}(t)\vert=\alpha\rbrace = \lbrace t_{1,i}, \cdots , t_{m_i,i}\rbrace$ for $m_i\in \mathbb{N}$, with $i=1,\cdots, m$.\label{assumptionFiniteJumps1}
\item[(A2)] $\partial_t z_i(t_{j,i})\neq 0$, for $j=1,\cdots,m_i$ and $i=1,\cdots,m$.\label{assumptionFiniteJumps2}
\end{itemize}
\end{assumption}
\begin{remark}
The assumption (A1) enforces finitely many jumps for the optimal control of $(P)$, i.e. it holds $\supp(D_t \overline{u}_i)\subseteq \lbrace t_{1,i}, \cdots , t_{m_i,i}\rbrace$ for $\overline{u}\in BV(I)^m$.
\end{remark}
\begin{lemma}\label{coalaLemma} Let $(\overline{v}_\vartheta,\overline{c}_\vartheta)$ be an optimal control of $(\widetilde{P}^{semi}_\vartheta)$.
Under the assumptions (A1) and (A2) above, there exists a $\delta >0$, and $\vartheta_0 >0$ such that for all $0<\vartheta \leq \vartheta_0$ holds
\begin{equation*}
\overline{v}_{\vartheta,i}=\sum\nolimits_{l=1}^{m_i} c_{l,\vartheta}^i\delta_{t_{l,\vartheta}^i} \text{ with }c_{l,\vartheta}^i\in \mathbb{R} \text{ and }
t_{l,\vartheta}^i\in B_\delta(t_{l,i}),
\end{equation*}
where $B_\delta(t_{l,i})$ are pairwise disjoint for a fixed $i=1,\cdots,m$ with respect to the index $l=1,\cdots,m_i$ and with $0<\vartheta\rightarrow 0$. The coefficients in front of the Dirac measures of $\overline{v}_{\vartheta,i}$, i.e. $c_{l,\vartheta}^i$ for $l=1,\cdots,m_i$, are possibly $0$.
\end{lemma}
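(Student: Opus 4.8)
The plan is to localize the analysis around the finitely many contact points $\{t_{1,i},\dots,t_{m_i,i}\}$ furnished by (A1), and to show that near each such point the piecewise-quadratic discrete multiplier $p_{1,\vartheta,i}$ mimics the continuous one, while away from these points the discrete control cannot be active. First I would fix the radius $\delta>0$. Since $\overline p_{1,i}(0)=\overline p_{1,i}(T)=0$ and $\alpha_i>0$, each $t_{l,i}$ lies in $\Int(I)$, so by (A1) together with Remark \ref{fsa} one may choose $\delta$ so small that the balls $B_\delta(t_{l,i})$, $l=1,\dots,m_i$, are pairwise disjoint and contained in $\Int(I)$. Because $z\in C^1(\overline I)$ (see the proof of Lemma \ref{GlmConvZ}) and $\partial_t z_i(t_{l,i})\neq 0$ by (A2), shrinking $\delta$ further guarantees that $\partial_t z_i$ keeps a fixed sign with $|\partial_t z_i|\geq \eta>0$ on each $B_\delta(t_{l,i})$. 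Finally, on the compact complement $K:=\overline I\setminus\bigcup_l B_\delta(t_{l,i})$ we have $|\overline p_{1,i}|\leq \alpha_i-2\epsilon$ for some $\epsilon>0$, as by (A1) the equality $|\overline p_{1,i}|=\alpha_i$ holds only at the excluded points.

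Next I would rule out activity of the discrete control on $K$. By Lemma \ref{Ineq49RHS} we have $\|p_{1,\vartheta,i}-\overline p_{1,i}\|_{L^\infty(I)}\to 0$, hence for all $\vartheta\leq \vartheta_0$ with $\vartheta_0$ small enough it holds $|p_{1,\vartheta,i}|<\alpha_i-\epsilon<\alpha_i$ on $K$. The discrete analogue of Proposition \ref{support_measure_cor}\,c), valid by Remark \ref{RauteRemark}, states $\supp(\overline v_{\vartheta,i}^\pm)\subseteq\{t: p_{1,\vartheta,i}(t)=\pm\alpha_i\}$, so the support of $\overline v_{\vartheta,i}$ is contained in $\bigcup_{l}B_\delta(t_{l,i})$.

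It then remains to bound the number of support points inside each ball. Since $p_{1,\vartheta,i}$ is continuously differentiable and piecewise quadratic, $z_{\vartheta,i}$ is continuous and piecewise affine, with piecewise-constant a.e.\ derivative $\partial_t z_{\vartheta,i}$. Invoking the uniform convergence $\|\partial_t z_\vartheta-\partial_t z\|_{L^\infty(I)^m}\to 0$ from Lemma \ref{ConvDiffLinfty}, after possibly decreasing $\vartheta_0$ we obtain $|\partial_t z_{\vartheta,i}|\geq \eta/2>0$ with a fixed sign a.e.\ on each $B_\delta(t_{l,i})$. A continuous, piecewise-affine function whose a.e.\ derivative has constant sign is strictly monotone, so $z_{\vartheta,i}$ has at most one zero in each $B_\delta(t_{l,i})$. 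Recalling the inclusion $\supp(\overline v_{\vartheta,i})\subseteq\{t: z_{\vartheta,i}(t)=0\}$ established before Lemma \ref{LemmaGMatrix}, the support meets each ball in at most one point. Combined with the previous step, $\overline v_{\vartheta,i}$ is supported on at most $m_i$ points, one per ball; as a measure with finite support is a finite sum of Dirac masses, allowing vanishing coefficients yields the representation $\overline v_{\vartheta,i}=\sum_{l=1}^{m_i}c_{l,\vartheta}^i\delta_{t_{l,\vartheta}^i}$ with $t_{l,\vartheta}^i\in B_\delta(t_{l,i})$.

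I expect the delicate step to be this monotonicity argument. The convergence in Lemma \ref{ConvDiffLinfty} controls the derivative only cell-wise, so one must argue carefully that the piecewise-constant discrete slope keeps its sign throughout the whole ball, and not merely on individual cells, so that the continuous interpolant $z_{\vartheta,i}$ is genuinely strictly monotone there and cannot cross zero more than once. The transversality hypothesis (A2) is precisely what provides the quantitative gap $\eta$ needed to absorb the discretization error in this estimate.
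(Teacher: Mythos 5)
Your proof is correct and follows essentially the same route as the paper's: exclusion of support points outside the balls via Remark \ref{RauteRemark} and the uniform convergence of Lemma \ref{Ineq49RHS}, and at most one zero of $z_{\vartheta,i}$ per ball via the sign condition on $\partial_t z_{\vartheta,i}$ from Lemma \ref{ConvDiffLinfty} together with strict monotonicity of the continuous, piecewise affine $z_{\vartheta,i}$. The only difference is that the paper also \emph{constructs} a zero $t_\vartheta$ of $z_{\vartheta,i}$ in each ball by an intermediate value argument based on Lemma \ref{GlmConvZ}; this is not needed for the statement itself (since coefficients may vanish), but it fixes $t_{l,\vartheta}^i$ as a zero of $z_{\vartheta,i}$, a property exploited later in Lemma \ref{First111Lemma}.
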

\begin{proof}
Let us begin with the case $m=1$, $m_1=1$, i.e. $\lbrace t\in I \vert\vert p_1(t)\vert=\alpha\rbrace=\lbrace \tilde{t}_1\rbrace$. First of all we know that $|p_1(t)|\leq \alpha$ for all $t\in \overline I$ holds and since $\overline p_1\in C^1(\overline I)$ as well as that $\tilde t_1$ is an interior point follows $z(\tilde t_1)=-\partial_tp_1(\tilde t_1)=0$. Moreover, due to (A2) there exists a $\delta >0$ and $c_1>0$ such that $|\partial_t z(t)|>c_1$ for all $t\in B_\delta(\tilde t_1)\subset I$. Since $\partial_t z$ is continuous, $\partial_t z$ does not change its sign on $B_\delta(\tilde t)$ and hence $z$ is strictly monotone in $B_\delta(\tilde t_1)$. Therefore $\tilde t_1$ is the only root of $z$ in $B_\delta(\tilde t_1)$. Moreover, there exist $t_-,t_+\in B_\delta(\tilde t_1)$ with $z(t_-)<0<z(t_+)$. By Lemma \ref{GlmConvZ} there exists a $\vartheta_0=(\tau_0,h_0)$ such that $z_\vartheta(t_-)<0<z_\vartheta(t_+)$ for all $\vartheta<\vartheta_0$. Since $z_\vartheta$ is continuous there exists a $t_\vartheta\in (t_-,t_+)$ such that $z_\vartheta(t_\vartheta)=0$ for all $\vartheta<\vartheta_0$. Next we show that there exists a $\widetilde{\vartheta}_0\leq \vartheta_0$ such that $t_{\vartheta}$ is the only root of $z_\vartheta$ in $B_\delta(\tilde t_1)$ for all $\vartheta<\tilde \vartheta_0$. Lemma \ref{ConvDiffLinfty} implies existence of a $\tilde \vartheta_0<\vartheta_0$ that $\partial_t z_\vartheta$ is either strictly positive or strictly negative on $B_\delta(t_1)$. Now let $\hat t_\vartheta$ be a second root of $z_\vartheta$ in $B_\delta(\tilde t_1)$. Then it holds
\[
0=z_\vartheta(t_\vartheta)-z_\vartheta(\hat t_\vartheta)=\int\nolimits_{\hat t_\vartheta}^{t_\vartheta}\partial_tz_\vartheta(t)~ dt\neq 0.
\]
Hence, there is no second root of $z_\vartheta$ in $B_\delta(\tilde t_1)$. Next we show that $t\neq t_\vartheta$ and $z_\vartheta(t)=0$ imply the existence of $\hat \vartheta_0<\tilde \vartheta_0$ such that $|p_{1,\vartheta}(t)|<\alpha$ for all $\vartheta<\hat\vartheta$ and thus $\overline v_\vartheta=c_{1,\vartheta}\delta_{t_\vartheta}$ with $c_1$ possibly zero. Such a $t$ can only exist in $\overline I\setminus B_\delta(\tilde t_1)$. Due to Assumption \ref{assumptionFiniteJumps1} and the condition $|p_{1,\vartheta}(t)|\leq\alpha$ for all $t\in \overline I$ there exists a $\varepsilon>0$ such that $|p_{1}(t)|<\alpha-\varepsilon$ for all $t\in \overline I\setminus B_\delta(\tilde t_1)$. Lemma \ref{Ineq49RHS} implies the existence of a $\hat \vartheta_0<\tilde \vartheta$ with $|p_{1,\vartheta}(t)|<\alpha-\varepsilon/2$ for all $\vartheta < \hat \vartheta_0$ and $t\in \overline I\setminus B_\delta(\tilde t_1)$.
In the case of $m=1$ and $\lbrace t\in I \vert\vert p_1(t)\vert=\alpha\rbrace=\lbrace \tilde{t}_1,\cdots , \tilde{t}_{m_1}\rbrace$ with $m_1>1$, we can find for each $\tilde{t}_i$ a $\delta_i>0$ with $\bigcap_{i=1}^{m_1}B_{\delta_i}(\tilde t_i)=\emptyset$ and a $\vartheta_i$ such that there exists a $t_\vartheta^i\in B_{\delta_i}(\tilde t_i)$ with $\overline v_\vartheta|_{B_{\delta_i}(\tilde t_i)}=c_{i,\vartheta}\delta_{t_\vartheta^i}$ and $\overline v_\vartheta|_{\overline I\setminus \cup_{i=1}^{m_1}B_{\delta_i}(\tilde t_i)}=0$. Then choose $\vartheta_0<\min_{i=1,\ldots,m_1}\vartheta_i$.
In the case of $m>1$, one has to consider the same proof as above with respect to an additional subindex $i=1,\cdots,m$, and the smallest $\vartheta_0$ and $\delta$ used in the proofs of each component $i=1,\cdots,m$.
\end{proof}
From now on, we will assume that $\vartheta\leq \vartheta_0$ holds with $\vartheta_0$ from Lemma \ref{coalaLemma}.
Furthermore, without loss of generality, we assume that $\delta>0$ in Lemma \ref{coalaLemma} is considered to be small enough such that there exists a $\widetilde{\delta}>0$ for which $\widetilde{\delta}< dist(B_\delta(t_{j_1,i}),\lbrace 0,T \rbrace)$, and $\widetilde{\delta}< dist(B_\delta(t_{j_1,i}),B_\delta(t_{j_2,i}))$, $j_1,j_2=1,\cdots,m_i$, $j_1\neq j_2$ for $i=1,\cdots,m$. Let us note that Remark \ref{fsa} and Lemma \ref{coalaLemma} guarantee that such a $\widetilde\delta>0$ exists.
Under these assumptions, we can work with the following definition.
\begin{definition}\label{Ineq51RHS}
Let us define the BV representations of the optimal controls of $(P)$ and $(P^{\text{semi}}_\vartheta)$ in a more explicit form
\[
\overline u_i=c_i+\sum\nolimits_{j=1}^{m_i} c_j^{i}\left(\mathbf{1}_{(t_{j,i},T]}(t)-\frac{T-t_{j,i}}{T}\right),\quad \overline u_{i,\vartheta}=c_{i,\vartheta}+
\sum\nolimits_{j=1}^{m_i} c_{j,\vartheta}^{i}\left(\mathbf{1}_{(t_{j,\vartheta}^i,T]}(t)-\frac{T-t_{j,\vartheta}^i}{T}\right)
\]
for $i=1,\ldots,m$.
\end{definition}
\begin{lemma}\label{AssumptionDecompoL1Norm}
The following inequality holds
\begin{equation}\label{BootstrapIneq2}
\|\overline{u}-\overline{u}_\vartheta\|_{L^1(I)^m}\leq c\left(\sum\nolimits_{i=1}^m \left(|c_i-c_{i,\vartheta}|+ \sum\nolimits_{j=1}^{m_i} |c_j^i|\cdot|t_{j,i}-t_{j,\vartheta}^i|+|c_j^i-c_{j,\vartheta}^i|\right)\right)
\end{equation}
for some constant $c$ which depends only on $T$.
\end{lemma}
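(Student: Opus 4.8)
The plan is to insert the explicit representations from Definition~\ref{Ineq51RHS} into the norm and reduce everything to two elementary one–dimensional estimates. Since $\|\cdot\|_{L^1(I)^m}=\sum_{i=1}^m\|\cdot\|_{L^1(I)}$, it suffices to bound each component $\|\overline u_i-\overline u_{i,\vartheta}\|_{L^1(I)}$ separately and then sum over $i$. Writing $\phi_s(t):=\mathbf{1}_{(s,T]}(t)-\frac{T-s}{T}$ for the (zero-mean) building block of each BV-representation, the difference of the two controls splits as
\[
\overline u_i-\overline u_{i,\vartheta}=(c_i-c_{i,\vartheta})+\sum\nolimits_{j=1}^{m_i}\left(c_j^i\,\phi_{t_{j,i}}-c_{j,\vartheta}^i\,\phi_{t_{j,\vartheta}^i}\right),
\]
so that by the triangle inequality in $L^1(I)$ the constant part contributes $T\,|c_i-c_{i,\vartheta}|$ and it remains to control each summand $\|c_j^i\,\phi_{t_{j,i}}-c_{j,\vartheta}^i\,\phi_{t_{j,\vartheta}^i}\|_{L^1(I)}$. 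The clean splitting into a single sum over $j=1,\dots,m_i$ is legitimate precisely because Lemma~\ref{coalaLemma} guarantees that $\overline u_i$ and $\overline u_{i,\vartheta}$ carry the same number $m_i$ of jumps, located in the pairwise disjoint balls $B_\delta(t_{j,i})$.

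For each $j$ I would add and subtract $c_j^i\,\phi_{t_{j,\vartheta}^i}$, giving
\[
c_j^i\,\phi_{t_{j,i}}-c_{j,\vartheta}^i\,\phi_{t_{j,\vartheta}^i}
=c_j^i\left(\phi_{t_{j,i}}-\phi_{t_{j,\vartheta}^i}\right)+(c_j^i-c_{j,\vartheta}^i)\,\phi_{t_{j,\vartheta}^i},
\]
and then estimate the two resulting $L^1$-norms. The second is immediate: a direct computation gives $\|\phi_{s}\|_{L^1(I)}=\tfrac{2s(T-s)}{T}\le \tfrac{T}{2}$ for every $s\in[0,T]$, so this term is bounded by $c\,|c_j^i-c_{j,\vartheta}^i|$ with $c$ depending only on $T$. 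The first term carries the jump-location error; assuming w.l.o.g.\ $t_{j,i}\le t_{j,\vartheta}^i$ one has $\phi_{t_{j,i}}-\phi_{t_{j,\vartheta}^i}=\mathbf{1}_{(t_{j,i},t_{j,\vartheta}^i]}-\frac{t_{j,\vartheta}^i-t_{j,i}}{T}$, and integrating its absolute value over the interval of length $|t_{j,i}-t_{j,\vartheta}^i|$ and over its complement yields $\|\phi_{t_{j,i}}-\phi_{t_{j,\vartheta}^i}\|_{L^1(I)}=2\,|t_{j,i}-t_{j,\vartheta}^i|\,(1-\tfrac{|t_{j,i}-t_{j,\vartheta}^i|}{T})\le 2\,|t_{j,i}-t_{j,\vartheta}^i|$. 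Multiplying by $|c_j^i|$ produces exactly the term $c\,|c_j^i|\,|t_{j,i}-t_{j,\vartheta}^i|$ appearing in the claim.

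Collecting the three contributions for each $i$ and summing over $i=1,\dots,m$ then gives the asserted inequality with a constant depending only on $T$. The argument is essentially a bookkeeping computation; the only genuinely informative step is the Lipschitz-type estimate $\|\phi_{s}-\phi_{s'}\|_{L^1(I)}\le 2|s-s'|$, which quantifies how displacing a single jump perturbs the control in $L^1$. I do not expect any real obstacle, since the structural work—matching the number of jumps and confining the discrete jumps to the disjoint balls around the continuous ones—has already been established in Lemma~\ref{coalaLemma}; what remains is purely the elementary integration of the two scalar functions $\phi_s$.
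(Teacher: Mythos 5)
Your proof is correct and follows essentially the same route as the paper: insert the explicit BV-representations from Definition~\ref{Ineq51RHS}, split by the triangle inequality with the mixed term $c_j^i\,\phi_{t_{j,\vartheta}^i}$, and reduce to the elementary $L^1$ estimate for a displaced jump, which is exactly the identity $\|\mathbf{1}_{(t_{j,i},T]}-\mathbf{1}_{(t_{j,\vartheta}^i,T]}\|_{L^1(I)}=|t_{j,i}-t_{j,\vartheta}^i|$ that the paper's (one-line) proof invokes. Your version merely spells out the bookkeeping the paper leaves implicit, including the harmless affine correction $\tfrac{T-s}{T}$, and the computations $\|\phi_s\|_{L^1(I)}=\tfrac{2s(T-s)}{T}$ and $\|\phi_s-\phi_{s'}\|_{L^1(I)}\le 2|s-s'|$ are both correct.
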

We can prove (\ref{BootstrapIneq2}) by using $\|\mathbf{1}_{(t_{j,i},T]}-\mathbf{1}_{(t_{j,\vartheta}^i,T]}\|_{L^1(I)}=\vert t_{j,i} - t_{j,\vartheta}^i \vert$.
\begin{lemma}\label{Ineq52RHS}
For each $t_{j,i}$ there is a function $f_{j,i}\in C^\infty_c(\Omega_T)$, with $j=1,\cdots,m_i$ and $i=1,\cdots,m$, such that the function
\begin{equation}\label{PlatooFct}
g_i^j(t,x):=\int\nolimits_{t}^T  L^*(f_{j,i})(s,x)~ds\in C_0(I;L^2(\Omega))
\end{equation}
fulfills the properties
\begin{itemize}
\item[a)] $L^*(f_{j,i})(t,x)=h_{i,j}(t) f_i(x)$ for some $h_{i,j}\in C^\infty_c(I)$, $f_i \in C^\infty_c(\Omega)$, and $f_{j,i}=f_i\partial_{tt}h_{i,j} - h_{i,j}\Delta f_i$,
\item[b)] $0\leq	\int\nolimits_t^T h_{i,j}(s)~ds<1$ in $[t_{j,i}-\delta-\frac{\widetilde{\delta}}{2},t_{j,i}+\delta+\frac{\widetilde{\delta}}{2}] \setminus B_{\delta}(t_{j,i})$,
\item[c)] $\int\nolimits_t^T h_{i,j}(s)~ds=1$ in $B_{\delta}(t_{j,i})$,
\item[d)]
$\supp\left(\int\nolimits_t^T h_{i,j}(s)~ds\right)\subseteq[t_{j,i}-\delta-\frac{\widetilde{\delta}}{2},t_{j,i}+\delta+\frac{\widetilde{\delta}}{2}]\subset\subset I$, i.e. $\int\nolimits_\Omega g_i^j~dx\in C_0(\overline{I})$,
\item[e)]  $\langle f_i,g_l\rangle_{L^2(\Omega)}=\delta_{i,l}$,
\end{itemize}
with $\delta_{i,l}=0$, if $l\neq 0$ and $1$ else.
\end{lemma}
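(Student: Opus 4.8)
The plan is to construct the functions in reverse: rather than prescribing the source $f_{j,i}$ and then solving the backward wave equation, I would prescribe the desired adjoint state directly in the separated form $p(t,x)=h_{i,j}(t)f_i(x)$ and then \emph{read off} the source from the wave operator. Indeed, if $p=h_{i,j}f_i$ with $f_i$ vanishing near $\partial\Omega$ and $h_{i,j}$ vanishing near $t=T$, then $\partial_{tt}p-\Delta p = f_i\,\partial_{tt}h_{i,j}-h_{i,j}\Delta f_i$, which is exactly the formula demanded in (a). So the whole construction decouples into a spatial factor $f_i$, designed to meet the biorthogonality (e), and a temporal factor $h_{i,j}$, designed to meet the profile conditions (b)--(d); the two may be chosen independently because the ansatz is a product.

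For the spatial factor I would exploit the linear independence of $\{g_l\}_{l=1}^m$ (they are nonzero with pairwise disjoint supports). The linear map $\Phi\colon L^2(\Omega)\to\mathbb R^m$, $f\mapsto((f,g_l)_{L^2(\Omega)})_{l=1}^m$, is then surjective. Since $C_c^\infty(\Omega)$ is dense in $L^2(\Omega)$ and $\Phi$ is continuous, the image $\Phi(C_c^\infty(\Omega))$ is a dense linear subspace of the finite-dimensional space $\mathbb R^m$, hence all of $\mathbb R^m$. Choosing a preimage of the $i$-th unit vector yields $f_i\in C_c^\infty(\Omega)$ with $(f_i,g_l)_{L^2(\Omega)}=\delta_{i,l}$, which is (e); alternatively one may simply build $f_i$ supported inside $\supp(g_i)$, automatically orthogonal to the other $g_l$. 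Membership in $C_c^\infty(\Omega)$ guarantees $f_i=0$ near $\partial\Omega$, as will be needed for the boundary condition of $L^*$.

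For the temporal factor I would first build the antiderivative profile $H(t):=\int\nolimits_t^T h_{i,j}(s)\,ds$ and then set $h_{i,j}:=-H'$. I take $H\in C_c^\infty(I)$ to be a smooth plateau function with $0\le H\le 1$, equal to $1$ on $B_\delta(t_{j,i})$, strictly below $1$ on the two transition strips of width $\widetilde\delta/2$, and supported in $[t_{j,i}-\delta-\widetilde\delta/2,\,t_{j,i}+\delta+\widetilde\delta/2]\subset\subset I$; such a function exists by the standard mollified-trapezoid construction. Since $H(T)=0$, one has $\int\nolimits_t^T h_{i,j}(s)\,ds=\int\nolimits_t^T -H'(s)\,ds=H(t)$, so properties (b), (c) and (d) follow directly from the shape of $H$, while $h_{i,j}=-H'\in C_c^\infty(I)$ has zero mean because $H(0)=H(T)=0$.

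It then remains to verify that the separated ansatz is genuinely the adjoint state, which I expect to be the only delicate point. Setting $f_{j,i}:=f_i\,\partial_{tt}h_{i,j}-h_{i,j}\Delta f_i\in C_c^\infty(\Omega_T)$ and $p:=h_{i,j}f_i$, the identity $\partial_{tt}p-\Delta p=f_{j,i}$ holds by construction; the homogeneous Dirichlet condition follows from $f_i\in C_c^\infty(\Omega)$, and the terminal conditions $p(T)=\partial_t p(T)=0$ follow because $h_{i,j}$ vanishes in a neighborhood of $t=T$ (its support lies strictly inside $I$). By uniqueness of the weak solution of the backward equation (Lemma \ref{adjointLemma}) this forces $L^*(f_{j,i})=h_{i,j}f_i$, giving (a). Consequently $g_i^j(t,x)=f_i(x)\int\nolimits_t^T h_{i,j}(s)\,ds=H(t)f_i(x)$ lies in $C_0(I;L^2(\Omega))$, and $\int\nolimits_\Omega g_i^j\,dx=H(t)\int\nolimits_\Omega f_i\,dx\in C_0(\overline I)$ since $H$ is continuous and compactly supported in $I$, completing (d). The main conceptual step, and the one on which everything rests, is the reversal of viewpoint in the first paragraph together with the uniqueness argument that legitimizes reading the source off from a prescribed solution.
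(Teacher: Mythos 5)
Your proposal is correct and follows essentially the same route as the paper: the paper also builds a smooth plateau function $\widetilde p_{j,i}$ (your $H$) equal to $1$ on $B_\delta(t_{j,i})$ and supported in $I_{j,i}$, sets $h_{i,j}=-\partial_t\widetilde p_{j,i}$, picks $f_i\in C_c^\infty(\Omega)$ biorthogonal to the $g_l$, and asserts $L^*(f_{j,i})=h_{i,j}f_i$ with $f_{j,i}=f_i\partial_{tt}h_{i,j}-h_{i,j}\Delta f_i$. Your only additions are explicit justifications the paper leaves implicit (the surjectivity/disjoint-support argument for $f_i$ and the uniqueness argument legitimizing the separated ansatz), which fill in the paper's ``one can show'' step.
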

\begin{proof}
For all $I_{j,i}:=[t_{j,i}-\delta-\frac{\widetilde{\delta}}{2},t_{j,i}+\delta+\frac{\widetilde{\delta}}{2}]$, $j=1,\cdots,m_i$ and $i=1,\cdots,m$,  there exists $\widetilde{p}_{j,i}\in C^\infty_c(I)$ such that $0\leq\widetilde{p}_{j,i}\leq 1$ with
\begin{equation}\label{Could_be_generalized}
\widetilde{p}_{j,i}=0 \text{ in } I\setminus I_{j,i}, \quad \widetilde{p}_{j,i}=1 \text{ in } [t_{j,i}-\delta,t_{j,i}+\delta].
\end{equation}
Let us define $h_{i,j}=-\partial_t\widetilde{p}_{j,i}$. For each $g_i$, $i=1,\cdots,m$, we can find a $f_i\in C^\infty_c(\Omega)$ such that $(f_i , g_k )_{L^2(\Omega)}$ is $0$ for $i\neq k$ and $1$ else. One can show that $L^*(f_{j,i})=h_{i,j} f_i$ with $f_{j,i}:=f_i\partial_{tt}h_{i,j} - h_{i,j}\Delta f_i$ holds.
Hence, $g_i^j$, defined in (\ref{PlatooFct}), fulfills the desired properties a)-e).
\end{proof}
\begin{lemma}\label{LemmajumpHeight}
There exists a constant $c>0$ independent of $\vartheta$ such that
\begin{equation}\label{opti1}
\vert c_j^i-c_{j,\vartheta}^i \vert\leq c\left(\tau^2+h^2+\|\overline y-\overline y_{\vartheta}\|_{L^2(\Omega_T)}\right) \text{ with } j=1,\cdots,m_i \text{ and }i=1,\cdots,m.
\end{equation}
\end{lemma}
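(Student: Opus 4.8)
The plan is to extract each individual jump height by testing the difference of the derivative measures $\overline v_i-\overline v_{\vartheta,i}$ against the antiderivative of the bump $h_{i,j}$ supplied by Lemma~\ref{Ineq52RHS}, and then to convert the resulting pairing into an $L^2(\Omega_T)$ inner product through the predual relation between $B$ and $B^*$. Concretely, set $\phi_{j,i}(t):=\int_t^T h_{i,j}(s)\,\mathrm{d}s$. By Lemma~\ref{Ineq52RHS}(b)--(d) this function equals $1$ on $B_\delta(t_{j,i})$, is supported in $I_{j,i}=[t_{j,i}-\delta-\tfrac{\widetilde\delta}{2},t_{j,i}+\delta+\tfrac{\widetilde\delta}{2}]$, and hence vanishes on every $B_\delta(t_{l,i})$ with $l\neq j$ thanks to the $\widetilde\delta$-separation of these balls. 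Writing $\overline v_i=\sum_{j'} c_{j'}^i\delta_{t_{j',i}}$ and, via Lemma~\ref{coalaLemma}, $\overline v_{\vartheta,i}=\sum_l c_{l,\vartheta}^i\delta_{t_{l,\vartheta}^i}$ with $t_{l,\vartheta}^i\in B_\delta(t_{l,i})$, testing against $\phi_{j,i}\in C_0(I)$ isolates the $j$-th coefficient in each sum, since $t_{j,\vartheta}^i\in B_\delta(t_{j,i})$ where $\phi_{j,i}=1$. This yields $\int_0^T\phi_{j,i}\,\mathrm{d}\overline v_i=c_j^i$ and $\int_0^T\phi_{j,i}\,\mathrm{d}\overline v_{\vartheta,i}=c_{j,\vartheta}^i$, so that
\[
c_j^i-c_{j,\vartheta}^i=\int_0^T\phi_{j,i}\,\mathrm{d}(\overline v_i-\overline v_{\vartheta,i}).
\]

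Next I would recognize $\phi_{j,i}$ as the $i$-th slot of $B^*(L^*(f_{j,i}))$. Taking $q=L^*(f_{j,i})=h_{i,j}f_i$ in Proposition~\ref{Propdiff} and using $\langle f_i,g_l\rangle_{L^2(\Omega)}=\delta_{i,l}$ together with $\phi_{j,i}(0)=\int_0^T h_{i,j}\,\mathrm{d}s=0$ (which kills the mean-value correction term), one finds that all components of $B^*(L^*(f_{j,i}))$ vanish except the $i$-th, which equals $\phi_{j,i}$. The predual identity then gives, for arbitrary $(v,c)$,
\[
\int_0^T\phi_{j,i}\,\mathrm{d}v_i=\langle (v,c),B^*(L^*(f_{j,i}))\rangle=\int_{\Omega_T}B(v,c)\,L^*(f_{j,i})\,\mathrm{d}x\,\mathrm{d}t=\langle L(B(v,c)),f_{j,i}\rangle_{L^2(\Omega_T)}.
\]
Applying this to $(\overline v,\overline c)$ and to $(\overline v_\vartheta,\overline c_\vartheta)$ and subtracting leads to $c_j^i-c_{j,\vartheta}^i=\langle L(B(\overline v,\overline c))-L(B(\overline v_\vartheta,\overline c_\vartheta)),f_{j,i}\rangle_{L^2(\Omega_T)}$.

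The decisive step is then to trade the continuous operator $L$ applied to the discrete control for the discrete state plus discretization remainders. Using $L(B(\overline v,\overline c))=\overline y-Q(y_0,y_1)$ and $L_\vartheta(B(\overline v_\vartheta,\overline c_\vartheta))=\overline y_\vartheta-Q_\vartheta(y_0,y_1)$, I would add and subtract $L_\vartheta(B(\overline v_\vartheta,\overline c_\vartheta))$ to obtain
\[
c_j^i-c_{j,\vartheta}^i=\langle\overline y-\overline y_\vartheta,f_{j,i}\rangle+\langle Q_\vartheta(y_0,y_1)-Q(y_0,y_1),f_{j,i}\rangle+\langle (L-L_\vartheta)(B(\overline v_\vartheta,\overline c_\vartheta)),f_{j,i}\rangle.
\]
Cauchy--Schwarz in $L^2(\Omega_T)$ and $\|f_{j,i}\|_{L^2(\Omega_T)}\leq c$ then bound the first term by $c\|\overline y-\overline y_\vartheta\|_{L^2(\Omega_T)}$; the second is $O(h^2+\tau^2)$ by the a priori estimate \eqref{Cor221} and $(y_0,y_1)\in\mathbb H^3\times\mathbb H^2$; and the third is $O(h^2+\tau^2)$ by \eqref{Cor221}, Corollary~\ref{cor:Breg}, and the $\vartheta$-uniform bound of $B(\overline v_\vartheta,\overline c_\vartheta)$ in $L^\infty(I;\mathbb H^2)$ coming from Lemma~\ref{boundedDiscreteoptiInBVandLInfty}. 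Combining the three bounds gives \eqref{opti1}.

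I expect the only genuine obstacle to be the bookkeeping that makes the first paragraph rigorous: confirming that $\phi_{j,i}$ truly selects exactly the $j$-th Dirac in both $\overline v_i$ and $\overline v_{\vartheta,i}$, which rests entirely on the geometric separation $\mathrm{dist}(B_\delta(t_{l,i}),I_{j,i})>0$ for $l\neq j$ and on the localization $t_{j,\vartheta}^i\in B_\delta(t_{j,i})$ guaranteed by Lemma~\ref{coalaLemma}. The analytic content is carried purely by the second-order rate \eqref{Cor221}, and the crucial algebraic device---a test function whose adjoint $L^*(f_{j,i})$ factorizes as $h_{i,j}(t)f_i(x)$ with $f_i$ biorthogonal to the $g_l$---is precisely what Lemma~\ref{Ineq52RHS} was designed to provide, so no new construction is needed here.
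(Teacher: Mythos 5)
Your proposal is correct, and its core mechanism is the paper's own: you extract $c_j^i-c_{j,\vartheta}^i$ by pairing $D_t(\overline u_i-\overline u_{\vartheta,i})$ with $\phi_{j,i}(t)=\int_t^T h_{i,j}(s)\,ds$ (using the support/separation properties of Lemma \ref{Ineq52RHS} exactly as in \eqref{Stepheight}), and convert this into the $L^2(\Omega_T)$ pairing $\langle L(B(\overline v,\overline c))-L(B(\overline v_\vartheta,\overline c_\vartheta)),f_{j,i}\rangle_{L^2(\Omega_T)}$; your detour through $B^*$ and Proposition \ref{Propdiff} is a repackaging of the paper's direct integration by parts plus the biorthogonality $\langle f_i,g_l\rangle_{L^2(\Omega)}=\delta_{i,l}$. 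Where you genuinely diverge is the error splitting. The paper adds and subtracts the \emph{discrete adjoint} $L_\vartheta^*(f_{j,i})$: one term is the adjoint FEM error on the fixed smooth datum $f_{j,i}$ times $\|B(\overline v,\overline c)-B(\overline v_\vartheta,\overline c_\vartheta)\|_{L^2(\Omega_T)}$, and the other becomes $(f_{j,i},S_\vartheta(\overline v,\overline c)-S_\vartheta(\overline v_\vartheta,\overline c_\vartheta))_{L^2(\Omega_T)}$, which is then split via the FEM error on the \emph{continuous} optimal control and the state error. You instead add and subtract $L_\vartheta(B(\overline v_\vartheta,\overline c_\vartheta))$, producing the state error directly plus two FEM errors: $(Q-Q_\vartheta)(y_0,y_1)$ and $(L-L_\vartheta)$ applied to the \emph{discrete} control. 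Your version never touches $L_\vartheta^*$ (so Remark \ref{AdjDiscOp} is not needed), but it shifts the burden onto a $\vartheta$-uniform bound of $\|B(\overline v_\vartheta,\overline c_\vartheta)\|_{L^\infty(I;\mathbb H^2)}$, which you correctly secure from Lemma \ref{boundedDiscreteoptiInBVandLInfty} together with $g\in(\mathbb H^2)^m$; the paper, in turn, needs the same BV bound anyway to control the $L^2$ norm of the control difference. Both decompositions rest on the same three ingredients --- the rate \eqref{Cor221}, the uniform BV bound, and the state error --- and yield the same conclusion, so the proofs are of equal strength. One minor slip: for the displayed identity to hold, the last term of your decomposition should be $\langle (L_\vartheta-L)(B(\overline v_\vartheta,\overline c_\vartheta)),f_{j,i}\rangle$ rather than $\langle (L-L_\vartheta)(B(\overline v_\vartheta,\overline c_\vartheta)),f_{j,i}\rangle$; since you only use its absolute value, the estimate is unaffected.
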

\begin{proof}
Let $i=1,\cdots,m$, $j\in \lbrace 1,\cdots,m_i \rbrace$ and consider from Lemma \ref{Ineq52RHS} the function $g^j_i=\int\nolimits^T_t h_{i,j}f_i\,ds=\int\nolimits_t^T L^\ast(f_{j,i})\,ds$. Hence, we have
\begin{multline}\label{Stepheight}
c_j^i-c_{j,\vartheta}^i=\int\nolimits_0^T \int\nolimits_t^T  h_{i,j}(s)~ds~dD_t(\overline{u}_i-\overline{u}_{\vartheta,i})(t)=\int\nolimits_0^T h_{i,j} (\overline{u}_i-\overline{u}_{\vartheta,i})~dt\\
=\sum\nolimits_{l=1}^m \int\nolimits_0^T h_{i,j} (\overline{u}_l-\overline{u}_{\vartheta,l})\,dt \int\nolimits_\Omega f_i g_l\,dx=\int\nolimits_0^T\int\nolimits_\Omega
-\partial_t g^j_i \left(\sum\nolimits_{l=1}^m (\overline{u}_l-\overline{u}_{\vartheta,l})g_l\right)~dt~dx
\end{multline}
Thus, it follows
\begin{multline*}
c_j^i-c_{j,\vartheta}^i= \int\nolimits_0^T\int\nolimits_\Omega-\partial_t g^j_i \left(\sum\nolimits_{l=1}^m (\overline{u}_l-\overline{u}_{\vartheta,l}) g_l\right)\,dt\,dx= \int\nolimits_0^T \int\nolimits_{\Omega} L^*(f_{j,i})(B(\overline v,\overline c)-B(\overline v_\vartheta,\overline c_\vartheta))~dt~dx\\
=(L^*(f_{j,i})-L_\vartheta^*(f_{j,i}),B(\overline v,\overline c)-B(\overline v_\vartheta,\overline c_\vartheta))_{L^2(\Omega_T)}+(L_\vartheta^*(f_{j,i}),B(\overline v,\overline c)-B(\overline v_\vartheta,\overline c_\vartheta))_{L^2(\Omega_T)}.
\end{multline*}
By Theorem \ref{AprioriThm} and the boundedness of $(\overline v_\vartheta,\overline c_\vartheta)$ we obtain
\begin{multline*}
(L^*(f_{j,i})-L_\vartheta^*(f_{j,i}),B(\overline v,\overline c)-B(\overline v_\vartheta,\overline c_\vartheta))_{L^2(\Omega_T)}\\
\leq \|L^*(f_{j,i})-L_\vartheta^*(f_{j,i})\|_{L^2(\Omega_T)}\|B(\overline v,\overline c)-B(\overline v_\vartheta,\overline c_\vartheta)\|_{L^2(\Omega_T)}=O(\tau^2+h^2).
\end{multline*}
Moreover, there holds
\begin{multline*}
(L_\vartheta^*(f_{j,i}),B(\overline v,\overline c)-B(\overline v_\vartheta,\overline c_\vartheta))_{L^2(\Omega_T)}
=\left(f_{j,i},L_\vartheta(B(\overline v,\overline c)) - L_\vartheta(B(\overline v_\vartheta,\overline c_\vartheta))\right)_{L^2(\Omega_T)}\\
\leq \|f_{j,i}\|_{L^2(\Omega_T)}\|S_\vartheta(\overline v,\overline c) -S_\vartheta(\overline v_\vartheta,\overline c_\vartheta)\|_{L^2(\Omega_T)}\\
\leq c\|S_\vartheta(\overline v,\overline c)-S(\overline v,\overline c)\|_{L^2(\Omega_T)} + c\|S(\overline v,\overline c)-S_\vartheta(\overline v_\vartheta,\overline c_\vartheta)\|_{L^2(\Omega_T)}\\
\leq c(\tau^2+h^2+\|\overline{y}-\overline{y}_\vartheta \|_{L^2(\Omega_T)})
\end{multline*}
according to Corollary \ref{cor:Breg} and \ref{Cor221}.
\end{proof}
\begin{lemma}\label{First111Lemma}
There holds that
\begin{equation}\label{First111}
\vert t_{j,1}^i-t_{j,\vartheta}^i \vert\leq c\left(\tau^2+h^2+\|\overline y-\overline y_{\vartheta}\|_{L^2(\Omega_T)}\right) \text{ with }j=1,\cdots,m_i \text{ and } i=1,\cdots,m .
\end{equation}
\end{lemma}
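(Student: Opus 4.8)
The plan is to locate the continuous jump point $t_{j,i}$ as a zero of $z_i$ and the discrete jump point $t_{j,\vartheta}^i$ as a zero of $z_{\vartheta,i}$, and then to convert the distance between these two zeros into a pointwise difference of $z_i$ and $z_{\vartheta,i}$ by exploiting the non-degeneracy of $z_i$ provided by (A2). First I would record the two defining identities $z_i(t_{j,i})=0$ and $z_{\vartheta,i}(t_{j,\vartheta}^i)=0$. These follow from the support characterisations $\supp(\overline v_i)\subseteq\{z_i=0\}$ and $\supp(\overline v_{\vartheta,i})\subseteq\{z_{\vartheta,i}=0\}$ established above via Proposition \ref{support_measure_cor} and Remark \ref{RauteRemark}, since the jump points lie in the respective supports.

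Next I would use (A2) together with the continuity of $\partial_t z_i$ (recall $z\in C^1(\overline I)^m$ from the proof of Lemma \ref{GlmConvZ}) to fix $c_1>0$ and, shrinking $\delta$ if necessary, to ensure $|\partial_t z_i|\ge c_1$ on $B_\delta(t_{j,i})$, exactly as in the proof of Lemma \ref{coalaLemma}. Since that lemma guarantees $t_{j,\vartheta}^i\in B_\delta(t_{j,i})$ for $\vartheta\le\vartheta_0$, the mean value theorem on the segment joining $t_{j,i}$ and $t_{j,\vartheta}^i$ yields
\[
|z_i(t_{j,\vartheta}^i)| = |z_i(t_{j,\vartheta}^i)-z_i(t_{j,i})| \ge c_1\,|t_{j,i}-t_{j,\vartheta}^i|,
\]
while the vanishing of $z_{\vartheta,i}$ at $t_{j,\vartheta}^i$ gives
\[
|z_i(t_{j,\vartheta}^i)| = |z_i(t_{j,\vartheta}^i)-z_{\vartheta,i}(t_{j,\vartheta}^i)| \le \|z-z_\vartheta\|_{L^\infty(I)^m}.
\]
Combining these two lines reduces the assertion to the sharp bound $\|z-z_\vartheta\|_{L^\infty(I)^m}\le c(\tau^2+h^2+\|\overline y-\overline y_\vartheta\|_{L^2(\Omega_T)})$.

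For this bound I would write $z_i(t)-z_{\vartheta,i}(t)=\int_\Omega(\overline p-\overline p_\vartheta)(t)g_i\,dx$ with $\overline p=L^*(\overline y-y_d)$ and $\overline p_\vartheta=L_\vartheta^*(\overline y_\vartheta-y_d)$, and split
\[
\overline p-\overline p_\vartheta = (L^*-L_\vartheta^*)(\overline y-y_d) + L_\vartheta^*(\overline y-\overline y_\vartheta).
\]
The first term is $O(\tau^2+h^2)$ in $C(\overline I;L^2(\Omega))$ by Theorem \ref{AprioriThm} in its adjoint form (Remark \ref{AdjDiscOp}), since $\overline y-y_d\in C^1(\overline I;H^1_0(\Omega))\hookrightarrow W^{1,1}(I;H^1_0(\Omega))$ is a fixed datum independent of $\vartheta$, so that \eqref{AprioriEsti} applies with zero initial data. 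The second term is controlled by the stability of $L_\vartheta^*$ (Lemma \ref{LtauhAbsch} via Remark \ref{AdjDiscOp}) through $\|L_\vartheta^*(\overline y-\overline y_\vartheta)\|_{C(\overline I;L^2)}\le c\|\overline y-\overline y_\vartheta\|_{L^1(I;L^2)}\le c\|\overline y-\overline y_\vartheta\|_{L^2(\Omega_T)}$. Testing against $g_i\in L^2(\Omega)$ and summing over $i$ produces the claimed bound, and hence $|t_{j,i}-t_{j,\vartheta}^i|\le c(\tau^2+h^2+\|\overline y-\overline y_\vartheta\|_{L^2(\Omega_T)})$.

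The main obstacle is this last step: obtaining the optimal order $\tau^2+h^2$ for $\|z-z_\vartheta\|_{L^\infty}$ rather than the crude rate used in Lemma \ref{GlmConvZ}. The decisive point is that the splitting isolates the genuinely second-order adjoint-consistency error $(L^*-L_\vartheta^*)(\overline y-y_d)$ — which requires the $C^1(\overline I;H^1_0)$ regularity of the fixed right-hand side $\overline y-y_d$ in order to invoke \eqref{AprioriEsti} at full order — from the state error $\|\overline y-\overline y_\vartheta\|_{L^2(\Omega_T)}$, whose smallness is only resolved later by the bootstrapping argument. A secondary technical point is to choose $c_1$ and $\delta$ uniformly in $\vartheta$ so that the mean-value lower bound does not degenerate; this is already guaranteed by the construction carried out in Lemma \ref{coalaLemma}.
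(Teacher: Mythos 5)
Your proposal is correct and follows essentially the same route as the paper's proof: both locate $t_{j,i}$ and $t_{j,\vartheta}^i$ as zeros of $z_i$ resp. $z_{\vartheta,i}$, use the mean value theorem with the uniform lower bound on $\vert\partial_t z_i\vert$ from the proof of Lemma \ref{coalaLemma} to reduce the claim to $\|z-z_\vartheta\|_{L^\infty(I)^m}$, and then split the adjoint difference into the consistency error $(L^*-L_\vartheta^*)(\overline y-y_d)$, estimated at order $\tau^2+h^2$ via \eqref{AprioriEsti}, plus the stability term $L_\vartheta^*(\overline y-\overline y_\vartheta)$ bounded by $\|\overline y-\overline y_\vartheta\|_{L^2(\Omega_T)}$ via Lemma \ref{LtauhAbsch}. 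Your write-up is, if anything, slightly more explicit than the paper's (e.g.\ in invoking Remark \ref{AdjDiscOp} and the embedding $C^1(\overline I;H^1_0(\Omega))\hookrightarrow W^{1,1}(I;H^1_0(\Omega))$), but the decomposition and the key lemmas are identical.
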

\begin{proof}
Using that $z_i(t_{j,i})=z_{\vartheta,i}(t_{j,\vartheta}^i)=0$ and $z_i=\partial_t p_{1,i}\in C^1(\overline{I})$ gives us
\(
z_i(t_{j,\vartheta}^i)-z_{\vartheta,i}(t_{j,\vartheta}^i)=z_i(t_{j,\vartheta}^i)=z_i(t_{j,i})+\partial_t z_i(\xi)(t_{j,i}-t_{j,\vartheta}^i)
\)
for some $\xi\in B_\delta(t_{j,i})$. In the proof of Lemma \ref{coalaLemma} we have shown that $\vert\partial_t z_i(\tilde{\xi})\vert>0$ for all $\tilde{\xi} \in B_\delta(t_{j,i})$ and therefore we have $\partial_t z_i(\xi)\neq 0$.
Then Lemma \ref{bddlinOpLtauh} and Theorem \ref{AprioriThm} imply
\begin{multline}\label{sdskey}
\vert t_{j,i}-t_{j,\vartheta}^i \vert \leq c \|z_i-z_{\vartheta,i}\|_{L^\infty(I)}\leq c \|z-z_{\vartheta}\|_{L^\infty(I)^m}\\
\leq c \|L^*(S(\overline v,\overline c)-y_d)-L_\vartheta^*(S_\vartheta(\overline v_\vartheta,\overline c_\vartheta)-y_d)\|_{C(\overline{I};L^2(\Omega))}\\
\leq c\|L^*(S(\overline v,\overline c)-y_d)-L_\vartheta^*(S(\overline v,\overline c)-y_d)\|_{C(\overline{I};L^2(\Omega))}+c\|L_\vartheta^*(S(\overline v,\overline c)-S_\vartheta(\overline v_\vartheta,\overline c_\vartheta))\|_{C(\overline{I};L^2(\Omega))}\\
\leq c\|L^*(S(\overline v,\overline c)-y_d)-L_\vartheta^*(S(\overline v,\overline c)-y_d)\|_{C(\overline{I};L^2(\Omega))}+c\|S(\overline v,\overline c)-S_\vartheta(\overline v_\vartheta,\overline c_\vartheta)\|_{L^1(I;L^2(\Omega))}\\
\leq c(\tau^2+h^2+\|S(\overline v,\overline c)-S_\vartheta(\overline v_\vartheta,\overline c_\vartheta)\|_{L^2(\Omega_T)}).
\end{multline}
\end{proof}
\begin{lemma}\label{constantsConvRate}
Let $\vartheta_0$ be small enough such that
\[
\left(\left(L_{\vartheta_0}(g_i),L_{\vartheta_0}(g_j)\right)_{L^2(\Omega_T)}\right)_{i,j=\lbrace 1,\cdots, m \rbrace}>0 \text{ holds}.
\]
Then we obtain
\begin{equation}\label{First112}
\vert c_{i}-c_{i,\vartheta} \vert \leq c(\tau^2 +h^2 + \|\overline{y}-\overline{y}_{\vartheta}\|_{L^2(\Omega_T)}).
\end{equation}
\end{lemma}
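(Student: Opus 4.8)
The plan is to recover the mean-value constants by projecting the state error onto the finite-dimensional space spanned by the responses to the constant-in-time sources $g_k$. One cannot simply use $c_i-c_{i,\vartheta}=\frac1T\int_0^T(\overline u_i-\overline u_{\vartheta,i})\,dt$, since controlling $\|\overline u-\overline u_\vartheta\|_{L^1(I)^m}$ is precisely the goal of Lemma \ref{AssumptionDecompoL1Norm} and already involves $|c_i-c_{i,\vartheta}|$; this would be circular. Instead I would split the BV representations from Definition \ref{Ineq51RHS} as $\overline u_i=\hat u_i+c_i$ and $\overline u_{\vartheta,i}=\hat u_{\vartheta,i}+c_{i,\vartheta}$, where $\hat u_i,\hat u_{\vartheta,i}$ are the zero-mean jump parts. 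Using the linearity of $B$, $L$, $L_\vartheta$, $Q$, $Q_\vartheta$ together with \eqref{repBV}, the optimal states decompose as
\[
\overline y=L\Big(\sum\nolimits_j\hat u_j g_j\Big)+\sum\nolimits_j c_j\,Lg_j+Q(y_0,y_1),
\]
and analogously for $\overline y_\vartheta$ with $L_\vartheta$, $\hat u_{\vartheta,j}$, $c_{j,\vartheta}$, $Q_\vartheta$.

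Subtracting and rearranging, I would isolate the constant-source contribution,
\[
\sum\nolimits_j(c_j-c_{j,\vartheta})\,L_\vartheta g_j=(\overline y-\overline y_\vartheta)-R,
\]
where $R$ collects the finite-element error $L(\sum_j\hat u_j g_j)-L_\vartheta(\sum_j\hat u_j g_j)$ for the fixed source $\sum_j\hat u_j g_j\in L^\infty(I;\mathbb H^2)$ (Corollary \ref{cor:Breg}), the stability term $L_\vartheta(\sum_j(\hat u_j-\hat u_{\vartheta,j})g_j)$, the errors $\sum_j c_j(Lg_j-L_\vartheta g_j)$ for the constant sources $g_j\in\mathbb H^2$, and the initial-data error $Q(y_0,y_1)-Q_\vartheta(y_0,y_1)$. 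The first, third, and fourth are $O(\tau^2+h^2)$ by Theorem \ref{AprioriThm} (estimate \eqref{Cor221}) and the boundedness of the fixed $c_j$. The stability term is controlled by Lemma \ref{LtauhAbsch} through $c\|\hat u-\hat u_\vartheta\|_{L^1(I)^m}$; crucially, $\hat u-\hat u_\vartheta$ is the purely jump part, so by the computation underlying Lemma \ref{AssumptionDecompoL1Norm} (dropping the mean-value term) it is bounded by $c\sum_{i,j}(|c_j^i|\,|t_{j,i}-t_{j,\vartheta}^i|+|c_j^i-c_{j,\vartheta}^i|)$, hence by $c(\tau^2+h^2+\|\overline y-\overline y_\vartheta\|_{L^2(\Omega_T)})$ via Lemmas \ref{LemmajumpHeight} and \ref{First111Lemma}. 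Thus $\|R\|_{L^2(\Omega_T)}\le c(\tau^2+h^2+\|\overline y-\overline y_\vartheta\|_{L^2(\Omega_T)})$.

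To extract the individual constants I would test the isolated identity against $L_\vartheta g_k$, $k=1,\dots,m$, obtaining the linear system $G_\vartheta\,e=r$ with $e=(c_j-c_{j,\vartheta})_j$, $G_\vartheta=((L_\vartheta g_i,L_\vartheta g_j)_{L^2(\Omega_T)})_{i,j}$, and $r_k=(\overline y-\overline y_\vartheta-R,L_\vartheta g_k)_{L^2(\Omega_T)}$. Since $\|L_\vartheta g_k\|_{L^2(\Omega_T)}$ is bounded uniformly in $\vartheta$ by Lemma \ref{LtauhAbsch}, one gets $|r|\le c(\|\overline y-\overline y_\vartheta\|_{L^2(\Omega_T)}+\|R\|_{L^2(\Omega_T)})\le c(\tau^2+h^2+\|\overline y-\overline y_\vartheta\|_{L^2(\Omega_T)})$. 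Inverting $G_\vartheta$ then yields $|c_i-c_{i,\vartheta}|\le|e|\le\|G_\vartheta^{-1}\|\,|r|$, which is exactly \eqref{First112}.

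The main obstacle is the uniform invertibility of $G_\vartheta$. The hypothesis only asserts positive definiteness of a single matrix $G_{\vartheta_0}$, whereas the bound needs $\|G_\vartheta^{-1}\|$ bounded uniformly for all $\vartheta\le\vartheta_0$. I would establish this from the convergence $L_\vartheta g_i\to Lg_i$ in $L^2(\Omega_T)$ (Theorem \ref{AprioriThm}), which gives $G_\vartheta\to G$, together with the positive definiteness of the continuous Gramian $G$ from Lemma \ref{LemmaGMatrix}; choosing $\vartheta_0$ small enough then keeps the smallest eigenvalue of $G_\vartheta$ bounded away from zero, uniformly in $\vartheta\le\vartheta_0$. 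The second delicate point, already addressed above, is to keep the right-hand side free of $\|\overline u-\overline u_\vartheta\|_{L^1(I)^m}$, which is achieved by working exclusively with the zero-mean jump part $\hat u-\hat u_\vartheta$, whose $L^1$-norm is governed only by the previously estimated jump heights and locations.
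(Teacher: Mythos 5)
Your proof is correct, but it takes a genuinely different route from the paper. The paper's proof starts from the first-order optimality conditions: it exploits $\overline p_{1,i}(0)=0$ and $\overline p_{1,\vartheta,i}(0)=0$, expands both identities using the explicit representations of Definition \ref{Ineq51RHS}, and subtracts. This produces a linear system for $\overline c-\overline c_\vartheta$ whose matrix is the \emph{continuous} Gramian $G=\left(\left(Lg_i,Lg_j\right)_{L^2(\Omega_T)}\right)_{i,j}$, which is positive definite unconditionally by Lemma \ref{LemmaGMatrix} --- in fact the hypothesis on $L_{\vartheta_0}$ in the lemma statement is never invoked in the paper's argument. The price is that the perturbation terms involve compositions $L^*L$ versus $L_\vartheta^*L_\vartheta$ applied to the various sources, so the paper must estimate adjoint-composed finite element errors term by term (using \eqref{AprioriEsti}, \eqref{Cor221}, Lemma \ref{LtauhAbsch}, and Lemmas \ref{LemmajumpHeight}, \ref{First111Lemma}). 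You instead bypass the adjoint entirely: you decompose the state error by linearity, isolate $\sum_j(c_j-c_{j,\vartheta})L_\vartheta g_j$, and test against $L_\vartheta g_k$, which yields a system with the \emph{discrete} Gramian $G_\vartheta$. This makes the error bookkeeping lighter (only single applications of $L$, $L_\vartheta$, $Q$, $Q_\vartheta$ appear), and it gives the lemma's stated hypothesis an actual role; the cost is that you need $\|G_\vartheta^{-1}\|$ bounded uniformly for $\vartheta\leq\vartheta_0$, which you correctly identify as the delicate point and settle via $G_\vartheta\to G$ (Theorem \ref{AprioriThm} applied to $g_i\in\mathbb H^2$) together with Lemma \ref{LemmaGMatrix}, shrinking $\vartheta_0$ so the smallest eigenvalue stays bounded away from zero. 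Your treatment of the jump part $\hat u-\hat u_\vartheta$ --- bounding its $L^1$-norm by the jump-height and jump-location errors of Lemmas \ref{LemmajumpHeight} and \ref{First111Lemma}, thereby keeping $|c_i-c_{i,\vartheta}|$ out of the right-hand side --- mirrors exactly the role these lemmas play in the paper's proof, so both arguments rest on the same foundation; they differ only in whether the constants are extracted through the optimality condition and the continuous Gramian or through the state equation and the discrete one.
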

\begin{proof}
First, define the function $h(t)=\mathbf{1}_{(t,T]}-\frac{T-t}{T}$. The optimality conditions of the continuous and discrete problem lead to
\begin{multline*}
0=p_{1,i}(0)=\sum\nolimits_{l=1}^m c_l\int\nolimits_0^T\int\nolimits_\Omega L^* \left( L(g_l)\right)g_i~dx~dt\\
+\sum\nolimits_{l=1}^m\sum\nolimits_{j=1}^{m_l} c_j^{l}\int\nolimits_0^T\int\nolimits_\Omega L^* \left( L\left(h(t_{j,l})g_l\right)\right)g_i~dx~dt
+\int\nolimits_0^T\int\nolimits_\Omega L^* \left(Q(y_0,y_1)-y_d \right)g_i~dx~dt
\end{multline*}
as well as in the discrete case to
\begin{multline*}
0=p_{1,\vartheta,i}(0)=\sum\nolimits_{l=1}^m c_{l,\vartheta}\int\nolimits_0^T\int\nolimits_\Omega L_\vartheta^* \left( L_\vartheta(g_l)\right)g_i~dx~dt\\
+\sum\nolimits_{l=1}^m\sum\nolimits_{j=1}^{m_l} c_{j,\vartheta}^{l}\int\nolimits_0^T\int\nolimits_\Omega L_\vartheta^* \left( L_\vartheta\left(h(t_{j,\vartheta}^i)g_l\right)\right) g_i~dx~dt
+\int\nolimits_0^T\int\nolimits_\Omega L_\vartheta^* \left(Q_\vartheta(y_0,y_1)-y_d \right) g_i~dx~dt.
\end{multline*}
By taking the difference of the last two terms we get
\begin{multline}\label{dNumber}
\sum\nolimits_{l=1}^m(c_{l,\vartheta}-c_l)\int\nolimits_0^T\int\nolimits_\Omega L^* \left( L(g_l)\right)g_i~dx~dt=\sum\nolimits_{l=1}^m c_{l,\vartheta}\int\nolimits_0^T\int\nolimits_\Omega \left[L^* \left( L(g_l)\right) -  L_\vartheta^* \left( L_\vartheta(g_l)\right)\right]g_i~dx~dt\\
+\sum\nolimits_{l=1}^m\sum\nolimits_{j=1}^{m_l} (c_j^{l}-c_{j,\vartheta}^l)\int\nolimits_0^T\int\nolimits_\Omega L^* \left(L\left(h(t_{j,l})g_l\right)\right)g_i~dx~dt\\
+\sum\nolimits_{l=1}^m\sum\nolimits_{j=1}^{m_l} c_{j,\vartheta}^l\left(\int\nolimits_0^T\int\nolimits_\Omega \left[ L^* \left( L\left(h(t_{j,l})g_l\right)\right)-L_\vartheta^* \left( L_\vartheta\left(h(t_{j,\vartheta}^i)g_l\right)\right)\right]g_i~dx~dt\right)\\
+\int\nolimits_0^T\int\nolimits_\Omega \left[L^* \left(Q(y_0,y_1)-y_d \right)-L_\vartheta^* \left(Q_\vartheta(y_0,y_1)-y_d \right)\right]g_i~dx~dt.
\end{multline}
For the following we remark that $\bar u_\vartheta$ is bounded $BV(I)$, see Lemma \ref{boundedDiscreteoptiInBVandLInfty}. Then we consider the first term in \eqref{dNumber} on the righthand side. The regularity of $g_l$ implies that $L(g_l)\in C^1(\overline I;H^1_0(\Omega))$ according to Theorem \ref{energy}. Thus, with (\ref{AprioriEsti}), Lemma \ref{bddlinOpLtauh}, Corollary \ref{cor:Breg} and \ref{Cor221} it follows,
\begin{multline*}
\sum\nolimits_{l=1}^m c_{l,\vartheta}\int\nolimits_0^T\int\nolimits_\Omega \left[L^* \left( L(g_l)\right) -  L_\vartheta^* \left( L_\vartheta(g_l)\right)\right]g_i~dx~dt\leq\sum\nolimits_{l=1}^m c\| L^*(L(g_l))-L_\vartheta^*(L_\vartheta(g_l))\|_{L^2(\Omega_T)}\\
\leq c\sum\nolimits_{l=1}^m\left(\| L^*(L(g_l))-L_\vartheta^*(L(g_l))\|_{L^2(\Omega_T)}+\| L(g_l)-L_\vartheta(g_l)\|_{L^2(\Omega_T)}\right)=O(\tau^2 + h^2).
\end{multline*}
By Lemma \ref{LemmajumpHeight} we obtain:
\begin{multline}\label{bootstrapNeededLabel}
\sum\nolimits_{l=1}^m\sum\nolimits_{j=1}^{m_l} (c_j^{l}-c_{j,\vartheta}^l)\int\nolimits_0^T\int\nolimits_\Omega L^* \left(L\left(h(t_{j,l})g_l\right)\right)g_i~dx~dt\\
\leq c\sum\nolimits_{l=1}^m\sum\nolimits_{j=1}^{m_l} |c_j^{l}-c_{j,\vartheta}^l|\leq c\left(\tau^2+h^2+\|\overline y-\overline y_{\vartheta}\|_{L^2(\Omega_T)}\right)
\end{multline}
Now we consider the third term on the righthand side of \eqref{dNumber}. The stability of $L^\ast_\vartheta$, see Lemma \ref{bddlinOpLtauh}, imply
\begin{multline}\label{(75)}
\sum\nolimits_{l=1}^m\sum\nolimits_{j=1}^{m_l} c_{j,\vartheta}^l\left(\int\nolimits_0^T\int\nolimits_\Omega \left[ L^* \left( L\left(h(t_{j,l})g_l\right)\right)-L_\vartheta^*\left(L_\vartheta\left(h(t_{j,\vartheta}^l)g_l\right)\right)\right]g_i~dx~dt\right)\\
\leq c\sum\nolimits_{l=1}^m\sum\nolimits_{j=1}^{m_l} \left\|L^*\left(L\left(h(t_{j,l})g_l\right)\right)-L_\vartheta^* \left(L_\vartheta\left(h(t_{j,\vartheta}^l)g_l\right)\right)\right\|_{L^1(\Omega_T)}
\\
\leq c\sum\nolimits_{l=1}^m\sum\nolimits_{j=1}^{m_l} \left\|L^* \left( L\left(h(t_{j,l})g_l\right)\right)-L_\vartheta^* \left( L\left(h(t_{j,l}^j)g_l\right)\right)\right\|_{L^1(\Omega_T)} +
\left\|L\left(h(t_{j,l})g_l\right)-L_\vartheta\left(h(t_{j,\vartheta}^l)g_l\right)\right\|_{L^1(\Omega_T)}
\end{multline}
Again, by Theorem \ref{energy} we have $L(h(\tilde t)g_l)\in C^1(\overline I;H^1_0(\Omega))$ and any $\tilde t\in I$. Hence, with (\ref{AprioriEsti}) we get
\begin{equation*}
\sum\nolimits_{l=1}^m\sum\nolimits_{j=1}^{m_l} \|L^* \left(L\left(h(t_{j,l})g_l\right)\right)-L_\vartheta^*\left(L\left(h(t_{j,l})g_l\right)\right)\|_{L^1(\Omega_T)} =O(\tau^2 + h^2).
\end{equation*}
Next we consider the following inequality
\begin{multline}\label{dsdsfsfs}
\|L\left(h(t_{j,l}g_l)\right)-L_\vartheta\left(h(t_{j,\vartheta}^l)g_l\right)\|_{L^1(\Omega_T)}\\
\leq\|L\left(h(t_{j,l})g_l\right)-L_\vartheta\left(h(t_{j,\vartheta}^l)g_l\right)\|_{L^1(\Omega_T)}+\|L_\vartheta\left(h(t_{j,l})g_l\right)-L_\vartheta\left(h(t_{j,\vartheta}^l)g_l\right)\|_{L^1(\Omega_T)}.
\end{multline}
Due to Corollary \ref{cor:Breg} and \ref{Cor221}, the first term on the right hand side of (\ref{dsdsfsfs}) possess the asymptotic rate $O(\tau^2+h^2)$. By Lemma \ref{LtauhAbsch}, and Lemma \ref{First111Lemma} we obtain for the second term an estimate in terms of $c(\tau^2 +h^2 + \|\overline{y}-\overline{y}_{\vartheta}\|_{L^2(\Omega_T)})$.
Finally, we consider the last term in \eqref{dNumber}. We have
\begin{multline}\label{ineqality}
\int\nolimits_0^T\int\nolimits_\Omega \left[L^* \left(Q(y_0,y_1)-y_d \right)-L_\vartheta^* \left(Q_\vartheta(y_0,y_1)-y_d \right)\right]g_i~dx~dt\\
\leq c\|L^* \left(Q(y_0,y_1)-y_d \right)-L_\vartheta^* \left(Q(y_0,y_1)-y_d \right)\|_{L^2(\Omega_T)}\\
+c\|L_\vartheta^* \left(Q(y_0,y_1)-y_d \right)-L_\vartheta^* \left(Q_\vartheta(y_0,y_1)-y_d \right)\|_{L^2(\Omega_T)}.
\end{multline}
The first term converges in \eqref{ineqality} with a rate $(\tau^2+h^2)$ according to Theorem \ref{AprioriThm} since $Q(y_0,y_1)-y_d\in C^1(\overline I,H^1_0(\Omega))$. The prescribed regularity of $(y_0,y_1)$, Lemma \ref{LtauhAbsch} and the error estimates in (\ref{AprioriEsti}) give us an estimate in terms of order $(\tau^2 + h^2)$ of the last term in \eqref{ineqality}. Thus, we have
\begin{equation}\label{RateOrderG}
 \sum\nolimits_{l=1}^m(c_l-c_{l,\vartheta})\int\nolimits_0^T\int\nolimits_\Omega L^* \left( L(g_l)\right)g_i~dx~dt \leq c(\tau^2 + h^2+\|\overline{y}-\overline{y}_{\vartheta}\|_{L^2(\Omega_T)}).
\end{equation}
Next we recall the symmetric positive definiteness of the matrix $G$ from Lemma \ref{LemmaGMatrix}. It holds
\begin{equation*}
G(\overline{c}-\overline{c}_\vartheta)=\left(\sum\nolimits_{l=1}^m(c_l-c_{l,\vartheta})\int\nolimits_0^T\int\nolimits_\Omega L^* \left( L(g_l)\right)g_i~dx~dt\right)_{i=1}^m.
\end{equation*}
Furthermore, we have
\begin{equation}\label{GlueG}
\|G(\overline{c}-\overline{c}_\vartheta)\|_{\mathbb{R}^m}\geq \lambda_{min} \|\overline{c}-\overline{c}_\vartheta\|_{\mathbb{R}^m} \geq c\lambda_{min}\|\overline{c}-\overline{c}_\vartheta\|_\infty \geq c\lambda_{min}|c_i-c_{i,\vartheta}|
\end{equation}
for $i=1,\cdots,m$ where $\lambda_{min}>0$ is the smallest eigenvalue of $G$.
Using (\ref{GlueG}) and the convergence rates in (\ref{RateOrderG}) gives us (\ref{First112}).
\end{proof}
From now on we assume that all assumptions in Lemma \ref{constantsConvRate} hold.
\begin{corollary}\label{corRate}
It holds that
\[
\|\overline{u}-\overline u_\vartheta\|_{L^1(I)}\leq c(\tau^2+h^2+\|\overline y-\overline y_\vartheta\|_{L^2(\Omega_T)}).
\]
\end{corollary}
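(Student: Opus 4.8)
The plan is to assemble the estimate directly from the decomposition in Lemma~\ref{AssumptionDecompoL1Norm} together with the three componentwise bounds proved immediately above. Inequality \eqref{BootstrapIneq2} reduces $\|\overline u-\overline u_\vartheta\|_{L^1(I)^m}$ to a finite sum of three kinds of terms: the errors $|c_i-c_{i,\vartheta}|$ in the mean-value constants, the products $|c_j^i|\,|t_{j,i}-t_{j,\vartheta}^i|$ of the fixed continuous jump heights with the displacements of the jump locations, and the errors $|c_j^i-c_{j,\vartheta}^i|$ in the jump heights themselves. Since assumption (A1) forces only finitely many jumps, every sum appearing here runs over a fixed finite index set, so it suffices to bound each summand by $c(\tau^2+h^2+\|\overline y-\overline y_\vartheta\|_{L^2(\Omega_T)})$ and to absorb the finite number of contributions into a single constant.

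Concretely, I would first invoke Lemma~\ref{constantsConvRate}, i.e. \eqref{First112}, to control the constant errors $|c_i-c_{i,\vartheta}|$. For the mixed terms I would note that the continuous jump heights $c_j^i$ are fixed numbers independent of $\vartheta$, so each factor $|c_j^i|$ may be treated as a constant, and Lemma~\ref{First111Lemma}, i.e. \eqref{First111}, then bounds every location displacement $|t_{j,i}-t_{j,\vartheta}^i|$ at the required rate. Finally, Lemma~\ref{LemmajumpHeight}, i.e. \eqref{opti1}, bounds the jump-height errors $|c_j^i-c_{j,\vartheta}^i|$ by the same expression. Summing the finitely many terms and relabelling the constant yields the asserted inequality.

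There is no genuine analytic obstacle here; the corollary is essentially a bookkeeping assembly of results already established, the finiteness of the index set being the only structural input that keeps the constant independent of $\vartheta$. The point deserving attention is rather conceptual: all three ingredients share the same right-hand side $c(\tau^2+h^2+\|\overline y-\overline y_\vartheta\|_{L^2(\Omega_T)})$, which still carries the state error, so the corollary is not yet a closed bound but an inequality coupling the $L^1$-control error to the $L^2$-state error. This is by design. Fed back into Lemma~\ref{firstEqError} and combined with the preliminary state rate of Theorem~\ref{convStatesExactVsSemi}, it is precisely the estimate needed to launch the bootstrapping argument that will ultimately resolve $\|\overline y-\overline y_\vartheta\|_{L^2(\Omega_T)}$ and upgrade both errors to the optimal order $O(\tau^2+h^2)$.
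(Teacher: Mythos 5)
Your proof is correct and follows exactly the paper's route: the paper likewise deduces the corollary directly from Lemma~\ref{AssumptionDecompoL1Norm} combined with Lemmas~\ref{LemmajumpHeight}, \ref{First111Lemma}, and \ref{constantsConvRate}, summing the finitely many componentwise bounds. Your additional observation that the estimate still couples the control error to the state error and feeds the bootstrapping in Theorem~\ref{BootstrapLemma} accurately reflects how the paper uses it.
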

This corollary is a consequence of Lemma \ref{AssumptionDecompoL1Norm}, \ref{LemmajumpHeight}, \ref{First111Lemma}, \ref{constantsConvRate}.
Next we state the main result of this work.
\begin{theorem}\label{BootstrapLemma}
The following convergence rates hold.
\begin{align}\label{optimalRate1}
\|\overline{u}-\overline{u}_\vartheta\|_{L^1(I)^m} =O(\tau^2 +h^2), &\quad	\vert c_i-c_{i,\vartheta} \vert =O(\tau^2 +h^2),\\
					\vert t_{j,i}-t_{j,\vartheta}^i \vert=O(\tau^2 +h^2), &\quad\vert c_{j}^i-c_{j,\vartheta}^i \vert=O(\tau^2 +h^2)
\end{align}
with $j=1,\cdots,m_i$, $i=1,\cdots,m$.
Furthermore, we have for the optimal states of $(\tilde{P})$ and $(\tilde{P}_{\vartheta}^{\text{semi}})$
\begin{equation}\label{optimalRate2}
\|\overline{y}-\overline{y}_{\vartheta}\|_{L^2(\Omega_T)} =O(\tau^2 +h^2).
\end{equation}
\end{theorem}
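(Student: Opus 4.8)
The four scalar/measure estimates collected in \eqref{optimalRate1} reduce, through Lemmas \ref{LemmajumpHeight}, \ref{First111Lemma}, \ref{constantsConvRate} and Corollary \ref{corRate}, to a single quantity: each of $|c_i-c_{i,\vartheta}|$, $|t_{j,i}-t^i_{j,\vartheta}|$, $|c^i_j-c^i_{j,\vartheta}|$ and $\|\overline u-\overline u_\vartheta\|_{L^1(I)^m}$ is already bounded there by $c(\tau^2+h^2+\|\overline y-\overline y_\vartheta\|_{L^2(\Omega_T)})$. Hence the whole theorem follows once the optimal state estimate \eqref{optimalRate2} is in hand, after which one simply back-substitutes. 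So the plan is to concentrate all the effort on proving $\|\overline y-\overline y_\vartheta\|_{L^2(\Omega_T)}=O(\tau^2+h^2)$.

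To this end I would couple the state-error inequality \eqref{ineq1} of Lemma \ref{firstEqError} with Corollary \ref{corRate}. Writing $X:=\|\overline y-\overline y_\vartheta\|_{L^2(\Omega_T)}$, the two mixed terms on the right of \eqref{ineq1} are already of full order: by Corollary \ref{cor:Breg} together with the a priori estimates \eqref{Cor221} and \eqref{AprioriEsti} (applied to $L^\ast_\vartheta$ via Remark \ref{AdjDiscOp}) one has $\|\overline y-\hat y_\vartheta\|_{L^2(\Omega_T)}=O(\tau^2+h^2)$ and $\|\overline p-\hat p_\vartheta\|_{L^\infty(I;L^2(\Omega))}=O(\tau^2+h^2)$, so that $\|\overline p-\hat p_\vartheta\|_{L^\infty(I;L^2(\Omega))}^{1/2}=O(\tau+h)$. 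Feeding the bound $\|\overline u_\vartheta-\overline u\|_{L^1(I)^m}\le c(\tau^2+h^2+X)$ from Corollary \ref{corRate} into the remaining factor and using $\sqrt{a+b}\le\sqrt a+\sqrt b$ yields the self-referential inequality
\[
X\;\le\;c_1(\tau^2+h^2)\;+\;c_2(\tau+h)\,X^{1/2}.
\]

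The decisive feature is that $X$ enters the right-hand side only to the power $1/2$; this is what makes the bootstrapping work. I would close the estimate by reading the displayed bound as a quadratic inequality in $X^{1/2}$ (equivalently, by a Young-type absorption $c_2(\tau+h)X^{1/2}\le\tfrac12 X+\tfrac12 c_2^2(\tau+h)^2$ and moving $\tfrac12 X$ to the left). Either route gives $X^{1/2}\le C(\tau+h)$, that is \eqref{optimalRate2}. Alternatively one can iterate from the non-optimal rate \eqref{specificRate}: a rate $X=O((\tau+h)^{r})$ improves on each pass to $O((\tau+h)^{1+r/2})$, so the exponents $r_k=2-2^{-k}$ tend to $2$, and the absorption step is merely the clean way of reaching the limiting exponent in one stroke. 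Once \eqref{optimalRate2} holds, Lemmas \ref{LemmajumpHeight}, \ref{First111Lemma}, \ref{constantsConvRate} and Corollary \ref{corRate} (hence also Lemma \ref{AssumptionDecompoL1Norm}) immediately upgrade to $O(\tau^2+h^2)$, delivering all of \eqref{optimalRate1}.

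The main obstacle is precisely this circularity: the sharp control estimate requires the sharp state estimate and conversely, and a naive substitution only reproduces the first-order rate. The point to get right is that the feedback loop is \emph{contractive} rather than merely rate-preserving, because the control error enters with exponent $1/2$ (inherited from the square root in \eqref{ineq1}, which itself comes from the $L^1$–$L^\infty$ duality pairing of control against adjoint). The other place demanding care is to confirm that the mixed-state and mixed-adjoint errors genuinely attain the full second order, so that the constant $c_1$ multiplies $(\tau^2+h^2)$ and not some lower power; this is where the assumed regularity $y_d\in C^1(\overline I;H^1_0(\Omega))$, $g\in(\mathbb H^2)^m$ and $(y_0,y_1)\in\mathbb H^3\times\mathbb H^2$ is used.
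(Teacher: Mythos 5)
Your proposal is correct and takes essentially the same route as the paper's proof: both combine \eqref{ineq1} with Corollary \ref{corRate}, the a priori bounds for the mixed state and mixed adjoint (Corollary \ref{cor:Breg}, \eqref{Cor221}, \eqref{AprioriEsti}), and a Young-type absorption of the state error to close the self-referential inequality, then back-substitute into Lemmas \ref{LemmajumpHeight}, \ref{First111Lemma}, \ref{constantsConvRate} and Corollary \ref{corRate} for \eqref{optimalRate1}. The only cosmetic difference is the order of operations: the paper applies Young's inequality to the product $\|\overline{u}_{\vartheta}-\overline{u}\|^{1/2}\|\overline{p}-\hat{p}_{\vartheta}\|^{1/2}$ first and bounds $\|\overline{p}-\hat{p}_{\vartheta}\|_{C(\overline I;L^2(\Omega))}$ by $O(\tau^2+h^2)$ at the end, whereas you bound the adjoint factor first and absorb the resulting $(\tau+h)X^{1/2}$ term.
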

\begin{proof}
Using the inequality in (\ref{ineq1}), Corollary \ref{cor:Breg} and \ref{Cor221} and Corollary \ref{corRate}, we obtain for some $\epsilon>0$
\begin{multline*}
\|\overline{y}_{\vartheta}-\overline{y}\|_{L^2(\Omega_T)} \leq c\|\overline{y}-\hat{y}_{\vartheta}\|_{L^2(\Omega_T)}+c\|\overline{u}_{\vartheta}-\overline{u}\|_{L^1(I)^m}^{\frac{1}{2}}\|\overline{p} -\hat{p}_{\vartheta}\|_{C(I;L^2(\Omega))}^\frac{1}{2}\\
\leq c(\tau^2+h^2)+c\|\overline{u}_{\vartheta}-\overline{u}\|_{L^1(I)^m}^{\frac{1}{2}}\|\overline{p} -   \hat{p}_{\vartheta}\|_{C(I;L^2(\Omega))}^\frac{1}{2}\\
\leq c(\tau^2+h^2)+c\epsilon\|\overline{u}_{\vartheta}-\overline{u}\|_{L^1(I)^m} + \frac{c}{4 \epsilon} \|\overline{p} -\hat{p}_{\vartheta}\|_{C(I;L^2(\Omega))}\\
\leq c(\tau^2+h^2)+c\epsilon \|\overline{y}_{\vartheta}-\overline{y}\|_{L^2(\Omega_T)} + \frac{c}{4 \epsilon} \|\overline{p} - \hat{p}_{\vartheta}\|_{C(I;L^2(\Omega))}.
\end{multline*}
Consider a $\epsilon>0$ such that $c\epsilon=\frac{1}{2}$, then we have
\begin{equation*}
\|\overline{y}_{\vartheta}-\overline{y}\|_{L^2(\Omega_T)} \leq c(\tau^2+h^2+  \|\overline{p} -   \hat{p}_{\vartheta}\|_{C(I;L^2(\Omega))}).\quad
\end{equation*}
Then the a priori estimate (\ref{AprioriEsti}) implies
\begin{equation*}
\|\overline{p}-\hat{p}_{\vartheta}\|_{C(I;L^2(\Omega))}\leq
c(h^2 +\tau^2)\| \overline{y}-y_d\|_{C^1(I;H^1_0(\Omega))}.
\end{equation*}
 So we have  $\|\overline{y}_{\vartheta}-\overline{y}\|_{L^2(\Omega_T)}=O(\tau^2+h^2)$ and thus the same rate for the control in the $L^1(I)$-norm. Using the optimal rates of $\|\overline{y}_{\vartheta}-\overline{y}\|_{L^2(\Omega_T)}$ in (\ref{opti1}), (\ref{First111}), and (\ref{First112})
implies the optimal quadratic convergence rates for $c_{i,\vartheta}$, $t_{j,\vartheta}^i$ and $c_{j,\vartheta}^i$. 
\end{proof}
\begin{corollary}
For the BV representations of the optimal controls of $(\tilde{P})$ and $(\tilde{P}_{\vartheta}^{\text{semi}})$ hold
\begin{equation*}
\left\vert \|D_t\overline{u}\|_{M(I)}-\|D_t\overline{u}_{\vartheta}\|_{M(I)}\right\vert =O\left(
 \tau^2+h^2\right).
\end{equation*}
 Furthermore, $\overline{u}_{\vartheta}$ converges strictly in $BV(0,T)$ to $\overline{u}$ for $\vartheta \rightarrow 0$ with the convergence rate $O(\tau^2+h^2)$.
\end{corollary}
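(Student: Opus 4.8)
The plan is to reduce everything to the quadratic rates already established in Theorem~\ref{BootstrapLemma}, exploiting the explicit atomic structure of the derivatives. First I would write down the total variation norms explicitly. By Definition~\ref{Ineq51RHS} and Lemma~\ref{coalaLemma} the derivatives of the BV representations are the purely atomic measures $D_t\overline{u}_i=\overline{v}_i=\sum_{j=1}^{m_i}c_j^i\delta_{t_{j,i}}$ and $D_t\overline{u}_{\vartheta,i}=\overline{v}_{\vartheta,i}=\sum_{j=1}^{m_i}c_{j,\vartheta}^i\delta_{t_{j,\vartheta}^i}$. Since the jump points $t_{j,i}$ are pairwise distinct by (A1), and the discrete points $t_{j,\vartheta}^i$ lie in pairwise disjoint balls $B_\delta(t_{j,i})$ by Lemma~\ref{coalaLemma} and are therefore also pairwise distinct, the total variation norms reduce to the $\ell^1$-sums of the coefficients,
\[
\|D_t\overline{u}_i\|_{M(I)}=\sum\nolimits_{j=1}^{m_i}|c_j^i|,\qquad \|D_t\overline{u}_{\vartheta,i}\|_{M(I)}=\sum\nolimits_{j=1}^{m_i}|c_{j,\vartheta}^i|.
\]

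Second, I would estimate the difference of the norms by the reverse triangle inequality. Summing over $i=1,\ldots,m$ and using $\big||c_j^i|-|c_{j,\vartheta}^i|\big|\leq|c_j^i-c_{j,\vartheta}^i|$ gives
\[
\left\vert\|D_t\overline{u}\|_{M(I)}-\|D_t\overline{u}_{\vartheta}\|_{M(I)}\right\vert\leq\sum\nolimits_{i=1}^m\sum\nolimits_{j=1}^{m_i}|c_j^i-c_{j,\vartheta}^i|.
\]
Since the index sets are finite and fixed, the quadratic bound $|c_j^i-c_{j,\vartheta}^i|=O(\tau^2+h^2)$ from Theorem~\ref{BootstrapLemma} immediately yields the first assertion.

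For the strict convergence with rate I would recall that strict convergence of $\overline{u}_\vartheta$ to $\overline{u}$ in $BV(0,T)^m$ amounts to convergence in $L^1(I)^m$ together with convergence of the total variations of the derivatives. The $L^1$-rate $\|\overline{u}-\overline{u}_\vartheta\|_{L^1(I)^m}=O(\tau^2+h^2)$ is part of Theorem~\ref{BootstrapLemma} (see \eqref{optimalRate1}), and the convergence of the total variations at the same rate was just established. Combining the two gives strict convergence with rate $O(\tau^2+h^2)$.

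I do not expect a genuine obstacle: the statement is essentially a repackaging of Theorem~\ref{BootstrapLemma}. The only point demanding a little care is the identification of the total variation norm of the atomic measures $\overline{v}_i$, $\overline{v}_{\vartheta,i}$ with the $\ell^1$-sum of the jump heights, which hinges on the atoms being distinct; this is guaranteed for the continuous control by (A1) and for the discrete control by the disjoint-ball localisation in Lemma~\ref{coalaLemma}. Once that is in place, the reverse triangle inequality and the finite cardinality of the jump sets do the rest.
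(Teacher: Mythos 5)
Your proof is correct. The paper itself disposes of this corollary in a single line, citing the cost rate \eqref{vffs}, the state rate \eqref{optimalRate2} and Theorem \ref{BootstrapLemma}; the natural reading of that citation is that the total-variation estimate is meant to be extracted from the identity $\sum_{i}\alpha_i\|\overline v_i\|_{M(I)}=J(\overline v,\overline c)-\tfrac12\|\overline y-y_d\|_{L^2(\Omega_T)}^2$ and its discrete counterpart, so that the weighted sum $\sum_i \alpha_i\bigl(\|\overline v_i\|_{M(I)}-\|\overline v_{\vartheta,i}\|_{M(I)}\bigr)$ is the difference of the cost error and the tracking-term error, both $O(\tau^2+h^2)$. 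You instead use only Theorem \ref{BootstrapLemma}: the atomic structure of $D_t\overline u_i$ and $D_t\overline u_{\vartheta,i}$ (Lemma \ref{coalaLemma}, Definition \ref{Ineq51RHS}) reduces each total variation to an $\ell^1$-sum of jump heights, and the reverse triangle inequality together with the quadratic rates for $\vert c_j^i-c_{j,\vartheta}^i\vert$ finishes the job. Your route is in fact the more robust one: the cost-based argument only controls a \emph{signed}, $\alpha_i$-weighted sum, which for $m>1$ with distinct weights $\alpha_i$ does not by itself bound $\bigl\vert\|D_t\overline u\|_{M(I)}-\|D_t\overline u_\vartheta\|_{M(I)}\bigr\vert$ (cancellations between components are possible), so it must in any case be supplemented by the componentwise bounds you use; conversely, the cost route needs no structural information about the measures, while yours leans on the atomic form guaranteed by (A1) and Lemma \ref{coalaLemma}. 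The point you flagged as delicate — that the $M(I)$-norm of the atomic measures equals the $\ell^1$-sum of the coefficients — is indeed exactly what the distinctness of the $t_{j,i}$ and the pairwise disjoint balls $B_\delta(t_{j,i})$ provide. The strict-convergence claim is handled identically in both arguments, by combining the total-variation rate with the $L^1$ rate from \eqref{optimalRate1}.
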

\begin{proof}
The statements are a consequence of (\ref{vffs}), (\ref{optimalRate2}), and Theorem \ref{BootstrapLemma}.
\end{proof}
\begin{remark}
	Based on the same techniques we have used so far, the following convergence rates can be shown for less regular data $(g_i)_{i=1}^m\subset L^\infty(\Omega)$ and $(y_d,y_0,y_1)\in C^1(\bar I;L^2(\Omega))\times H^1_0(\Omega) \times L^2(\Omega)$ as assumed in the problem $(P)$ at the beginning:
	\begin{itemize}
		\item[a)] $\|\bar y - \bar y_\vartheta\|_{L^2(\Omega_T)}= O(\tau^{2/3}+h^{2/3})$,
		\item[b)] $\vert J(\bar v,\bar c)-J_\vartheta(\bar v_\vartheta,\bar c_\vartheta)\vert = O(\tau^{2/3}+h^{2/3})$,
		\item[c)] $\|\bar u - \bar u_\vartheta\|_{L^1(I)^m}+\sum_{i=1}^m\left\vert\| D_t \bar u_i\|_{M(I)} - \|D_t \bar u_{\vartheta,i}\|_{M(I)} \right\vert= O(\tau^{2/3}+h^{2/3})$.
	\end{itemize}
In particular, statements of Theorem \ref{AprioriThm} has to be extended by \cite[Theorem 4.1., 4.3.]{[Zlot]} with respect to lower regular data chosen for $(g_i)_{i=1}^m$ and $(y_d,y_0,y_1)$.
\end{remark}
\section{Numerical Experiments}\label{sec:numerics}
In order to numerically verify the previously presented optimal error rates, an appropriate algorithm is of particular importance due to the variational discretization of problem $(P)$. Similarly as in \cite{[HMNV19]}, we solve the $BV$-control problem using Algorithm \ref{Algo1}, which is a modified version of the primal dual active point (PDAP) algorithm introduced in \cite[Algorithm 2]{PW19}. This method is based on a conditional gradient method, see \cite{BrediesPikkarainen:2013}. The algorithm calculates the derivative $\bar v\in M(I)^m$ and the offset $\bar c\in \mathbb R^m$ of the optimal control $\bar u$. The iterates for $v$ are given by a linear combination of Dirac measures $\left(\sum_{l=1}^{m_i}\lambda_{l,i}\delta_{t_{l,i}}\right)_{i=1}^m$. In every iteration the positions of the $m$ global maxima of
\[
p_{1,i}^k\colon t\mapsto \left\vert \int\nolimits_t^T \int\nolimits_{\Omega} L^*_{\vartheta}(S_\vartheta(v_k,c_k)-y_d)g_i ~dx~ds\right\vert,\quad i=1,\ldots,m
\]
are found. Then new Dirac measures are added at these positions. Finally a non-smooth optimization problem in terms of the magnitudes $\lambda$ and the constants $c$ is solved. The $L^1$-norm in the corresponding cost functional enhances sparsity in the vector of the magnitudes. If a magnitude is set to zero, the corresponding Dirac measure is erased from the current iterate.
For a convenient notation we define the map $\mathcal{U}_{\mathcal{A}}(\lambda):=\sum_{\ell\in \mathcal{A}}\lambda_\ell \delta_{\ell}$ for any finite set $\mathcal{A}\subset I$ and $\lambda\in \mathbb{R}^{\vert\mathcal{A} \vert}$. 
\begin{algorithm}
	Input: For $i=1,\cdots,m$ define $A_{0,i}\subset I$ with $\vert A_{0,i}\vert<\infty$, $\lambda_{0,i}\in \mathbb{R}^{\vert A_{0,i} \vert}$,
	$v_{0}:=(v_{0,i})_{i=1}^{m}:=\left(\mathcal{U}_{A_{0,i}}(\lambda_{0,i})\right)_{i=1}^{m}
	\in M(I)^m$, $c_0\in \mathbb{R}^m$, and $k=0$:\\
		Iterate:\\
		1. $\hat t_i=\argmax_{t\in\overline I}p_{1,i}^k=\left\vert \int\nolimits_t^T \int\nolimits_{\Omega} L^*_{\vartheta}(S_\vartheta(v_k,c_k)-y_d)g_i ~dx~ds\right\vert$ $\text{ for }i=1,\cdots,m$.\\
		2. Set $A_{k,i}:=\supp(v_{k,i})\cup\lbrace\hat t_i \rbrace $, for $i=1,\cdots,m$, $A_k=\bigcup_{i=1}^mA_{k,i}$ and compute
        \begin{equation*}
		(\overline \lambda,\overline c)=\argmin\limits_{
			\lambda\in \mathbb{R}^{\vert A_k\vert},~c\in \mathbb{R}^m}
		\frac{1}{2}\left\|S_{\vartheta}\left(
		\left(
		\mathcal{U}_{A_{k,i}}(\lambda_i)
		\right)_{i=1}^m,c\right)-y_d\right\|_{L^2(\Omega_T)}^2+\sum\nolimits_{i=1}^m\alpha_i \|\mathcal{U}_{A_{k,i}}(\lambda_{i})\|_{M(0,T)}.
		\end{equation*}
		\\
		3. Define $v_{k+1}=\left(\mathcal{U}_{A_{k,i}} (\overline \lambda_i) \right)_{i=1}^m$
		, $c_{k+1}=\overline c$; set $k=k+1$ and return to 1.
	\caption{BV-PDAP Algorithm}\label{Algo1}
\end{algorithm}
In the pure measure-valued case ($B(v,c)=v$) it is proven that this algorithm converges with a sublinear rate in terms of the cost functional. Under additional assumptions on the problem a linear rate is proven, see \cite{PW19}.
We consider a specific configuration of the input data for $(P)$, such that the solution is known explicitly and is a piecewise constant $BV-$function with finitely many jumps. We use a construction procedure for the solution which can be found in \cite{KuIch}. We fix the following scenario:
\begin{itemize}
\item $\Omega:=(-1,1)^2$ and $T=2$
\item $\alpha=6\cdot 10^{-3}$
\item $g(x):=\cos\left(\frac{\pi}{2} x_1\right)\cos\left(\frac{\pi}{2} x_2\right)$, thus $g\in \mathbb H^2$
\item $(y_0,y_1)=(0,0)$
\item $y_d=S(\bar v,\bar c)-(\partial_{tt}-\Delta)\varphi\in C^1(\bar I,H^1_0(\Omega))$ with
$\varphi(t,x):=\frac{3\pi\alpha}{2}\sin(2\pi t)\sin(\pi t)\cos\left(\frac{\pi}{2}x_1\right)\cos\left(\frac{\pi}{2}x_2\right)$.
\end{itemize}
Thus the data has the required regularity which we assumed in the proof of the quadratic convergence rates. Based on this input data the derivative $\bar v$ of the optimal control $\bar u$ has the form
\begin{equation}\label{Numbs1}
	\overline v=\delta_{1/3}-\delta_{1}+\delta_{5/3}\\
\end{equation}
and $\bar c=0$. Moreover this example fulfills Assumption \ref{as:jumps} since $p_{1,1}$ has the form
\(
p_{1,1}(t) =\alpha\sin\left(3\pi t/2\right)^3
\).
The function $y_d$ contains the optimal state $S(\bar v,\bar c)$, which cannot be obtained exactly. Thus we approximate it by its finite element solution on the finest grid level. More precisely, we set for the reference state $S(\bar v,\bar c)$ the mesh fineness in time and space to $\tau=2^{-9}$ and $h=2\cdot \sqrt{2}\cdot 2^{-8}$, respectively. All spatial integrals used for the numerical computation of $S(\bar v,\bar c)$ are calculated by a seven-point Gaussian quadrature rule different to the numerical calculations used for the approximated optimal state with respect to the BV-PDAP algorithm, where we considered a three-point Gauss quadrature rule. The function $p_{1,1}^k$ is computed exactly. Since they are piecewise quadratic in time, their global maxima can be attained at arbitrary points in $\bar I$, not necessarily at grid points. Thus $B(v^k,c^k)$ can have jumps outside of grid points. However, the integrals $\int_0^TB(v^k,c^k)e_i~dt$ with a hat function $e_i$ in the discrete state equation are calculated exactly. Moreover, the candidates for the global maxima of $p_{1,1}^k$ are also computed explicitly using the fact that $\partial_t p_{1,1}^k$ is piecewise linear. An additional $L^2$-regularization is added to the non-smooth optimization problem for the magnitudes $\lambda$ and the constants $c$. Then it is solved by a continuation strategy and a semi smooth Newton method, see also \cite{KuIch}.
\begin{figure}
	\begin{center}
		\includegraphics[width=13.2cm]{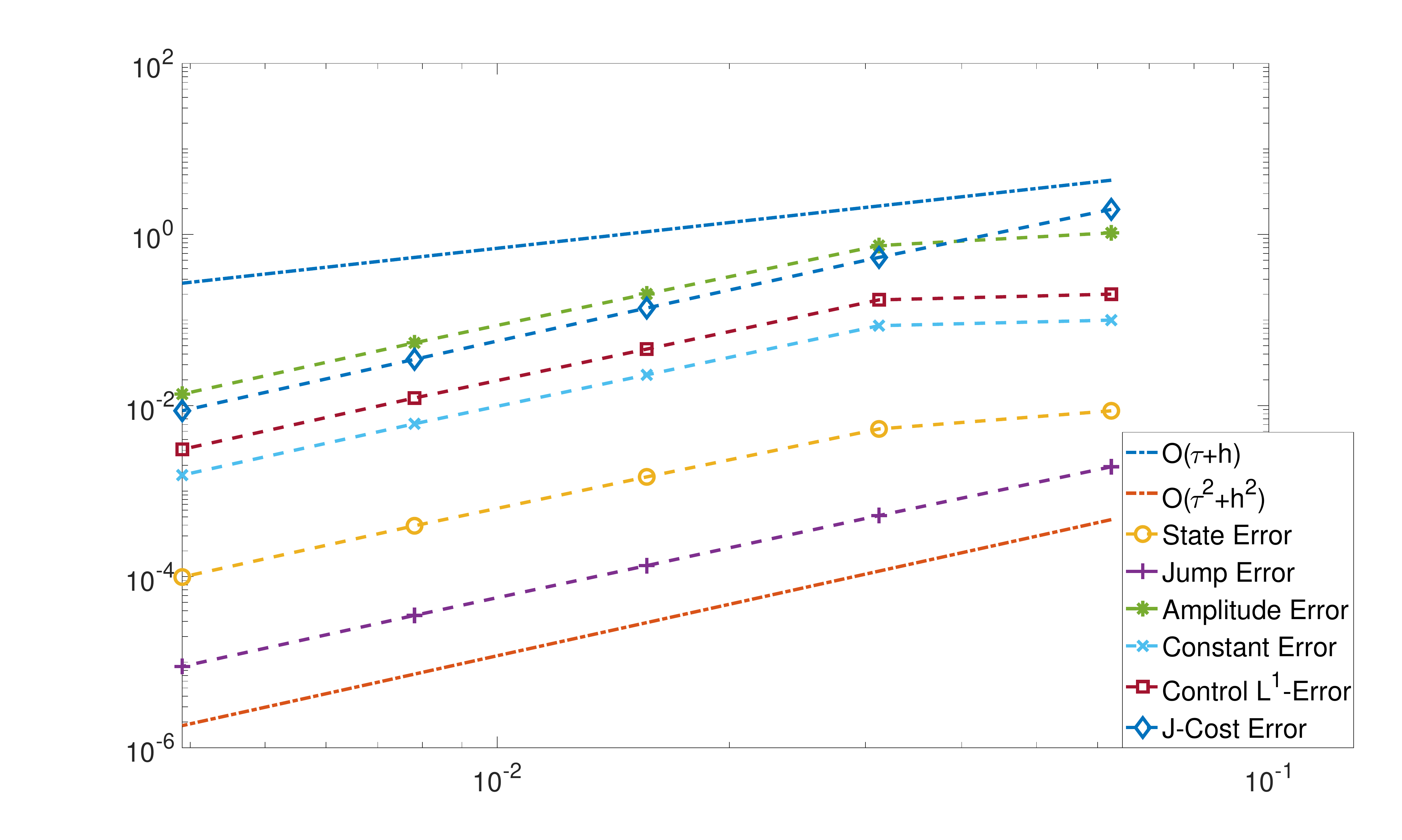}
	\end{center}
	\caption{In the legend we see the following error rates: "State Error" stands for the $\|S_{\tau,h}(\bar v_{\tau,h},\bar c_{\tau,h})-S(\bar v, \bar c)\|$ error. The "Jump Error", "Amplitude Error", and "Constant Error" are the error rates we defined in Theorem \ref{BootstrapLemma}. The line corresponding to "Control $L^1$-Error" is the $\|\bar u -\bar u_{\tau,h}\|_{L^1(I)}$ error and "J-Cost Error" stands for $\vert \tilde J(\bar v ,\bar c)-\tilde J(\bar v_{\tau,h},\bar c_{\tau,h})\vert$.	The discretization parameters are chosen as $\tau=2^{-k}$ and $h=2\cdot \sqrt{2} \cdot 2^{-k}$ with $k=4,5,\cdots,8$ and the grids are refined simultaneously.}\label{fig2}
\end{figure}
In Figure \ref{fig2} we observe that the error in state variable measured in the $L^2(\Omega_T)$-norm, in the control variable measured in the $L^1(I)$-norm as well as in the cost functional converge like $O(\tau^2+h^2)$. The same is true for the error in the jump positions $t_{j,1}$, the magnitudes of the jumps $c_j^1$ and the offset $c_1$. Thus the predicted error rates are verified.

\section*{Acknowledgments}
Sebastian Engel and Philip Trautmann were supported by the
International Research Training Group IGDK, funded by the German Science Foundation (DFG) and the Austrian Science Fund (FWF).

\pagebreak
\FloatBarrier



\end{document}